\pgfplotsset{compat=newest}
\providecommand{\customgenericname}{}
\newcommand{\newcustomtheorem}[2]{%
  \newenvironment{#1}[1]
  {%
   \renewcommand\customgenericname{#2}%
   \renewcommand\theinnercustomgeneric{##1}%
   \innercustomgeneric
  }
  {\endinnercustomgeneric}
}
\theoremstyle{plain}
\newtheorem{thm}{Theorem}[section]
\newtheorem{lem}[thm]{Lemma}
\newtheorem{prop}[thm]{Proposition}
\newtheorem{cor}[thm]{Corollary}
\newtheorem{thm-def}[thm]{Theorem-Definition}
\theoremstyle{definition}
\newtheorem{defn}[thm]{Definition}
\newtheorem{cor-def}[thm]{Corollary-Definition}
\newtheorem{example}[thm]{Example}
\theoremstyle{remark}
\newtheorem{rem}[thm]{Remark}
\numberwithin{equation}{section}
\renewcommand{\paragraph}{%
  \@startsection{paragraph}{4}%
  {\z@}{1.25ex \@plus 1ex \@minus .2ex}{-1em}%
  {\normalfont\normalsize\bfseries}%
}
\newcommand{\gl}{\mathrm{GL}}
\newcommand{\ggl}{\mathfrak{gl}}
\newcommand{\cf}{\mathcal{F}}
\newcommand{\co}{\mathcal{O}}
\newcommand{\cp}{\mathcal{P}}
\newcommand{\cn}{\mathcal{N}}
\newcommand{\ch}{\mathcal{H}}
\newcommand{\ce}{\mathcal{E}}
\newcommand{\xx}{\mathcal{X}}
\newcommand{\ka}{\mathfrak{a}}
\newcommand{\kg}{\mathfrak{g}}
\newcommand{\km}{\mathfrak{m}}
\newcommand{\bn}{\mathbf{N}}
\newcommand{\bz}{\mathbf{Z}}
\newcommand{\br}{\mathbf{R}}
\newcommand{\bp}{\mathbf{P}}
\newcommand{\bq}{\mathbf{Q}}
\newcommand{\bc}{\mathbf{C}}
\newcommand{\ba}{\mathbf{A}}
\newcommand{\rd}{\mathrm{d}}
\newcommand{\rtt}{\mathrm{T}}
\newcommand{\rS}{\mathrm{S}}
\newcommand{\val}{\mathrm{val}}
\newcommand{\Ad}{\mathrm{Ad}}
\newcommand{\codim}{\mathrm{Codim}}
\newcommand{\diag}{\mathrm{diag}}
\newcommand{\gal}{\mathrm{Gal}}
\newcommand{\Id}{\mathrm{Id}}
\newcommand{\reg}{\mathrm{reg}}
\newcommand{\spec}{\mathrm{Spec}}
\newcommand{\pic}{\mathrm{Pic}}
\newcommand{\can}{\mathrm{can}}
\newcommand{\var}{\mathrm{var}}
\newcommand{\spf}{\mathrm{Spf}}
\newcommand{\pr}{\mathrm{pr}}
\newcommand{\Hom}{\mathrm{Hom}}
\newcommand{\gr}{\mathrm{Gr}}
\newcommand{\rk}{\mathrm{rk}}
\newcommand{\ep}{\epsilon}
\newcommand{\varep}{\varepsilon}
\newcommand{\sm}{\mathrm{sm}}
\newcommand{\defm}{\mathrm{Def}}
\newcommand{\jac}{\mathrm{Jac}}
\newcommand{\ql}{\mathbf{Q}_{\ell}}
\newcommand{\cm}{{\mathcal{M}}}
\newcommand{\ct}{\mathcal{T}}
\newcommand{\cc}{\mathcal{C}}
\def\cb{{\mathcal B}}
\def\ka{{\mathfrak{a}}}
\def\kg{{\mathfrak{g}}}
\def\km{{\mathfrak{m}}}
\def\kbb{{\mathfrak{B}}}
\def\kaa{{\mathfrak{A}}}
\newcommand{\pair}[1]{\langle #1\rangle}
\newcommand{\bsm}{\begin{smallmatrix}}
\newcommand{\esm}{\end{smallmatrix}}
\def\cqfd{\hfill $\Box$}
\def\<{\langle\,}
\def\>{\,\rangle}
\def\kg{\mathfrak g}
\def\<{\langle}
\def\>{\rangle}
\def\={\equiv}
\def\le{\leqslant}
\def\ge{\geqslant}
\def\Hom{{\rm Hom}}
\def\rd{{\rm d}}
\def\val{{\rm val}}
\def\gal{{\rm Gal}}
\def\bl{{\rm Bl}}
\def\Id{{\rm Id}}
\author{Zongbin \textsc{Chen}}
\title{\textbf{On the dependence of the affine Springer fibers on their root valuation datum}}
\begin{document}
\maketitle

\begin{abstract}

For the group $\mathrm{GL}_{d}$, we confirm a conjecture of Goresky, Kottwitz and MacPherson, which states that the cohomology of the affine Springer fibers depend only on the root valuation datum of their defining elements.
The proof relies on a microlocal analysis of the intermediate extensions appearing in Ng\^o's support theorem, and a Whitney regularity property of the union of the strict $\delta$-strata over the equisingular strata for the miniversal deformation of plane curve singularities. 

\end{abstract}

\tableofcontents

\section{Introduction}

Let $F=\bc(\!(\varep)\!)$ be the field of Laurent series over $\bc$, $\co=\bc[\![\varep]\!]$ its ring of integers.
We fix an algebraic closure $\overline{F}=\bigcup_{n\in \bn}\bc(\!(\varep^{1/n})\!)$ of $F$, let $\tau\in \gal(\overline{F}/F)$ be the unique element that sends $\varep^{1/n}$ to $\zeta_{n}\varep^{1/n}$ for any $n\in \bn$, where $\{\zeta_{n}\}_{n\in \bn}$ is a compatible family of primitive $n$-th root of unity. Let $\val:\overline{F}\to \bq$ be the valuation normalized by the condition $\val(\varep)=1$.

Let $G$ be the general linear group $\gl_{d}$ over $\bc$, and let $\kg$ be its Lie algebra. 
Let $\gamma\in \kg[\![\varep]\!]$ be a regular semisimple element. The \emph{affine Springer fiber} at $\gamma$ is the closed sub ind-$\bc$-scheme of the \emph{affine grassmannian} $\xx=G(\!(\varep)\!)/G[\![\varep]\!]$ defined by
$$
\xx_{\gamma}=\Big\{g\in \xx\,\big|\,\Ad(g^{-1})\gamma \in \kg[\![\varep]\!]\Big\}.
$$ 
It is a scheme locally of finite type over $\bc$.
Their analogues over finite fields have been used to study the orbital integrals \cite{gkm homo}. 
Let $A$ be the maximal torus of $G$ consisting of the diagonal matrices, let $\Phi$ be the root system of $G$ with respect to $A$, let $W$ be the associated Weyl group. Goresky, Kottwitz and MacPherson \cite{gkm root} have defined the \emph{root valuation datum} of $\gamma$: Choose an element $\gamma'\in \ka(\overline{F})$ conjugate to $\gamma$, we can attach to $\gamma'$ a pair $(w,r)$, where $w\in W$ is the unique element such that $w\tau(\gamma')=\gamma'$, and $r:\Phi\to \bq$ is the function sending $\alpha\in \Phi$ to $\val(\alpha(\gamma'))$. The element $\gamma'$ is not unique, we can replace it with any element of the form $w'\gamma'$, $w'\in W$, for which the pair $(w,r)$ is replaced by $(w'w{w'}^{-1}, w'r)$, where $w'r(\alpha)=r({w'}^{-1}(\alpha))$.   
The equivalence class of the pair $(w,r)$ under the above action is called the \emph{root valuation datum} of $\gamma$.

\begin{customthm}{1}[Conjecture of Goresky-Kottwitz-MacPherson, \cite{gkm root}]\label{main}

The cohomology of the affine Springer fiber $\xx_{\gamma}$ depends only on the root valuation datum of the  element $\gamma$. 

\end{customthm}

The theorem is proven via deformation of the compactified Jacobians. 
Recall that $\xx_{\gamma}\cong \xx_{\gamma}^{0}\times \bz$ with $\xx_{\gamma}^{0}$ being  the central connected component of $\xx_{\gamma}$, and $\xx_{\gamma}^{0}$ admits the action of a discrete free abelian group $\Lambda^{0}$. 
According to Laumon \cite{laumon springer}, the quotient $\Lambda^{0}\backslash \xx_{\gamma}^{0}$ is homeomorphic to the compactified Jacobian $\overline{P}_{C_{\gamma}}$ of a spectral curve $C_{\gamma}$ over $\bc$, and $\xx_{\gamma}^{0}$ is homeomorphic to a $\Lambda^{0}$-torsor $\overline{P}{}_{C_{\gamma}}^{\natural}$ over $\overline{P}_{C_{\gamma}}$, which can be seen as a universal abelian covering of $\overline{P}_{C_{\gamma}}$.
Moreover, the construction of $\overline{P}{}_{C_{\gamma}}^{\natural}$ can be put in family, and theorem \ref{main} can be reduced to:

\begin{customthm}{2}\label{main CJ}

The cohomology of $\overline{P}_{C_{\gamma}}$ depends only on the root valuation datum of the element $\gamma$. 

\end{customthm}

In terms of plane curve singularities, the root valuation datum of $\gamma$ determines the datum consisting of the Puiseux characteristics of the irreducible components (called \emph{branches})  of the singularity $\spf(\co[\gamma])$ and the intersection numbers between pairs of the branches. According to Zariski \cite{zariski equisingularity}, plane curve singularities with the same such datum will be \emph{equisingular}. Hence it is enough to show that the cohomology of $\overline{P}_{C_{\gamma}}$ is locally constant under equisingular deformation of $C_{\gamma}$.

Let $\pi:\cc\to \cb$ be an algebraization of a miniversal deformation of $C_{\gamma}$, with $C_{\gamma}$ lying over $0\in \cb$. 
It is known that $\cb$ is smooth over $\bc$ and we can even take $\cb=\ba^{\tau_{\gamma}}$, where $\tau_{\gamma}$ is the Tjurina number of the singularity $\spf(\co[\gamma])$.
Then $\overline{P}_{C_{\gamma}}$ fits in the projective flat family of relative compactified Jacobians 
$$
f:\overline{\cp}=\overline{\pic}{}^{\,0}_{\cc/ \cb}\to \cb.
$$ 
Let $j:\cb^{\rm sm}\to \cb$ be the open subscheme over which $f$ is smooth, and let $f^{\rm sm}$ be the restriction of $f$ to the inverse image of $\cb^{\rm sm}$. Ng\^o's support theorem \cite{ngo} states that
\begin{equation*}
Rf_{*}\ql=\bigoplus_{i=0}^{2\delta_{\gamma}} j_{!*}(R^{\,i}f^{\sm}_{*}\ql)[-i],
\end{equation*}
where $\delta_{\gamma}$ is the $\delta$-invariant of $C_{\gamma}$.
In particular,
\begin{equation*}
H^{i}\big(\overline{P}_{C_{\gamma}}, \ql\big)=\bigoplus_{i'=0}^{i}\ch^{i-i'}\big(j_{!*}R^{i'}f^{\sm}_{*}\ql\big)_{0},\quad i=0,\cdots,2\delta_{\gamma}.
\end{equation*}
Consider the partition of the base $\cb$ as union of locally closed subschemes such that fibers of the family $\pi:\cc\to \cb$ are equisingular over each subscheme, we call it the \emph{equisingular stratification}, and call the resulting strata \emph{equisingular strata}\footnote{Here the word \emph{stratification} is used in an abusive manner, it seems unknown whether the equisingular stratification satisfies the frontier condition for a stratification.}.  
To prove theorem \ref{main CJ}, we need to show that the sheaves $\ch^{*}(j_{!*}R^{\,i}f^{\sm}_{*}\ql)$ are locally constant over the equisingular strata for all $i$. 
If the equisingular stratification was a Whitney stratification, theorem \ref{main CJ} will be an immediate consequence of Ng\^o's support theorem, as the intermediate extension is locally constant along the Whitney strata. 
But this is too much to ask, there is a counter-example of Fr\'ed\'erique Pham (cf. \cite{bert pham}), which shows that the equisingular stratification is not fine enough to be a Whitney stratification.

Here some miracle happens. The local regularity criteria of Fantechi-G\"ottsche-van Straten \cite{fgv} about the family $f:\overline{\cp}\to \cb$ implies a local acyclicity property of the complex $j_{!*}R^{\,i}f^{\sm}_{*}\ql$ (theorem \ref{local acyclicity}), to the effect that it is enough to show that the union of the \emph{strict $\delta$-strata} are Whitney regular over the equisingular strata which lie at their frontier. Here the \emph{strict $\delta$-strata} refer to the equisingular strata over which the curves have $\delta$ ordinary double points as singularities.   
With Teissier's criteria \cite{teissier polar 2}, the Whitney regularity property in question follows from the smoothness of the equisingular strata and a wonderful property of the discriminant $\Delta$ of the family $\pi$, that its Nash blow-up is smooth \cite{teissier hunting}.

\subsection*{Notations, conventions and useful facts}

\paragraph{(1)}

Let $X$ be a locally noetherian scheme, we denote by $X^{\reg}$ its regular locus and $X^{\rm sing}=X-X^{\reg}$ the singular locus.

\paragraph{(2)}

Let $f:X\to S$ be a morphism of schemes, we will denote by $\Delta_{f}$ the discriminant locus of $f$, and by $S^{\sm}$ the open subscheme $S-\Delta_{f}$ over which $f$ is smooth, and $f^{\sm}:X^{\rm sm}\to S^{\sm}$  the  restriction of $f$ to the inverse image of $S^{\sm}$. For any point $s\in S$, we denote by $X_{s}$ the fiber of $f$ at $s$.

\paragraph{(3)} 

Let $C^{\circ}=\spf(\bc[\![x,y]\!]/(f(x,y)))$ be a germ of plane curve singularity. The \emph{Milnor number} and \emph{Tjurina number} of $C^{\circ}$ are defined respectively by
$$
\mu=\dim(\bc[\![x,y]\!]/(\partial_{x}f, \partial_{y}f)), \quad \tau=\dim(\bc[\![x,y]\!]/(f, \partial_{x}f, \partial_{y}f)).
$$
Let $A=\bc[\![x,y]\!]/(f(x,y))$ and $\widetilde{A}$ the normalization of $A$, the \emph{conductor} of $A$ is the ideal $$\ka=\big\{a\in A\mid a\widetilde{A}\subset A\big\}.$$ It is known that $\dim(\widetilde{A}/A)=\dim(A/\ka)$, we call it the \emph{$\delta$-invariant} of $C^{\circ}$, denoted $\delta(C^{\circ})$ or $\delta(A)$.

\paragraph{(4)} Let $C^{\circ}$ be a germ of plane curve singularity over $\bc$. Its irreducible components are called the \emph{branches}.  
It is known that every branch admits a \emph{Puiseux parametrization}
$$
\begin{cases}
X=t^{n},&\\
Y=\sum_{i=m}^{\infty} a_{i}t_{i}, &\text{for some }m>n.
\end{cases}
$$ 

\paragraph{(5)}

Let $C^{\circ}=\spf(A)$ be a germ of plane curve singularity over $\bc$. We denote by $\defm^{\rm top}_{C^{\circ}}$ or $\defm^{\rm top}_{A}$ the deformation functor of $C^{\circ}$. The functor $\defm^{\rm top}_{C^{\circ}}$ is known to be representable and we have a miniversal deformation of $C^{\circ}$. 

\paragraph{(6)}

Let $C_{0}$ be a projective algebraic curve over $\bc$ with planar singularities, its \emph{$\delta$-invariant} $\delta(C_{0})$ is defined as the sum of the $\delta$-invariants of its singularities. Let $\psi: C\to S$ be a deformation of $C_{0}$ with smooth base $S$, then the function $s\in S\mapsto \delta(C_{s})$ is upper semi-continuous according to Teissier \cite{teissier resolution}. The subscheme $S_{\delta}$ parametrizing curves with $\delta$-invariant $\delta$ is then locally closed, and we obtain a stratification $S=\bigsqcup_{\delta=0}^{\delta(C_{0})}S_{\delta}$, called the \emph{$\delta$-stratification} of $S$.

\paragraph{(7)}

We denote $\bn$ for the set of natural numbers and $\bn_{0}=\bn\cup \{0\}$. 

{\small
\paragraph*{Acknowledgement} 

We want to thank Prof. G\'erard Laumon for his constant support and encouragements during the years. Part of the work is done during the author's visit to the Research Centre for Mathematics and Interdisciplinary Sciences at Shandong University, we want to thank the institute for the wonderful hospitality.  

}

\section{The affine Springer fibers and the compactified Jacobians}\label{springer as cj}

Let $\gamma\in \kg[\![\varep]\!]$ be a regular semisimple element. Let $\rtt$ be the centralizer of $\gamma$ in $G$, then $\rtt$ acts on $\xx_{\gamma}$ by left translation. 
Let $\rS$ be the maximal $F$-split subtorus of $\rtt$.
Let $\Lambda\subset \rS(F)$ be the subgroup generated by $\chi(\varep),\,\chi\in X_{*}(\rS)$, then $\Lambda$ acts simply transitively on the irreducible components of $\xx_{\gamma}$. 
The connected components of $\xx_{\gamma}$ is naturally parametrized by $\bz$ with the morphism
$$
\xx_{\gamma}\to \bz,\quad [g]\mapsto \val(\det(g)).
$$
Let $\xx_{\gamma}^{0}$ be the central connected component of $\xx_{\gamma}$, then $\xx_{\gamma}\cong \xx_{\gamma}^{0}\times \bz$. 
Let $$
\Lambda^{0}=\{\lambda\in \Lambda\mid \val(\det(\lambda))=0\},
$$ 
then $\Lambda^{0}$ acts naturally on $\xx_{\gamma}^{0}$.
The quotient $\Lambda^{0}\backslash \xx_{\gamma}^{0}$ is a projective algebraic variety over $\bc$ and $\xx_{\gamma}^{0}\to \Lambda^{0}\backslash \xx_{\gamma}^{0}$ is an \'etale Galois covering of Galois group $\Lambda^{0}$.

\subsection{Laumon's work}

In an attempt to deform the affine Springer fibers, Laumon \cite{laumon springer} discovered a remarkable relationship between the affine Springer fibers and the compactified Jacobian of an algebraic curve. By definition, the affine Springer fiber $\xx_{\gamma}$ parametrizes the lattices $L$ in $F^{n}$ such that $\gamma L\subset L$. Such lattices can be naturally identified with sub-$\co[\gamma]$-modules of finite type in $E:=F[\gamma]\cong F^{d}$. Moreover, such $\co[\gamma]$-modules can be globalized.   
Let $C_\gamma$ be an irreducible projective algebraic curve over $\bc$, with two points $c$ and $\infty$ such that
\begin{enumerate}[nosep, label=(\roman*)]
\item
$C_\gamma$ has unique singularity at $c$ and $\widehat{\co}_{C_\gamma,\,c}\cong \co[\gamma]$,

\item

the normalization of $C_\gamma$ is isomorphic to $\bp^{1}$ by an isomorphism sending $\infty\in \bp^{1}$ to $\infty\in C_\gamma$.

\end{enumerate}

\noindent The curve $C_{\gamma}$ will be called the \emph{spectral curve} of $\gamma$.
Let $M$ be a sub-$\co[\gamma]$-modules of finite type in $F[\gamma]$. 
Similar to the obvious fact that $\co_{C_{\gamma}}$ can be obtained by gluing $\widehat{\co}_{C_{\gamma},c}=\co[\gamma]$ and ${\co}_{C_{\gamma}\backslash\{c\}}$,  we can glue $M$ with the sheaf $\co_{C_{\gamma}\backslash\{c\}}$ along $(C_{\gamma}\backslash\{c\})\times_{C_{\gamma}} \spec(\widehat{\co}_{C_{\gamma},c})=\spec(E)$, to get a torsion-free coherent $\co_{C_{\gamma}}$-module $\cm$ of generic rank $1$. 
Recall that the compactified Jacobian $\overline{P}_{C_{\gamma}}:=\overline{\pic}{}^{0}_{C_{\gamma}/\bc}$ parametrizes the isomorphism class of the torsion-free coherent $\co_{C_{\gamma}}$-modules of generic rank $1$ and degree $0$.
Consider the functor $\overline{P}{}_{C_\gamma}^{\natural}$ which associates to an affine $\bc$-scheme $S$ the groupo\"id of couples $(M,\iota)$, where $M$ is a torsion free coherent $\co_{C_\gamma\times S}$-module of generic rank $1$ and degree $0$, and $\iota:M|_{(C_\gamma\backslash c)\times S}\cong \co_{(C_\gamma\backslash c)\times S}$ is a trivialization of $M$ over $(C_\gamma\backslash c)\times S$. One verifies that $\overline{P}{}_{C_\gamma}^{\natural}$ is representable by a $\bc$-scheme. With the glueing construction, we get a natural morphism
$$
\xx_{\gamma}^{0}\to \overline{P}_{C_\gamma}^{\natural}, 
$$
which sends a lattice $L\subset E$ satisfying $\gamma L\subset L$ to the torsion-free $\co_{C_\gamma}$-module obtained by gluing $L$ and $\co_{C_\gamma\backslash \{c\}}$ along $\spec(E)$, together with the obvious trivialization. Forgetting the trivialization $\iota$, we get a morphism $\overline{P}_{C_\gamma}^{\natural}\to \overline{P}_{C_\gamma}$. For the affine Springer fiber, this corresponds to the quotient $\xx_{\gamma}^{0}\to \Lambda^{0}\backslash \xx_{\gamma}^{0}$.

\begin{thm}[Laumon \cite{laumon springer}]\label{homeo SJ}

The morphism $\xx_{\gamma}^{0}\to \overline{P}_{C_\gamma}^{\natural}$ is finite radical and surjective, and so is the induced morphism $\Lambda^{0}\backslash \xx_{\gamma}^{0}\to \overline{P}_{C_\gamma}$. In particular, both morphisms are universal homeomorphisms. 

\end{thm}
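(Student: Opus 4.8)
\medskip
\noindent\textbf{Proof proposal.}
The plan is to establish the three properties --- finite, radicial, surjective --- directly for the gluing morphism $\xx_{\gamma}^{0}\to \overline{P}_{C_{\gamma}}^{\natural}$, where the extra datum of a trivialization makes the comparison with lattices transparent, and then to transfer them to the induced morphism $\Lambda^{0}\backslash \xx_{\gamma}^{0}\to \overline{P}_{C_{\gamma}}$ by descent along the Cartesian square relating the two. The universal homeomorphism assertions then follow from the standard criterion that a finite, radicial, surjective morphism is a universal homeomorphism.

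First I would produce a set-theoretic inverse to the gluing morphism on field-valued points. Fix a field $k\supseteq \bc$ and a point $(\cm,\iota)\in \overline{P}_{C_{\gamma}}^{\natural}(k)$. Restricting $\cm$ to the formal neighborhood $\spec \widehat{\co}_{C_{\gamma}\times k,\,c}=\spec(\co[\gamma]\otimes_{\bc}k)$ of $c$ yields a finitely generated, torsion-free $(\co[\gamma]\otimes_{\bc}k)$-module $\widehat{\cm}_{c}$. Since $C_{\gamma}\backslash\{c\}$ is smooth and $\cm$ is of generic rank $1$, the restriction of $\cm$ to $(C_{\gamma}\backslash\{c\})\times k$ is invertible and $\iota$ trivializes it, so that the base change of $\widehat{\cm}_{c}$ to $\spec(E\otimes_{\bc}k)$ is canonically identified with $E\otimes_{\bc}k$. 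This realizes $\widehat{\cm}_{c}$ as a full-rank sub-$(\co[\gamma]\otimes_{\bc}k)$-module of $E\otimes_{\bc}k=F[\gamma]\otimes_{\bc}k$, that is, as a $k$-point of $\xx_{\gamma}$; the condition $\deg\cm=0$ (normalized so that gluing $\co[\gamma]$ itself yields $\co_{C_{\gamma}}$) matches the vanishing of $\val\circ\det$ that cuts out the central component $\xx_{\gamma}^{0}$. By the gluing lemma for coherent sheaves (Beauville--Laszlo), this assignment is a two-sided inverse to the gluing morphism on $k$-points: re-gluing $\widehat{\cm}_{c}$ with $\co_{C_{\gamma}\backslash\{c\}}$ along $\spec(E\otimes_{\bc}k)$ reconstructs $(\cm,\iota)$, and the glued sheaf of a lattice $L$ restricts back to $L$. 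Hence the gluing morphism is bijective on $k$-points for every field $k$; by the standard characterization of universally injective morphisms it is radicial, by surjectivity on field-valued points it is surjective, and its fibers being finite it is quasi-finite.

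It remains to prove finiteness. The induced morphism $\Lambda^{0}\backslash \xx_{\gamma}^{0}\to \overline{P}_{C_{\gamma}}$ is a morphism of projective $\bc$-varieties, hence proper, and the square
\[
\begin{array}{ccc}
\xx_{\gamma}^{0} & \longrightarrow & \overline{P}_{C_{\gamma}}^{\natural}\\[4pt]
\downarrow & & \downarrow\\[4pt]
\Lambda^{0}\backslash \xx_{\gamma}^{0} & \longrightarrow & \overline{P}_{C_{\gamma}}
\end{array}
\]
is Cartesian, its vertical arrows being $\Lambda^{0}$-torsors and its top arrow $\Lambda^{0}$-equivariant --- which is exactly the compatibility recorded in the excerpt, that $\overline{P}_{C_{\gamma}}^{\natural}\to\overline{P}_{C_{\gamma}}$ ``corresponds to'' $\xx_{\gamma}^{0}\to\Lambda^{0}\backslash\xx_{\gamma}^{0}$. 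Thus the gluing morphism is a base change of a proper morphism, hence proper; being quasi-finite, it is finite, and being finite, radicial and surjective it is a universal homeomorphism. For the induced morphism, note that it becomes the gluing morphism after the fppf surjective base change $\overline{P}_{C_{\gamma}}^{\natural}\to\overline{P}_{C_{\gamma}}$; since properness, finiteness, radicialness and surjectivity all descend along fppf surjections (here the target is quasi-compact, so one may reduce to the quasi-compact case), the induced morphism is finite, radicial and surjective, hence also a universal homeomorphism.

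I expect the first step to be the main obstacle: making the ``restriction to the formal neighborhood of $c$'' precise enough that it genuinely inverts the gluing morphism on all field-valued points, and matching the normalizations --- $\deg\cm$ against $\val\circ\det$, and the full-rank condition against generic rank $1$. One should not try to upgrade the inverse to a scheme-theoretic one: the affine Springer fiber and the moduli scheme $\overline{P}_{C_{\gamma}}^{\natural}$ may carry genuinely different (non-reduced) scheme structures, so the gluing morphism need not be an isomorphism, and the argument must extract precisely ``finite $+$ radicial $+$ surjective'', no more.
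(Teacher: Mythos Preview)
The paper does not give a proof of this theorem: it is stated with attribution to Laumon \cite{laumon springer} and used as a black box. So there is no ``paper's own proof'' to compare against; your proposal is a reconstruction of what the argument in Laumon's paper should look like.

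Your outline is sound and is indeed close to Laumon's approach: Beauville--Laszlo gluing gives the bijection on field-valued points (hence radicial and surjective), and properness of the quotient map between projective varieties upgrades quasi-finiteness to finiteness. One point deserves more care: you assert that the square is Cartesian with both vertical arrows $\Lambda^{0}$-torsors, but the fiber of $\overline{P}_{C_{\gamma}}^{\natural}\to\overline{P}_{C_{\gamma}}$ over a point $[\cm]$ is a torsor under $\Gamma(C_{\gamma}\backslash\{c\},\co^{\times})$, not a priori under $\Lambda^{0}$. To make the square genuinely Cartesian you need to identify these groups (or at least match the relevant quotients), which amounts to computing the units on the punctured curve in terms of the split part of the torus $\rtt$; this is done in Laumon's paper but is not immediate from what is recalled here. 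Once that identification is in place, your descent argument goes through. Your closing remark that one should not expect a scheme-theoretic inverse is exactly right and is the reason the statement says ``finite radicial surjective'' rather than ``isomorphism''.
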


Unlike the affine Springer fiber or its quotients, the compactified Jacobian can be deformed naturally.
Let $\pi:(\cc, C_{\gamma})\to (\cb, 0)$ be an algebraization of a miniversal deformation of $C_{\gamma}$. 
Then $\overline{P}_{C_{\gamma}}$ fits in the projective flat family of relative compactified Jacobians 
$$
f:\overline{\cp}=\overline{\pic}{}^{\,0}_{\cc/ \cb}\to \cb.
$$ 
The compactified Jacobians of curves with planar singularities have very nice properties, which we can exploit to understand the affine Springer fibers.

On the other hand, it is more difficult to deform $\overline{P}{}_{C_{\gamma}}^{\natural}$. According to Laumon \cite{laumon springer}, it doesn't admit deformation over the whole base $\cb$ but only over the normalization $\widetilde{\cb}_{\delta_{\gamma}}$ of the $\delta$-stratum $\cb_{\delta_{\gamma}}$.
We will not need the full deformation of $\overline{P}{}_{C_{\gamma}}^{\natural}$ over $\widetilde{\cb}_{\delta_{\gamma}}$, but only its restriction to $\Delta_{\mu}$, where $\Delta_{\mu}$ is the closed subscheme of $\cb$ parametrizing equisingular deformations of $C_{\gamma}$ (see \S\ref{whitney regularity} for the notion of equisingularity).  According to Teissier \cite{zariski plane branch}, $\Delta_{\mu}$ is smooth over $\bc$.
Let $\overline{\cp}_{\mu}=\overline{\cp}\times_{\cb}\Delta_{\mu}$, and let $f_{\mu}:\overline{\cp}_{\mu}\to \Delta_{\mu}$ be the restriction of the family $f:\overline{\cp}\to \cb$ to $\Delta_{\mu}$.

\begin{thm}[Laumon \cite{laumon springer}]\label{universal in family}

There exists a $\Lambda^{0}$-torsor $\overline{\cp}{}^{\natural}_{\mu}\to \overline{\cp}_{\mu}$ such that its fiber over any geometric point $t\in \Delta_{\mu}$ coincides with the $\Lambda^{0}$-torsor $\overline{P}{}^{\natural}_{\cc_{t}}\to \overline{P}_{\cc_{t}}$.

\end{thm}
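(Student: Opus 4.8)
The plan is to build $\overline{\cp}{}^{\natural}_{\mu}$ directly as a moduli functor over $\Delta_{\mu}$, copying Laumon's construction of $\overline{P}{}^{\natural}_{C_{\gamma}}$ fibrewise, and then to verify that the forgetful morphism to $\overline{\cp}_{\mu}$ is a $\Lambda^{0}$-torsor by reducing the torsor property, uniformly in $t$, to the single-curve computation.

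First I would record the geometry of the equisingular family. Since $\Delta_{\mu}$ is smooth over $\bc$ (Teissier) and parametrizes equisingular deformations of $C_{\gamma}$, the family $\pi_{\mu}:\cc_{\mu}=\cc\times_{\cb}\Delta_{\mu}\to\Delta_{\mu}$ carries a smooth section $\sigma:\Delta_{\mu}\to\cc_{\mu}$ through the singular points and a simultaneous normalization $\nu:\widetilde{\cc}_{\mu}\to\cc_{\mu}$ whose formation commutes with base change to every fibre; this is the decisive consequence of Zariski equisingularity for families of plane curves, and it also forces the number $b$ of branches, their multiplicities $n_{1},\dots,n_{b}$, and hence the lattice $\Lambda$ generated by $\chi(\varep)$, $\chi\in X_{*}(\rS)$, together with its degree-zero part $\Lambda^{0}=\{\lambda\in\Lambda:\val(\det\lambda)=0\}$, to be constant over the connected stratum $\Delta_{\mu}$. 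In particular $\cc_{\mu}^{\circ}:=\cc_{\mu}\setminus\sigma(\Delta_{\mu})$ is smooth over $\Delta_{\mu}$ with geometrically connected fibres $\cc_{t}\setminus\{c_{t}\}$, each a $\bp^{1}$ with $b$ punctures, uniformly via $\nu$; and the simultaneous normalization identifies, formally along each of the $b$ components of $\nu^{-1}\sigma(\Delta_{\mu})$, the family with $\Delta_{\mu}\times\spec\bc[\![t_{i}]\!]$, with $\varep=t_{i}^{n_{i}}$ there, so that an element $\lambda=(\varep^{m_{i}})_{i}\in\prod F_{i}^{\times}$ of $\Lambda$ becomes an honest rational function $\prod_{i}t_{i}^{m_{i}n_{i}}$ that makes sense over all of $\Delta_{\mu}$.

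Next I would define $\overline{\cp}{}^{\natural}_{\mu}$ to send an affine $\Delta_{\mu}$-scheme $S$ to the groupoid of pairs $(\cm,\iota)$ with $\cm$ an $S$-flat family of torsion-free coherent $\co_{\cc_{\mu}\times_{\Delta_{\mu}}S}$-modules of generic rank $1$ and degree $0$, and $\iota$ a trivialization of $\cm$ over $\cc_{\mu}^{\circ}\times_{\Delta_{\mu}}S$. Its restriction to a geometric point $t\in\Delta_{\mu}$ is exactly Laumon's functor $\overline{P}{}^{\natural}_{\cc_{t}}$, so the asserted fibrewise identification is tautological; representability by a $\Delta_{\mu}$-scheme follows from that of $\overline{\cp}_{\mu}$ together with the representability of the sheaf of trivializations over $\cc_{\mu}^{\circ}$. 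It then remains to show $\overline{\cp}{}^{\natural}_{\mu}\to\overline{\cp}_{\mu}$ is a $\Lambda^{0}$-torsor. For this I would let $\Lambda^{0}$ act by twisting the gluing of $\cm$ at $\sigma$ by the rational functions described above (a modification at $\sigma$ that leaves $\iota$ untouched), check the action is free, and identify the quotient with $\overline{\cp}_{\mu}$: two trivializations of a fixed $\cm$ over $\cc^{\circ}_{\mu}$ differ by a section of $\co^{\times}$ on $\cc^{\circ}_{\mu}$, the group $\Gamma(\cc^{\circ}_{\mu},\co^{\times})/\Gamma(\Delta_{\mu},\co^{\times})$ is the constant group $\bz^{\,b-1}$ by the uniform $\bp^{1}$-with-$b$-punctures picture, and factoring out the automorphisms $\aut_{\co_{\cc_{\mu}}}(\cm)=\Gamma(\Delta_{\mu},\co^{\times})$ turns this into a torsor under $\Lambda^{0}$, exactly as in Laumon's computation for a single $C_{\gamma}$, now with constant output in families, so flatness — not merely fibrewise triviality — of $\overline{\cp}{}^{\natural}_{\mu}\to\overline{\cp}_{\mu}$ follows.

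The main obstacle I anticipate is precisely this last step of upgrading the per-curve torsor statement to one over all of $\Delta_{\mu}$ at once: one needs, after an étale or formal base change, a ``very good representative'' of the equisingular family along $\sigma$, so that the relevant relative unit and Picard groups of $\cc^{\circ}_{\mu}$ are genuinely locally trivial over $\Delta_{\mu}$ and the $\Lambda^{0}$-action is globally well defined and free — this is where Zariski's and Teissier's equisingularity theorems (existence of the simultaneous normalization, smoothness of $\Delta_{\mu}$) do the real work. Once that local triviality is secured, representability, the torsor property, and the fibrewise identification with $\overline{P}{}^{\natural}_{\cc_{t}}\to\overline{P}_{\cc_{t}}$ are all formal.
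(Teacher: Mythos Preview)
The paper does not supply its own proof of this theorem: it is quoted from Laumon, and the reader is referred to \cite{laumon springer} for the construction. Your plan---define $\overline{\cp}{}^{\natural}_{\mu}$ as the relative moduli functor of pairs (torsion-free sheaf, trivialization away from the singular section) and use smoothness of $\Delta_{\mu}$ together with simultaneous normalization along the equisingular stratum to make the fibrewise construction uniform---is the natural one and is essentially Laumon's.

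One step of your sketch needs more care than you give it. You compute the fibres of the forgetful map as $\Gamma(\cc_{t}\setminus\{c_{t}\},\co^{\times})/\bc^{\times}\cong\bz^{b-1}$ and then declare this to be a $\Lambda^{0}$-torsor ``exactly as in Laumon's computation''. But the $\Lambda^{0}$-action you actually describe, via $\lambda=(\varep^{m_{i}})\mapsto\prod_{i}t_{i}^{n_{i}m_{i}}$, only reaches the sublattice $\{(a_{i}):\sum a_{i}=0,\ n_{i}\mid a_{i}\}$ of the full unit lattice $\{(a_{i}):\sum a_{i}=0\}$, and this inclusion is strict as soon as some branch has ramification index $n_{i}>1$. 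Taken literally, your argument produces a $\bz^{b-1}$-torsor on which $\Lambda^{0}$ acts through a proper finite-index sublattice, so that $\Lambda^{0}\backslash\overline{\cp}{}^{\natural}_{\mu}$ would be a nontrivial finite cover of $\overline{\cp}_{\mu}$ rather than $\overline{\cp}_{\mu}$ itself. The precise identification of the structure group with $\Lambda^{0}$---and its compatibility with the universal homeomorphism of Theorem~\ref{homeo SJ}---is exactly where Laumon's argument does work; you should spell this out rather than invoke it, since the obstacle already appears for a single curve and is not merely a question of uniformity over $\Delta_{\mu}$.
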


\begin{rem}\label{reduce main}

As we will see in \S\ref{whitney regularity}, for $\gamma,\gamma'\in \kg[\![\varep]\!]$ with the same root valuation datum, the spectral curves $C_{\gamma}$ and $C_{\gamma'}$ are equisingular. As a corollary of theorem \ref{homeo SJ} and \ref{universal in family},  theorem \ref{main CJ} implies theorem \ref{main}. 

\end{rem}

The glueing construction of Laumon works for curves with multiple planar singularities. We will not go into details here, but refer the reader to the original work of Laumon.

\subsection{Ng\^o's support theorem}

Consider the family $\pi:\cc\to \cb$ and its relative compactified Jacobians 
$
f:\overline{\cp}\to \cb.
$ 
According to Fantechi-G\"ottsche-van Straten \cite{fgv}, the total space $\overline{\cp}$ is smooth over $\bc$. 
As the morphism $
f:\overline{\cp}\to \cb
$
is projective, the complex $Rf_{*}\ql$ is pure by Deligne's Weil-II \cite{weil2}. The decomposition theorem of Beilinson, Bernstein, Deligne and Gabber \cite{bbdg} then implies that
$$
Rf_{*}\ql=\bigoplus_{n=0}^{2\delta_\gamma} {}^{p}\ch^{n}(Rf_{*}\ql)[-n],
$$
where $\delta_{\gamma}$ is the $\delta$-invariant of $C_{\gamma}$, which is also the relative dimension of the flat morphism $f$.

\begin{thm}[Ng\^o's support theorem \cite{ngo}]\label{ngo support}

We have decomposition
$$
Rf_{*}\ql=\bigoplus_{n=0}^{2\delta_\gamma} j_{!*}(R^{n}f^{\sm}_{*}\ql)[-n],
$$
where $j:\cb^{\rm sm}\to \cb$ is the natural inclusion. In particular,
$$
H^{n}\big(\overline{P}_{C_\gamma}, \ql\big)=\bigoplus_{i=0}^{n}\ch^{n-i}\big(j_{!*}{\textstyle{\bigwedge^{i}}}R^{1}f^{\sm}_{*}\ql\big)_{0},\quad n=0,\cdots,2\delta_\gamma.
$$
 
\end{thm}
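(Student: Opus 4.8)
The plan is to recognize the displayed identity as the specialization of Ng\^o's support theorem to the weak abelian fibration provided by the relative compactified Jacobian, and then to extract the cohomological statement by passing to the stalk at $0$. One starts from the decomposition theorem recalled above, which splits $Rf_{*}\ql$ into the shifted perverse sheaves ${}^{p}\ch^{n}(Rf_{*}\ql)$; each of these decomposes in turn into a finite direct sum of intersection complexes $j_{Z!*}(\scl_{Z})$ indexed by irreducible closed \emph{supports} $Z\subseteq\cb$, with $\scl_{Z}$ a local system on a dense open of $Z$. The whole content of the theorem is that the only support which occurs is $Z=\cb$; granting this, each ${}^{p}\ch^{n}(Rf_{*}\ql)$ is the intermediate extension of its own restriction to the smooth locus $\cb^{\sm}$, that restriction being $R^{n}f^{\sm}_{*}\ql$ in the appropriate degree, and unwinding the perverse normalization yields the first displayed equation.

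To apply the support theorem one exhibits $f$ as a $\delta$-regular weak abelian fibration. The relative Picard scheme $P=\pic^{0}_{\cc/\cb}$ is a smooth commutative group scheme over $\cb$ acting on $\overline{\cp}=\overline{\pic}{}^{\,0}_{\cc/\cb}$ by tensoring; over a point $t$ whose curve $\cc_{t}$ has arithmetic genus $p_{a}$ and $\delta$-invariant $\delta_{t}$, the fibre $P_{t}$ is an extension of the abelian variety $\pic^{0}(\widetilde{\cc}_{t})$ of dimension $p_{a}-\delta_{t}$ by an affine group of dimension $\delta_{t}$, so the abelian defect of $P_{t}$ is exactly $\delta_{t}$. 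The remaining axioms are routine: $\overline{\cp}$ is smooth over $\bc$ by Fantechi--G\"ottsche--van Straten and $\cb$ is smooth (it is an affine space), so $Rf_{*}\ql$ is pure and the decomposition theorem applies; the stabilizers of rank-one torsion-free sheaves under $P$ are affine; $\dim\overline{\cp}_{t}=\dim P_{t}=p_{a}$ for all $t$; and over $\cb^{\sm}$ the Tate module of $P$ is polarizable, via the theta divisors on the Jacobians of the smooth fibres.

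The delicate hypothesis is $\delta$-regularity: writing $\cb_{\geq\delta}$ for the locus where $\delta_{t}\geq\delta$, one needs $\codim_{\cb}(\cb_{\geq\delta})\geq\delta$ for every $\delta>0$. For a miniversal deformation this is classical, and in fact holds with equality, since the generic curve in the $\delta$-stratum acquires $\delta$ ordinary double points, each imposing one condition on $\cb$. Granting all of this, the support inequality of the abelian-fibration formalism forces any support $Z$ to satisfy $\codim_{\cb}Z\leq\delta_{Z}$, where $\delta_{Z}$ is the generic value of $\delta$ along $Z$; by upper semicontinuity $Z\subseteq\cb_{\geq\delta_{Z}}$, hence $\codim_{\cb}Z\geq\delta_{Z}$, and, $\gl_{d}$ having no elliptic endoscopy to supply spurious supports, Ng\^o's argument upgrades this to $\delta_{Z}=0$; thus $Z$ meets $\cb^{\sm}$ and, being closed and irreducible, $Z=\cb$. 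Consequently ${}^{p}\ch^{n}(Rf_{*}\ql)=j_{!*}\big({}^{p}\ch^{n}(Rf_{*}\ql)|_{\cb^{\sm}}\big)$ for all $n$, which is the asserted decomposition. For the corollary one restricts to the stalk at $0$ using $\overline{\cp}_{0}=\overline{P}_{C_{\gamma}}$, and notes that the smooth fibres of $f$ over $\cb^{\sm}$ are Jacobians, hence abelian varieties, so $R^{n}f^{\sm}_{*}\ql=\bigwedge^{n}R^{1}f^{\sm}_{*}\ql$; the stalk cohomology of $\bigoplus_{n}j_{!*}(R^{n}f^{\sm}_{*}\ql)[-n]$ then gives the stated formula for $H^{n}(\overline{P}_{C_{\gamma}},\ql)$.

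The main obstacle I anticipate is bringing $f:\overline{\cp}\to\cb$ into Ng\^o's framework cleanly: unlike the Hitchin case the base here is the local, non-proper germ of a versal deformation, so one must check that the support theorem --- which is anyway local on the base --- still applies, and one must confirm the polarizability of the Tate module together with the $\delta$-regularity inequality. Each of these reduces to well-documented facts (the structure of Picard schemes of curves with planar singularities, the classical dimension count for $\delta$-constant strata, and the absence of endoscopy for $\gl_{d}$), but assembling them correctly is where the real work sits.
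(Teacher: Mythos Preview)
Your proposal is correct and follows essentially the same route the paper invokes: the paper does not spell out a proof here but refers to \cite{chen cj} and, in Remark~\ref{ngo support extended}, isolates exactly the two ingredients you assemble --- smoothness of $\overline{\cp}$ via Fantechi--G\"ottsche--van Straten (hence purity of $Rf_{*}\ql$) and Severi's inequality $\codim(\cb_{\delta})\ge\delta$ (your $\delta$-regularity) --- as ``all the main ingredients'' needed to run Ng\^o's argument. Your additional care with polarizability and affine stabilizers fills in the weak-abelian-fibration axioms that the paper leaves implicit.
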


\begin{rem}\label{ngo support extended}

We refer the reader to our work \cite{chen cj} for a sketch of the proof. Actually, the  theorem extends to the following situation: Let $B\to \cb$ be an embedding of a smooth algebraic variety over $\bc$, suppose that $B$ is transversal to all the $\delta$-strata $\cb_{\delta}$, i.e. for any point $x\in B\cap \cb_{\delta}$ the tangent space $T_{x}B$ is transversal to the tangent cone of $\cb_{\delta}$ at $x$. Then for the sub-family $f_{B}:\overline{\cp}\times_{\cb}B\to B$, Ng\^o's support theorem continues to hold. Indeed, by the local regularity criteria of Fantechi-G\"ottsche-van Straten \cite{fgv} (to be recalled below as theorem \ref{picard smooth local}), the total space $\overline{\cp}\times_{\cb}B$ is smooth over $\bc$. Moreover, Severi's inequality $\codim(B_{\delta})\ge \delta$ continues to hold by the transversality condition. We get then all the main ingredients in the proof of Ng\^o's support theorem.

\end{rem}

\section{Microlocal analysis of the intermediate extensions in Ng\^o's support theorem}

We will exploit Ng\^o's support theorem to understand the cohomology of $\overline{P}_{C_{\gamma}}$. For this, we make a microlocal analysis of the fiber $(j_{!*}R^{i}f_{*}^{\sm}\ql)_{0}$.
For simplicity we denote $\cf^{i}=R^{i}f_{*}^{\sm}\ql$.

The main ingredient for the analysis is the local regularity criteria of Fantechi-G\"ottsche-van Straten \cite{fgv}, we make a brief recall here. 
Let $C_{0}$ be an irreducible projective algebraic curve over $\bc$ with \emph{planar singularities}. For every $c\in C_{0}^{\rm sing}$, let 
$
R_{c}:=\widehat{\co}_{C_{0},c}\cong \bc[\![x,y]\!]/(f_{c}(x,y))
$ 
for some $f_{c}(x,y)\in \bc[\![x,y]\!]$, then the germ of singularity $\spf(R_{c})$ admits a miniversal deformation $\widehat{\phi}_{c}:\widehat{C}_{c}\to \widehat{B}_{c}$, and the tangent space of $\widehat{B}_{c}$ at its unique close point $0$ can be identified naturally with $\bc[\![x,y]\!]/(f_{c},\partial_{x}f_{c},\partial_{y}f_{c})$.
Let $EG_{c}$ be the closed sub formal scheme of $\widehat{B}_{c}$ which parametrizes the deformation of $\spf(R_{c})$ with constant geometric genus. According to Diaz-Harris \cite{dh}, the tangent cone of $EG_{c}$ at $0$ can be naturally identified with the sub vector space
$$
V(R_{c}):=\ka_{c}/(f_{c}, \partial_{x}f_{c}, \partial_{y}f_{c})\subset T_{0}(\widehat{B}_{c})=\bc[\![x,y]\!]/(f_{c},\partial_{x}f_{c},\partial_{y}f_{c}),
$$
where $\ka_{c}$ is the \emph{conductor} of $R_{c}$.
Note that $V(R_{c})$ is then of codimension $\delta(R_{c})$ in $T_{0}(\widehat{B}_{c})$. 
Let $\phi:(C, C_{0})\to (B,0)$ be a deformation of $C_{0}$ such that the restriction of $\phi$ to the formal neighbourhood $\widehat{B}_{0}$ of $B$ at $0$ is isomorphic to the deformation $\prod_{c\in C_{0}^{\rm sing}}\widehat{\phi}_{c}.$
Let $V(C_{0})$ be the union of the linear subspaces of $T_{0}(B)$ which is the preimage of 
$
V(R_{c})$
under the natural surjection $T_{0}(B)\cong \bigoplus_{c\in C_{0}^{\rm sing}}T_{0}(\widehat{B}_{c})\twoheadrightarrow T_{0}(\widehat{B}_{c})$ for all $c\in C_{0}^{\rm sing}$. 

\begin{thm}[Fantechi-G\"ottsche-van Straten \cite{fgv}]\label{picard smooth local}

Let $C_{T}\to T$ be a relative curve with base $T=\spec(\bc[\![t_{1},\cdots,t_{m}]\!])$ which comes from the family $\phi:C\to B$ by the base change with a morphism $T\to B$, then $\overline{\pic}{}^{0}_{C_{T}/T}$ is smooth over $\bc$ if and only if the image of the tangent space of $T$ is transversal to the subspace $V(C_{0})$ of $T_{0}(B)$. 

\end{thm}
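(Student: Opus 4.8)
The plan is to reduce the statement to a first-order transversality criterion at the tangent level, and then to a local analysis at the singular points of $C_{0}$ resting on the deformation--obstruction theory of torsion-free sheaves together with the Diaz--Harris description of the tangent cone of the equigeneric locus. First, since the formation of the compactified Picard commutes with base change, $\overline{\pic}{}^{0}_{C_{T}/T}=\overline{\cp}\times_{\cb}T$, where $\overline{\cp}=\overline{\pic}{}^{0}_{\cc/\cb}$ and $g\colon\overline{\cp}\to\cb$ is the structure morphism. The scheme $\overline{\cp}$ is smooth over $\bc$ by Fantechi-G\"ottsche-van Straten \cite{fgv}, $\cb$ is smooth over $\bc$, and $T$ is regular; as these three local rings are regular, a completed tensor product computation shows that the (excellent) scheme $\overline{\cp}\times_{\cb}T$ is regular at a closed point $([\mathcal{F}],0)$ --- which necessarily lies over the closed point of $T$, hence over $0\in\cb$, so that $\mathcal{F}$ is a torsion-free rank-one sheaf of degree $0$ on $C_{0}$ --- if and only if
$$\mathrm{image}\big(dg_{[\mathcal{F}]}\colon T_{[\mathcal{F}]}\overline{\cp}\to T_{0}\cb\big)\ +\ \mathrm{image}\big(T_{0}T\to T_{0}\cb\big)\ =\ T_{0}\cb .$$
It therefore suffices to identify $\mathrm{image}(dg_{[\mathcal{F}]})\subseteq T_{0}\cb$ for every such $\mathcal{F}$ and to check that the transversality conditions they collectively impose are equivalent to transversality to $V(C_{0})$.

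Near $[\mathcal{F}]$ the morphism $g$ is the forgetful map from the deformations of the pair $(C_{0},\mathcal{F})$, with $C_{0}$ varying in the family $\cc/\cb$, to the deformations of $C_{0}$; since $\overline{\cp}$ is smooth this yields the exact sequence
$$0\to\Ext^{1}_{C_{0}}(\mathcal{F},\mathcal{F})\to T_{[\mathcal{F}]}\overline{\cp}\xrightarrow{\,dg_{[\mathcal{F}]}\,}T_{0}\cb\xrightarrow{\ \mathrm{ob}_{\mathcal{F}}\ }\Ext^{2}_{C_{0}}(\mathcal{F},\mathcal{F}),$$
in which $\mathrm{ob}_{\mathcal{F}}$ carries a first-order deformation of $C_{0}$ to the obstruction against lifting $\mathcal{F}$, so that $\mathrm{image}(dg_{[\mathcal{F}]})=\ker(\mathrm{ob}_{\mathcal{F}})$. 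The local-to-global spectral sequence for $\Ext$ gives $\Ext^{2}_{C_{0}}(\mathcal{F},\mathcal{F})\cong\bigoplus_{c}\Ext^{2}_{R_{c}}(\widehat{\mathcal{F}}_{c},\widehat{\mathcal{F}}_{c})$, the local $\Ext$-sheaves being supported on $C_{0}^{\mathrm{sing}}$, and under $T_{0}\cb\cong\bigoplus_{c}T_{0}(\widehat{B}_{c})$ one has $\mathrm{ob}_{\mathcal{F}}=\bigoplus_{c}\mathrm{ob}_{\widehat{\mathcal{F}}_{c}}$, where $\mathrm{ob}_{M}\colon T_{0}(\widehat{B}_{c})\to\Ext^{2}_{R_{c}}(M,M)$ depends only on the local torsion-free module $M$. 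Everything is thus reduced to the following local statement: for a plane curve germ $R$ with miniversal base $\widehat{B}$, one has $\ker(\mathrm{ob}_{M})\supseteq V(R)$ for every torsion-free rank-one $R$-module $M$, with equality $\ker(\mathrm{ob}_{\widetilde{R}})=V(R)$ for the normalization module $M=\widetilde{R}$.

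For $\ker(\mathrm{ob}_{\widetilde{R}})\subseteq V(R)$: a first-order deformation of $R$ along which $\widetilde{R}$ lifts to a torsion-free rank-one module is equinormalizable, hence equigeneric, hence tangent to the equigeneric locus, whose tangent cone is $V(R)$ by Diaz-Harris \cite{dh}. For $\ker(\mathrm{ob}_{M})\supseteq V(R)$, for every $M$: a direction $v\in V(R)$ is tangent to the equigeneric locus, hence admits --- by Teissier's simultaneous resolution of equigeneric deformations of reduced curve singularities \cite{teissier resolution} --- a simultaneous normalization to first order, so that $\widetilde{R}$ and the conductor $\ka$ lift flatly along $v$; presenting any torsion-free rank-one $M$ as a fractional ideal with $\ka\subseteq M\subseteq\widetilde{R}$, one then lifts the induced submodule $M/\ka$ of the flatly deforming Artinian module $\widetilde{R}/\ka$ to a flat lift of $M$, i.e.\ $\mathrm{ob}_{M}(v)=0$. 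Hence $\bigcap_{M}\ker(\mathrm{ob}_{M})=V(R)$, the minimum being attained at $M=\widetilde{R}$. Globally, the sheaf $\mathcal{F}$ with $\widehat{\mathcal{F}}_{c}=\widetilde{R_{c}}$ at every $c\in C_{0}^{\mathrm{sing}}$ has $\mathrm{image}(dg_{[\mathcal{F}]})$ equal to the preimage of $\bigoplus_{c}V(R_{c})$ under $T_{0}\cb\cong\bigoplus_{c}T_{0}(\widehat{B}_{c})$, which is precisely $V(C_{0})$, while for every other $\mathcal{F}$ the subspace $\mathrm{image}(dg_{[\mathcal{F}]})$ contains this preimage; since transversality to a subspace passes to every larger subspace, transversality of $T_{0}T$ to all the $\mathrm{image}(dg_{[\mathcal{F}]})$ is equivalent to transversality to $V(C_{0})$, which is the assertion. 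The main obstacle is this last local inclusion $\ker(\mathrm{ob}_{M})\supseteq V(R)$: one must show that an equigeneric first-order deformation of the germ genuinely lifts \emph{every} torsion-free rank-one module, which requires carefully controlling the deformation of the chain $\ka\subseteq M\subseteq\widetilde{R}$ and the flatness of the resulting lift --- this is the heart of the Fantechi-G\"ottsche-van Straten criterion.
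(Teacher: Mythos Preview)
The paper does not give its own proof of this statement; it is quoted as a result of Fantechi--G\"ottsche--van Straten and used as a black box. So there is nothing in the paper to compare your argument against. Your outline is in fact close to the strategy of the original FGS proof: reduce regularity of the fibre product to a tangent-level transversality condition, express the image of $dg_{[\mathcal F]}$ as the kernel of the obstruction map, localise via the local-to-global spectral sequence, and identify the local kernels with the conductor subspace $V(R_c)$.

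There is, however, a genuine discrepancy in your final step. You write that the sheaf $\mathcal F$ which is $\widetilde{R_c}$ at every $c$ has $\mathrm{image}(dg_{[\mathcal F]})$ equal to the preimage of $\bigoplus_c V(R_c)$ under $T_0\cb\cong\bigoplus_c T_0(\widehat B_c)$, and that this ``is precisely $V(C_0)$''. But the paper defines $V(C_0)$ as the \emph{union} $\bigcup_c \pi_c^{-1}(V(R_c))$ of the individual preimages, not their intersection. What your argument actually yields is that smoothness of $\overline{\pic}{}^0_{C_T/T}$ is equivalent to transversality of $T_0T$ to the \emph{intersection} $\bigcap_c \pi_c^{-1}(V(R_c))$, i.e.\ to surjectivity of $T_0T\to\bigoplus_c T_0(\widehat B_c)/V(R_c)$. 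This is strictly stronger than transversality to each $\pi_c^{-1}(V(R_c))$ separately (take two nodes and a one-dimensional $T$ whose tangent direction projects to $(1,1)$: it is transversal to each hyperplane but not to their intersection). The correct FGS criterion is the one you prove --- transversality to the intersection --- and indeed this is what the paper actually checks in its applications (see the surjectivity of the map in the proof of Theorem~\ref{local acyclicity}); the paper's phrasing of $V(C_0)$ as a union is at best idiosyncratic. You should state explicitly which condition you are proving and flag the mismatch with the paper's definition.

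A smaller gap: your argument for $\ker(\mathrm{ob}_{\widetilde R})\subseteq V(R)$ appeals to the fact that the \emph{tangent cone} of the equigeneric locus is $V(R)$, but a first-order deformation along which $\widetilde R$ lifts only gives a vector in the Zariski \emph{tangent space} of $EG$, which a priori can be larger than its tangent cone. In FGS this inclusion is obtained by a direct computation of the obstruction map for $M=\widetilde R$, not via the Diaz--Harris tangent cone; you should either carry out that computation or explain why the equigeneric locus is smooth (or at least has reduced tangent cone equal to tangent space) so that the two coincide.
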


\subsection{Calculation of some vanishing cycles}\label{morse groups}

By the general theory of stratification (cf. \cite{mather stratification}, proposition 10.1), for the family $\pi:\cc\to \cb$, let $\Sigma$ be the critical locus of $\pi$, then there exists a unique pair of Whitney stratifications $\big(\{\cc_{\alpha}\}_{\alpha\in \kaa}, \{\cb_{\beta}\}_{\beta\in \kbb}\big)$ of $\cc$ and $\cb$ respectively, such that $\Sigma$ is a union of strata in $\{\cc_{\alpha}\}_{\alpha\in \kaa}$, and that
\begin{enumerate}[topsep=1pt, noitemsep, label=$(\arabic*)$]
\item if $\cc_{\alpha}\subset \Sigma$, then $\pi|_{\cc_{\alpha}}$ is an immersion and $\pi(\cc_{\alpha})$ is a stratum in $\{\cb_{\beta}\}_{\beta\in \kbb}$,

\item

if $\cc_{\alpha}\cap \Sigma=\emptyset$, then $\pi(\cc_{\alpha})$ is a stratum in $\{\cb_{\beta}\}_{\beta\in \kbb}$ and $\cc_{\alpha}=\pi^{-1}(\pi(\cc_{\alpha}))\cap (\cc-\Sigma)$,

\item

all the other pairs of Whitney stratifications of $\cc$ and $\cb$ satisfying the above conditions are refinements of the above one.

\end{enumerate}

\noindent This unique pair of Whitney stratifications will be called the \emph{canonical} Whitney stratifications of the family $\pi:\cc\to \cb$.
By construction, $\cb^{\sm}$ is one of the strata in $\{\cb_{\beta}\}_{\beta\in \kbb}$, and for any stratum $\cc_{\alpha}$ with image $\cb_{\beta}$ the subspace $\cc\times_{\cb}\cb_{\beta}$ is Whitney regular along $\cc_{\alpha}$.  By theorem \ref{equisingularity}, the family $\pi:\cc\times_{\cb}\cb_{\beta}\to \cb_{\beta}$ is equisingular along $\cc_{\alpha}$. 
As $\cf^{i}=\bigwedge^{i}\cf^{1}$ and $\cf^{1}=R^{1}\pi^{\sm}_{*}\ql$ is lisse over $\cb^{\sm}$, the intermediate extension $j_{!*}\cf^{i}$ is constructible with respect to the Whitney stratification $\{\cb_{\beta}\}_{\beta\in \kbb}$.

Fix a stratum $\cb_{\beta}$ of dimension $d_{\beta}$, let $x$ be a point on it, we take a generic affine subspace of $\cb=\ba^{\tau_{\gamma}}$ of codimension $d_{\beta}$ passing through $x$ such that it  is transversal at $x$ to all the strata $\cb_{\beta}, \beta\in \kbb$.
This is possible because the stratification is locally finite. 
We then cut off from this affine subspace the closed subscheme where its intersection with the strata $\{\cb_{\beta}\}_{\beta\in \kbb}$ is not transversal, to get a dense open subscheme $V$ of this affine subspace. By construction, $V$ intersects transversally with all the $\delta$-strata and it contains $0$. 
Let $f_{V}:\overline{\cp}_{V}\to V$ be the restriction of the family $f:\overline{\cp}\to \cb$ to $V$.

\begin{prop}\label{support v}

The subspace $\overline{\cp}_{V}$ is nonsingular over $\bc$, and Ng\^o's support theorem holds for the family $f_{V}$, i.e.
$$
R(f_{V})_{*}\ql=\bigoplus_{i=0}^{2\delta_{\gamma}} (j_{V})_{!*} R^{i}(f^{\sm}_{V})_{*}\ql[-i],
$$
where $j_{V}:V^{\sm}\to V$ is the natural inclusion. 

\end{prop}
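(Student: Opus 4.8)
The plan is to deduce the proposition as an instance of Remark~\ref{ngo support extended}. Since $V$ is, by construction, a dense open subscheme of an affine subspace of $\cb=\ba^{\tau_\gamma}$, it is a smooth algebraic variety over $\bc$; hence the only point to check before invoking that remark is its transversality hypothesis: for every $\delta$-stratum $\cb_\delta$ and every $x\in V\cap\cb_\delta$, the tangent space $T_xV$ is transversal to the tangent cone $C_x(\cb_\delta)$ of $\cb_\delta$ at $x$.

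The heart of the matter is this transversality, and the subtlety is that the $\delta$-stratification $\{\cb_\delta\}$ need not be a Whitney stratification, whereas $V$ was only arranged to meet transversally the canonical Whitney strata $\{\cb_\beta\}_{\beta\in\kbb}$ of the family $\pi:\cc\to\cb$. I would first note that each $\cb_\delta$ is nevertheless a union of strata $\cb_\beta$: by Theorem~\ref{equisingularity} the family $\pi$ is equisingular along every $\cb_\beta$, so the $\delta$-invariant is constant on $\cb_\beta$, forcing $\cb_\beta$ to lie in a single $\cb_\delta$. Then I would invoke the standard interplay between Whitney regularity and transversality (cf.\ \cite{mather stratification}): a linear subspace transversal to every stratum of a Whitney stratification is automatically transversal, in the tangent-cone sense, to any union of strata. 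Concretely, at $x\in\cb_\beta\subset\cb_\delta$ the tangent cone $C_x(\cb_\delta)$ is swept out by limits of tangent spaces $T_y\cb_{\beta'}$ along the finitely many strata $\cb_{\beta'}\subset\cb_\delta$ that accumulate at $x$, and Whitney's conditions together with the removal from $V$ of the non-transversal locus force $T_xV$ to meet every such limiting subspace transversally. This is exactly where the dense-open truncation in the construction of $V$ is used.

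Granting the transversality, both assertions follow as outlined in Remark~\ref{ngo support extended}. For smoothness: at any $x\in V$, lying in some $\cb_\delta$, the formal neighbourhood of $x$ in $\cb$ induces a versal deformation of the singularities of $\cc_x$, and by Diaz--Harris \cite{dh} the subspace $V(\cc_x)$ of Fantechi--G\"ottsche--van Straten is identified with the tangent cone at $x$ of the equigeneric locus, which near $x$ coincides with $\cb_\delta$; since $T_xV$ is transversal to $C_x(\cb_\delta)$, Theorem~\ref{picard smooth local} gives that $\overline{\cp}_V$ is smooth over $\bc$ at $x$. For Ng\^o's support theorem: $f_V$ is projective, its total space is smooth, it carries the compactified-Jacobian abelian fibration structure, and Severi's inequality $\codim_V(V\cap\cb_\delta)\ge\delta$ holds because a transversal intersection with $V$ preserves the codimension bound $\codim_\cb(\cb_\delta)\ge\delta$. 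These being precisely the inputs of Ng\^o's argument, one obtains the decomposition $R(f_V)_*\ql=\bigoplus_{i=0}^{2\delta_\gamma}(j_V)_{!*}R^{i}(f^{\sm}_V)_*\ql[-i]$.

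I expect the transversality to the $\delta$-strata to be the only genuine obstacle: one must be sure that genericity plus transversality to all the Whitney strata $\cb_\beta$ really does control the tangent cones of the coarser, a priori non-Whitney, $\delta$-strata $\cb_\delta$. Once that is in place, the rest is formal, just as indicated in Remark~\ref{ngo support extended}.
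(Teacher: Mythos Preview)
Your route is essentially the paper's: verify the transversality, invoke Fantechi--G\"ottsche--van Straten for smoothness, note that Severi's inequality survives, and run Ng\^o's argument. The paper does this directly rather than via Remark~\ref{ngo support extended}, and in fact asserts the required transversality to $V(\cc_z)$ simply ``by construction'' (i.e.\ by the genericity of the affine slice together with the removal of the non-transversal locus), with less elaboration than you offer on the Whitney side.

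There is, however, one step in your smoothness argument that fails as written. Your identification of $V(\cc_x)$ with the tangent cone $C_x(\cb_\delta)$ is only correct when $\cc_x$ has a single singularity. If $\cc_x$ has singular points $\{c\}$, then near $x$ the base splits as a product of the miniversal bases $\widehat{B}_c$ and the equigeneric locus is $\prod_c EG_c$, so its tangent cone at $x$ is the \emph{intersection} $\bigcap_c L_c$, where $L_c$ is the preimage of $V(R_c)$; by contrast, by definition $V(\cc_x)=\bigcup_c L_c$. Transversality of $T_xV$ to $\bigcap_c L_c$ does not give transversality to each $L_c$, which is exactly what Theorem~\ref{picard smooth local} requires. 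The fix is to run your Whitney argument not for $\cb_\delta$ but for each $L_c$ separately: the locus where the germ at $c$ stays equigeneric (the preimage of $EG_c$) is itself a union of Whitney strata containing $x$ in their closure, with tangent cone $L_c$ at $x$, and your limiting-tangent-space reasoning then yields transversality of $T_xV$ to each $L_c$, hence to $V(\cc_x)$. With this repair the argument goes through and matches the paper's.
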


\begin{proof}

By construction, for any point $z\in V\cap \Delta$, $V$ is transversal to the subspace $V(\cc_{z})$ of $T_{z}(\cb)$.
By theorem \ref{picard smooth local}, $\overline{\cp}_{V}$ is nonsingular over $\bc$.
As a consequence, the complex $R(f_{V})_{*}\ql$ is pure. Moreover, Severi's inequality
$
\codim(V\cap \cb_{\delta})\ge \delta
$
continues to hold for all the $\delta$-strata as $V$ intersects transversally with them. Hence the reasoning of theorem \ref{ngo support} continues to work and we get the support theorem.
 
\end{proof}

Let $p:V\to \ba^{1}$ be a generic linear projection sending $x$ to $0$, such that its geometric fibers over a sufficiently small open neighborhood of $0\in \ba^{1}$ intersect transversally with the strata $\cb_{\beta}$ which contains $0$ in their closure. 
Let $\bar{\eta}_{0}$ be a geometric generic point of the strict Henselization $\ba^{1}_{\{0\}}$. Let $R\Psi_{\bar{\eta}_0}$  (resp. $R\Phi_{\bar{\eta}_0}$) be the nearby cycle (resp. vanishing cycle) functor with respect to the projection $p$.
We denote $\cf_{V}^{i}=R^{i}(f^{\sm}_{V})_{*}\ql$, it equals the restriction of $\cf^{i}$ to $V^{\sm}$.


\begin{thm}\label{local acyclicity}

Under the above settings, we have

\begin{enumerate}[topsep=0pt, itemsep=0pt, label=$(\arabic*)$]

\item
If $\cb_{\beta}$ is not one of the strict $\delta$-strata, then
$$
(R\Phi_{\bar{\eta}_{0}}R(f_{V})_{*}\bq_{\ell})_{x}=0.
$$
Consequently, for $i=1,\cdots,2\delta_\gamma$, we have
$$
(R\Phi_{\bar{\eta}_0}(j_{V})_{!*}\cf_{V}^{i})_{x}=0. 
$$

\item

If $\cb_{\beta}$ is one of the strict $\delta$-strata $\cb_{\delta}^{\circ}$, then
$$
(R\Phi_{\bar{\eta}_{0}}R(f_{V})_{*}\bq_{\ell})_{x}=\boldsymbol{\Lambda}_{x}^{\bullet}\otimes\ql(-\delta)[-2\delta+1],
$$
where $\boldsymbol{\Lambda}_{x}^{\bullet}$ is the exterior algebra $\bigoplus H^{*}\big(\jac_{\widetilde{\cc}_{x}},\ql\big)$ with $\widetilde{\cc}_{x}$ being the normalization of the curve $\cc_{x}$. 
Consequently,  we have
$$
(R^{\delta-1}\Phi_{\bar{\eta}_{0}}(j_{V})_{!*}\cf_{V}^{i})_{x}=
\boldsymbol{\Lambda}_{x}^{{i-\delta}}\otimes\ql(-\delta),
$$ 
and all the other vanishing cycles vanish.

\end{enumerate}

\end{thm}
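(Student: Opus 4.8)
The plan is to compute the vanishing cycles $R\Phi_{\bar\eta_0}R(f_V)_*\ql$ at $x$ by exploiting Ng\^o's support theorem for $f_V$ (Proposition~\ref{support v}) together with the smoothness of the total space $\overline{\cp}_V$ (Theorem~\ref{picard smooth local}). The crucial input is that the generic projection $p\colon V\to \ba^1$ has geometric fibers transversal to all the relevant strata $\cb_\beta$ near $0$. So the key first step is a \emph{local acyclicity} statement: I would argue that $f_V$ restricted over a generic fiber $p^{-1}(\bar\eta_0)$ is locally acyclic relative to $p\circ f_V$, except possibly along loci where the singularities of the family genuinely degenerate. More precisely, since $\overline{\cp}_V$ is smooth, the morphism $p\circ f_V\colon \overline{\cp}_V\to \ba^1$ has vanishing cycles concentrated at the critical points of $p\circ f_V$; a Morse-theoretic / microlocal analysis identifies these critical points with the locus in $\overline{\cp}_V$ lying over $x$ where the fiber of $f_V$ acquires its singularity. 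When $\cb_\beta$ is not a strict $\delta$-stratum, the transversality of $p$ to $\cb_\beta$ combined with the equisingularity of $\pi$ along the canonical Whitney strata (Theorem~\ref{equisingularity}) forces the family $f_V$ to be, topologically, a product near $x$ in the $p$-direction---so there are no vanishing cycles, giving part~(1). For part~(2), over a strict $\delta$-stratum $\cb_\delta^\circ$ the curve $\cc_x$ has exactly $\delta$ nodes, and the local picture of $f_V$ over the transversal slice is the well-understood relative compactified Jacobian of a family of $\delta$-nodal degenerations; this is where the exterior algebra $\boldsymbol\Lambda_x^\bullet=\bigoplus H^*(\jac_{\widetilde\cc_x},\ql)$ and the shift $[-2\delta+1]$ with Tate twist $\ql(-\delta)$ emerge.

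The second main step is the computation of the vanishing cycles of the relative compactified Jacobian of a family of nodal curves acquiring $\delta$ nodes. Here I would use the known local structure: near a node, $\overline{\pic}{}^0$ of the versal nodal degeneration is itself a simple (quadratic-cone-type) singularity, and the compactified Jacobian of a curve with $\delta$ nodes fibers over the Jacobian of the normalization $\widetilde\cc_x$ with fibers that are $\delta$-fold products of $\bp^1$-chains degenerating to nodal cubics. The vanishing cycle of each single-node degeneration contributes a $\ql(-1)[-1]$ (up to the overall normalization), and the $\delta$ independent nodes multiply to give the shift $-2\delta+1$ after accounting for the dimension; tensoring with $H^*(\jac_{\widetilde\cc_x})$ accounts for the "transverse" directions coming from the Jacobian of the normalization. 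This is essentially a K\"unneth computation once the local model is in hand, and it matches the numerology $2\delta-1$ = (real codimension of the vanishing cycle support).

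The third step deduces the statements about $(j_V)_{!*}\cf_V^i$ from the statement about $R(f_V)_*\ql$. From Proposition~\ref{support v}, $R(f_V)_*\ql=\bigoplus_i (j_V)_{!*}\cf_V^i[-i]$, and $\cf_V^i=\bigwedge^i\cf_V^1$, so applying $R\Phi_{\bar\eta_0}$ and extracting graded pieces by degree gives $(R\Phi_{\bar\eta_0}R(f_V)_*\ql)_x=\bigoplus_i (R\Phi_{\bar\eta_0}(j_V)_{!*}\cf_V^i)_x[-i]$. In case~(1) the left side vanishes, hence each summand vanishes. In case~(2), the right side is a direct sum (over $i$, with shift $[-i]$) of complexes each supported in a single cohomological degree; matching this against $\boldsymbol\Lambda_x^\bullet\otimes\ql(-\delta)[-2\delta+1]$, where $\boldsymbol\Lambda_x^\bullet$ is graded by $*$, forces $R\Phi_{\bar\eta_0}(j_V)_{!*}\cf_V^i$ to sit entirely in degree $\delta-1$ with stalk $\boldsymbol\Lambda_x^{i-\delta}\otimes\ql(-\delta)$. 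One should check here that the perverse-sheaf symmetry of the $j_{!*}$'s (self-duality up to the appropriate twist, forcing the weight/degree bookkeeping to line up) pins down the distribution of the graded pieces uniquely---this is a standard "purity forces strictness" argument using that $R\Phi$ of a pure perverse sheaf on a curve-slice is again pure.

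**The hard part will be** the first step: rigorously establishing that the only contributions to $R\Phi_{\bar\eta_0}R(f_V)_*\ql$ come from the fiber over $x$ and, in case~(1), that these contributions actually vanish. The subtlety is that smoothness of $\overline{\cp}_V$ alone does not immediately localize the vanishing cycles of $p\circ f_V$ to the "expected" locus---one needs to control the critical locus of $p\circ f_V$, and this is governed by how the generic projection $p$ interacts with the discriminant $\Delta\subset V$ and, ultimately, by the Whitney regularity of the strict $\delta$-strata over the equisingular strata, which is exactly the geometric input the paper develops separately. So I expect this theorem to be proved by quoting that Whitney-regularity result to guarantee the requisite transversality of $p$'s fibers, then running the microlocal/Morse-theoretic localization, and finally the nodal-degeneration K\"unneth computation.
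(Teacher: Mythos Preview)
Your proposal has a genuine gap in part~(1), and it stems from misidentifying the mechanism that makes the vanishing cycles vanish. You propose to use equisingularity along the Whitney strata and ultimately the Whitney regularity result (Theorem~\ref{key regularity}) to conclude that $f_V$ is ``topologically a product near $x$ in the $p$-direction''. But Theorem~\ref{key regularity} is proved \emph{after} this theorem and is used together with it to deduce the main result; invoking it here would invert the paper's logic. More importantly, topological triviality \emph{along} $\cb_\beta$ is not what is needed: one must show there are no vanishing cycles in the \emph{transversal} direction, i.e.\ that $p\circ f_V$ is acyclic over $x$ itself.

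The paper's actual argument is a direct dimension count combined with the Fantechi--G\"ottsche--van Straten criterion. The defining property of a strict $\delta$-stratum is that it has codimension exactly $\delta$ in $\cb$; any other stratum $\cb_\beta$ contained in $\cb_\delta$ has strictly larger codimension. Hence when $\cb_\beta$ is not strict, $\dim V=\codim_\cb(\cb_\beta)>\delta(\cc_x)$, so the generic fiber $H_x=p^{-1}(0)\cap V$ has dimension $\ge \sum_i \delta_i$. This is exactly enough for $H_x$ (being generic) to surject onto the $(\sum_i\delta_i)$-dimensional quotient $\bigoplus_i \defm_{R_i}^{\rm top}(\bc[\epsilon]/\epsilon^2)/V(R_i)$, i.e.\ $H_x$ is transversal to $V(\cc_x)$. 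Theorem~\ref{picard smooth local} then gives that $(f_V)^{-1}(H_x)$ is smooth, hence $p\circ f_V$ is smooth along $(f_V)^{-1}(x)$, and local acyclicity of smooth morphisms kills the vanishing cycles. No Whitney regularity, no topological triviality---just linear algebra plus the FGS criterion.

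Your approach to part~(2) is essentially correct and matches the paper: the local model over a strict $\delta$-stratum is the product of $\delta$ nodal degenerations $(x_j,y_j)\mapsto x_jy_j$ fibered over $\jac_{\widetilde\cc_x}$, and the composite with a generic linear form $\sum x_jy_j$ is an ordinary quadratic singularity in $2\delta$ variables, whose vanishing cycle is $\ql(-\delta)$ in degree $2\delta-1$. For the deduction step, the paper does not use a purity/strictness argument as you suggest; it invokes Theorem~\ref{ic via nearby}, which says directly that $R\Phi$ of an intermediate extension with respect to a generic projection is concentrated in the single degree $\dim(V)-1$. This immediately forces the decomposition $\bigoplus_i (R^{\dim V-1}\Phi_{\bar\eta_0}(j_V)_{!*}\cf_V^i)_x[-i]$ and allows one to read off each summand by matching degrees.
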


\begin{proof}

Let $x'$ be a geometric point of $V_{\{x\}}\times_{\ba^{1}}\ba_{\{0\}}^{1}$, and $\cc_{x'}$ the fiber at $x'$ of the family $\pi:\cc\to \cb$. Let $\{c_{i}\}_{i\in I_{x'}}$ be the set of singular points of $\cc_{x'}$, let $R_{i}$ be the completed local ring of $\cc_{x'}$ at $c_{i}$. By the deformation theory of algebraic curves, we have a natural surjection of tangent spaces
\begin{equation}\label{tangent surj}
T_{x'}(\cb)\twoheadrightarrow \bigoplus_{i\in I_{x'}}\defm_{R_{i}}^{\rm top}(k[\ep]/\ep^{2}). 
\end{equation} 
Let $\tau_{i}$ be the Tjurina number of the local singularity at $c_{i}$, and $\delta_{i}$ the $\delta$-invariant. It is known that the tangent space $\defm_{R_{i}}^{\rm top}(k[\ep]/\ep^{2})$ is of dimension $\tau_{i}$ and that $\tau_{i}\ge \delta_{i}$.

Let $H_{x'}$ be the fiber of $p$ passing through $x'$, it is of codimension 
$
d_{\beta}+1$. Note that $\cb_{\beta}$ is contained in the $\delta$-stratum with $\delta=\delta(\cc_{x})$, we get then the estimate 
\begin{equation}\label{codim d}
d_{\beta}\le \tau_{\gamma}-\delta(\cc_{x})\le \tau_{\gamma}-\delta(\cc_{x'})=\tau_{\gamma}-\sum_{i\in I_{x'}} \delta_{i},
\end{equation}
where the second inequality is due to the semicontinuity of the $\delta$-invariant (\cite{teissier resolution}, I-1.3.2). In particular, we have
\begin{equation}\label{dim h}
\dim(H_{x'})=\tau_{\gamma}-(d_{\beta}+1)\ge \sum_{i\in I_{x'}} \delta_{i}-1.
\end{equation}

$(1)$ In case that $\cb_{\beta}$ is not one of the strict $\delta$-strata, the first inequality in the equation (\ref{codim d}) is strict, hence also the inequality of (\ref{dim h}), and so
$$
\dim(H_{x'})\ge \sum_{i\in I_{x'}} \delta_{i}.
$$
 We claim that the image of $H_{x'}$ in $\defm_{R_{i}}^{\rm top}(\bc[\ep]/\ep^{2})$ under the projection (\ref{tangent surj}) is transversal to the subspace $V(R_{i})$ for all $i\in I_{x'}$.
For this, we need to verify that the morphism
\begin{equation}\label{tangent surj to H}
H_{x'}\to \bigoplus_{i\in I_{x'}}\defm_{R_{i}}^{\rm top}(k[\ep]/\ep^{2})/V(R_{i})
\end{equation}
is surjective. 
But this is clear because the vector space at the right hand side is a quotient of $T_{x'}\cb$ of dimension $\sum_{i\in I_{x'}}\delta_{i}$, and $H_{x'}$ is a generic subspace of $T_{x'}\cb$ of dimension at least $\sum_{i\in I_{x'}}\delta_{i}$.
With this transversality result, by theorem \ref{picard smooth local}, the subspace $(f_{V})^{-1}(H_{x'})$ is nonsingular at the points of the fiber $(f_{V})^{-1}(x')$. This implies that the composite $p\circ f_{V}$ is smooth at the points of $(f_{V})^{-1}(x)$ as it is clearly flat at these points. The local acyclicity of the smooth morphism then implies that
$$
R\Phi'_{\bar{\eta}_{0}}\bq_{\ell}\big|_{ (f_{V})^{-1}(x)}=0,
$$
where $R\Phi'_{\bar{\eta}_{0}}$ is the vanishing cycle functor for the morphism $p\circ f_{V}$. As $f_{V}$ is proper, by proper base change theorem, we get 
$$
(R\Phi_{\bar{\eta}_{0}}R(f_{V})_{*}\bq_{\ell})_{x}=0.
$$
With the decomposition of proposition \ref{support v}, we obtain 
\begin{equation}\label{vc decomp}
\big(R\Phi_{\bar{\eta}_0}R(f_{V})_{*}\bq_{\ell}\big)_{x}=\bigoplus_{i=0}^{2\delta_{\gamma}} \big(R\Phi_{\bar{\eta}_0}(j_{V})_{!*}\cf_{V}^{i}\big)_{x}[-i].
\end{equation}
By theorem \ref{ic via nearby}, we have 
$$
R^{q}\Phi_{\bar{\eta}_0}(j_{V})_{!*}\cf_{V}^{i}[\dim(V)]=
\begin{cases}
{\rm Im}\big\{T-\Id; R^{-1}\Psi_{\bar{\eta}_0}(j_{V})_{!*}\cf_{V}^{i}[\dim(V)]\big\},& \text{ for }q=-1,\\
0, &\text{ otherwise,}
\end{cases}
$$
where $T$ is the monodromy action on the nearby cycle. 
Hence 
\begin{equation}\label{vc decomp}
\big(R\Phi_{\bar{\eta}_0}R(f_{V})_{*}\bq_{\ell}\big)_{x}=\bigoplus_{i=0}^{2\delta_{\gamma}} \big(R^{\dim(V)-1}\Phi_{\bar{\eta}_0}(j_{V})_{!*}\cf_{V}^{i}\big)_{x}[-i].
\end{equation}
As the left hand side vanishes, so does each term on the right.

\medskip

$(2)$ In case that $\cb_{\beta}$ is one of the strict $\delta$-strata $\cb_{\delta}^{\circ}$, we get $\dim(V)=\delta$ and that the curve $\cc_{x}$ is irreducible with $\delta$ ordinary double points as its singularity.
We have then
$$
R_{i}\cong \bc[\![x,y]\!]/(xy),\quad \forall i\in I_{x},
$$
and so 
$$
\defm_{R_{i}}^{\rm top}\cong \spf(\bc[\![t]\!]), \quad \forall i\in I_{x},
$$
and we calculate directly that $V(R_{i})=0$. Moreover, the transversal slice $V$ can be identified with the base of a miniversal deformation of the singularities of $\cc_{x}$. 
Then the fiber $H_{x}$, being of dimension $\delta-1$, can not make the morphism (\ref{tangent surj to H}) surjective. But it does intersect transversally with each subspace $\defm_{R_{i}}^{\rm top}(\bc[\ep]/\ep^{2})$. 
Hence by theorem \ref{picard smooth local} all the singularities of the fiber $(p\circ f_{V})^{-1}(0)$ are smoothed when we deform it to the fiber $(p\circ f_{V})^{-1}(\bar{\eta}_{0})$. 
This deformation process can be understood more concretely as follows: Let $C_{xy}$ be the irreducible projective rational curve over $\bc$ with a unique ordinary double point as singularity, let $\overline{P}_{xy}$ be its compactified Jacobian, then $\overline{P}_{xy}\cong C_{xy}$.   
With the gluing construction of Laumon \cite{laumon springer}, we have the d\'evissage
\begin{equation}\label{devi j}
1\to \underbrace{\overline{P}_{xy}\times \cdots \times\overline{P}_{xy}}_{\delta} \to \overline{\jac}_{\cc_{x}}\to \jac_{\widetilde{\cc}_{x}}\to 1,
\end{equation}
and the deformation is only for the first factor. 
Let $\hat{f}_{xy}:\widehat{\cp}_{xy}\to \widehat{\ba}_{0}^{1}$ be the miniversal deformation of $\overline{P}_{xy}$. 
The deformation in question is then equivalent to
\begin{equation}\label{local model}
\underbrace{\widehat{\cp}_{xy}\times \cdots \times\widehat{\cp}_{xy}}_{\delta}
\xrightarrow{\hat{f}_{xy}\times\cdots\times \hat{f}_{xy}} \underbrace{\widehat{\ba}_{0}^{1}\times \cdots \times\widehat{\ba}_{0}^{1}}_{\delta}
\xrightarrow{\sum_{i=1}^{\delta}a_{i}} \widehat{\ba}_{0}^{1}.
\end{equation}
As $\overline{P}_{xy}\cong C_{xy}$, the deformation $\hat{f}_{xy}:\widehat{\cp}_{xy}\to \widehat{\ba}_{0}^{1}$ can be described explicitly as $\bc^{2}\to \bc: (x,y)\mapsto xy$,
and the above composite is nothing but
$$
\bc^{2\delta}\to \bc,\quad (x_{1}, y_{1}, \cdots,x_{\delta},y_{\delta})\mapsto \sum_{i=1}^{\delta}x_{i}y_{i},
$$
which describes the miniversal deformation of an ordinary quadratic singularity. Its vanishing cycle and the monodromy action is well known. 
Let $R\overline{\Phi}_{\bar{\eta}_{0}}$ be the vanishing cycle functor for the composite (\ref{local model}), then $R^{m}\bar{\Phi}_{\bar{\eta}_{0}}\ql$ is supported at the point $\bar{c}:=(c,\cdots,c)$ with $c$ being the unique double point of $\overline{P}_{xy}$, and that
$$
(R^{m}\bar{\Phi}_{\bar{\eta}_{0}}\ql)_{\bar{c}}
=\begin{cases} \ql(-\delta),& \text{if }m=2\delta-1,\\
0, &\text{otherwise}.
\end{cases}
$$
With the d\'evissage (\ref{devi j}) and the freeness property in \cite{ngo}, proposition 7.4.10, we obtain that 
$$
(R\Phi_{\bar{\eta}_{0}}R(f_{V})_{*}\ql)_{x}
=\boldsymbol{\Lambda}_{x}^{\bullet}\otimes \ql(-\delta)[-2\delta+1].
$$
With the same argument as in the first case, we have the decomposition (\ref{vc decomp}), hence
$$
\bigoplus_{i=0}^{2\delta_{\gamma}} \big(R^{\delta-1}\Phi_{\bar{\eta}_0}j_{V})_{!*}\cf_{V}^{i}\big)_{x}[-i]
=\boldsymbol{\Lambda}_{x}^{\bullet}\otimes \ql(-\delta)[-2\delta+1],
$$
where we have used the equality $\dim(V)=\delta$. Compare the terms of the same degree on both sides, we get the second assertion.

\end{proof}

\begin{rem}\label{local acyclicity intrinsic}

The results above are intrinsic to the singularities of $\cc_{x}$. Indeed, by the openness of versality, the restriction of the family $\pi:\cc\to \cb$ to the formal neighbourhood $\widehat{\cb}_{x}$ of $x$ in $\cb$ defines a versal deformation for the singularities of the fiber $\cc_{x}$.
More precisely, let $\{c_{i}\}_{i\in I_{x}}$ be the set of singularities of $\cc_{x}$, let $R_{i}=\widehat{\co}_{\cc_{x}, c_{i}}$ and $\widehat{\phi}_{i}:\widehat{C}_{i}\to \widehat{B}_{i}$ be the miniversal deformation of the singularity $\spf(R_{i})$.
Then there exists a smooth morphism $h: \widehat{\cb}_{x}\to \prod_{i\in I_{x}}\widehat{B}_{i}$ such that the family $\cc\times_{\cb}\widehat{\cb}_{x}\to \widehat{\cb}_{x}$ is the pull-back of the family $\prod_{i\in I_{x}}\widehat{\phi}_{i}$ along $h$.
For each family $\widehat{\phi}_{i}$, we have the canonical Whitney stratification and we can make transversal slice $V_{i}$ to the stratum containing the origin as in the theorem. Then the restriction of $\pi$ to the transversal slice $V$ is essentially the same as the product of the restriction of $\widehat{\phi}_{i}$ to $V_{i}$, similar factorization holds for $\cf^{1}\cong R^{1}\pi^{\sm}_{*}\ql$ and also the complex $j_{!*}\cf^{i}$ as $\cf^{i}=\bigwedge^{i}\cf^{1}$.

\end{rem}

\subsection{Reduction to the nearby fiber}\label{first reduction}

Following the general procedure described in the appendix \S\ref{iterated milnor}, we make a microlocal analysis of the fiber $(j_{!*}R^{i}f_{*}^{\sm}\ql)_{0}$.
Note that $\dim(\cb)=\tau_{\gamma}\ge \delta_{\gamma}$, and the equality holds if and only if the singularity $\spf(\co[\gamma])$ is a node, i.e. $\gamma$ is of the form $\gamma=\diag(\gamma_{1},\gamma_{2})\in \ggl_{2}(\co)$ with $\val(\gamma_{1}-\gamma_{2})=1$. 
Indeed, the $\delta$-stratum containing $0$ is of codimension $\delta_{\gamma}$ in $\cb$ according to Diaz-Harris \cite{dh}, hence in case $\tau_{\gamma}=\delta_{\gamma}$ the singularity $\spf(\co[\gamma])$ will be a node.
The geometry of $\xx_{\gamma}$ is clear for such elements, and we will therefore skip this case and assume that $\tau_{\gamma}>\delta_{\gamma}$. 
As there are only finitely many $\delta$-strata, the linear subspaces of $\cb=\ba^{\tau_{\gamma}}$ of dimension $\delta_{\gamma}+1$ which intersects transversally with the tangent cone at $0$ of all the $\delta$-strata form a dense open subvariety of the Grassmannian $\gr_{\delta_{\gamma}+1, \tau_{\gamma}}$.
Take such a linear subspace $H_{0}$ and cut off the locus where its intersection with the $\delta$-strata is not transversal, we get a dense open subscheme $\cb'$ of it. By construction, $\cb'$ intersects transversally with all the $\delta$-strata and it contains $0$. 
Let $\cc'=\cc\times_{\cb}\cb'$, consider the restriction 
$$
f':\overline{\cp}{}'=\overline{\pic}{}^{\,0}_{\cc'/\cb'}\to \cb'.
$$

\begin{prop}\label{support transverse}

The subspace $\overline{\cp}{}'$ is nonsingular over $\bc$, and Ng\^o's support theorem holds for the family $f'$, i.e.
\begin{equation*}
Rf'_{*}\ql=\bigoplus_{i=0}^{2\delta_\gamma} j'_{!*}(R^{\,i}{f'}^{\sm}_{*}\ql)[-i],
\end{equation*}
where $j':(\cb')^{\sm}\to \cb'$ is the natural inclusion. 

\end{prop}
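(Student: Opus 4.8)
The plan is to repeat, essentially verbatim, the argument used for Proposition~\ref{support v}, since $\cb'$ has been built so as to share the two features of the transversal slice $V$ that were exploited there: it is a smooth subvariety of $\cb=\ba^{\tau_{\gamma}}$ containing $0$, and it meets every $\delta$-stratum $\cb_{\delta}$ transversally. In particular $\cb'$ falls squarely under the hypotheses of Remark~\ref{ngo support extended}, and the task is simply to unwind that remark in the present situation.

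First I would check that $\overline{\cp}{}'$ is nonsingular over $\bc$. Fix a point $z\in \cb'\cap\Delta$ and put $\delta=\delta(\cc_{z})$. By versality of $\pi$ along $\Delta$ (openness of versality) together with Diaz--Harris \cite{dh}, the tangent cone of $\cb_{\delta}$ at $z$ is the linear subspace $V(\cc_{z})\subset T_{z}\cb$, of codimension $\delta$; by construction $T_{z}\cb'$ is transversal to it. The Fantechi--G\"ottsche--van Straten criterion (theorem~\ref{picard smooth local}), applied to the formal completion of $\cb'$ at $z$ mapping to $\cb$, then shows that $\overline{\cp}{}'$ is smooth along the fibre $(f')^{-1}(z)$; and over $(\cb')^{\sm}$ the morphism $f'$ is itself smooth. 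Hence $\overline{\cp}{}'$ is nonsingular over $\bc$.

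With smoothness of the total space in hand, $Rf'_{*}\ql$ is pure (Weil~II \cite{weil2}), so the decomposition theorem \cite{bbdg} applies. To upgrade this to the precise shape of Ng\^o's support theorem, one needs, beyond purity, the polarized abelian-fibration structure on $f'$ (inherited from $f$ by base change, with the relative Picard stack acting fibrewise) and the $\delta$-regularity estimate $\codim_{\cb'}(\cb'\cap\cb_{\delta})\ge\delta$; the latter is immediate, since transversality gives $\codim_{\cb'}(\cb'\cap\cb_{\delta})=\codim_{\cb}(\cb_{\delta})\ge\delta$, the last inequality being Severi's inequality (cf.\ \cite{dh}). Feeding these inputs into Ng\^o's argument exactly as in the proof of Proposition~\ref{support v} yields the asserted decomposition $Rf'_{*}\ql=\bigoplus_{i=0}^{2\delta_{\gamma}}j'_{!*}(R^{\,i}{f'}^{\sm}_{*}\ql)[-i]$.

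The only point that really requires attention — and it is essentially the bookkeeping of which the proof consists — is verifying that \emph{no} hypothesis of Ng\^o's support theorem is lost on restriction to the transversal slice: smoothness of the total space (handled by theorem~\ref{picard smooth local} via transversality to $V(\cc_{z})$), purity, pure-dimensionality of the base, the codimension/$\delta$-regularity bound, and the restriction of the polarization. I do not expect a genuine obstacle here; the substance is simply to observe that transversality of $\cb'$ to all the $\delta$-strata transports each of these conditions from $\cb$ to $\cb'$.
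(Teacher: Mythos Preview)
Your proposal is correct and follows essentially the same approach as the paper's own proof: use transversality of $\cb'$ to the $\delta$-strata together with theorem~\ref{picard smooth local} to obtain smoothness of $\overline{\cp}{}'$, deduce purity of $Rf'_{*}\ql$, observe that Severi's inequality is preserved under the transversal slice, and then invoke the reasoning behind theorem~\ref{ngo support} (as in remark~\ref{ngo support extended}). The only minor remark is that $V(\cc_{z})$ is by definition a \emph{union} of linear subspaces (one per singular point of $\cc_{z}$), not literally the tangent cone of $\cb_{\delta}$ at $z$; but since $\cb'$ is generic of dimension $\delta_{\gamma}+1$ and each piece of $V(\cc_{z})$ has codimension at most $\delta_{\gamma}$, transversality to $V(\cc_{z})$ holds all the same, and neither your argument nor the paper's is affected.
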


\begin{proof}

By construction, for any point $z\in \cb'\cap \Delta$, $T_{z}(\cb')$ is transversal to the subspace $V(\cc_{x})$ of $T_{x}(\cb)$.
By theorem \ref{picard smooth local}, $\overline{\cp}'$ is nonsingular over $\bc$.
As a consequence, the complex $Rf'_{*}\ql$ is pure. To get the support theorem, note that Severi's inequality continues to hold as $\cb'$ intersects transversally with the $\delta$-strata, hence the reasoning of theorem \ref{ngo support} works in this case as well.
 
\end{proof}

Let $p:\cb'\to \ba^{1}$ be a generic projection. Let $\eta_{0}$ be the generic point of the strict Henselization $\ba^{1}_{\{0\}}$ of $\ba^{1}$ at $0$, and let $\bar{\eta}_{0}$ be a geometric point over it. Let $R\Psi_{\bar{\eta}_0}$  (resp. $R\Phi_{\bar{\eta}_0}$) be the nearby cycle (resp. vanishing cycle) functor with respect to the projection $p$.
We keep the notation $\cf^{i}$ for its restriction to $\cb'$.

\begin{prop}\label{red to nearby}

For $i=1,\cdots,2\delta_\gamma$, we have
$
(R\Phi_{\bar{\eta}_0}j'_{!*}\cf^{i})_{0}=0, 
$
and so
$$
(j'_{!*}\cf^{i})_{0}=
(R\Psi_{\bar{\eta}_0}j'_{!*}\cf^{i})_{0}, \quad i=0,\cdots, 2\delta_{\gamma}.
$$
Consequently, let $B_{0}(\ep)$ be a sufficiently small open ball around $0\in \cb=\ba^{\tau_{\gamma}}$ and $D_{0}(\ep')$   a sufficiently small open interval around $0\in \ba^{1}$, then 
\begin{equation*}
\ch^{*}(j'_{!*}\cf^{i})_{0}=H^{*}\big(\cb'\cap B_{0}(\ep)\cap p^{-1}(t), j'_{!*}\cf^{i}\big),\quad t\in D_{0}(\ep').
\end{equation*}

\end{prop}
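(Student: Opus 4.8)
The plan is to establish the vanishing $(R\Phi_{\bar\eta_0}j'_{!*}\cf^{i})_0 = 0$ for $i = 1, \dots, 2\delta_\gamma$ directly from Theorem \ref{local acyclicity}, applied not at $0$ itself but at every point of a sufficiently small punctured disc, and then to transport the conclusion to $0$ via the compatibility of intermediate extensions with proper pushforward. Concretely, after shrinking, the generic projection $p:\cb'\to\ba^1$ restricts on $p^{-1}(\bar\eta_0)$ to a family whose behaviour near $0\in\cb'$ is governed by which stratum $\cb_\beta$ of the canonical Whitney stratification the point $0$ lies in; since $\tau_\gamma > \delta_\gamma$ (the node case having been excluded), the stratum through $0$ is the closed $\delta_\gamma$-stratum, which has codimension $\delta_\gamma$ and hence cannot coincide with any strict $\delta$-stratum (those are strict $\delta$-strata $\cb_\delta^\circ$ cut out by $\delta$ ordinary double points, sitting in dimension $\delta$). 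So we are precisely in case $(1)$ of Theorem \ref{local acyclicity}: for the transversal slice $V$ through a point $x$ near $0$ we get $(R\Phi_{\bar\eta_0}(j_V)_{!*}\cf_V^{i})_x = 0$ for all $i\geq 1$.

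The first step is therefore to check that $\cb'$, being a generic $(\delta_\gamma+1)$-dimensional linear slice transversal to all the tangent cones of the $\delta$-strata at $0$, together with the generic projection $p$, realizes exactly the configuration of Theorem \ref{local acyclicity}: $\cb'$ plays the role of the transversal slice $V$ (using $\dim\cb' = \delta_\gamma + 1$, one more than the codimension of the deepest $\delta$-stratum through $0$, which matches the set-up where $d_\beta = \tau_\gamma - \dim V$), the fibers of $p$ play the role of the generic hyperplanes $H_{x'}$, and the intersection-transversality conditions imposed in constructing $\cb'$ and $p$ guarantee the inequalities \eqref{codim d}–\eqref{dim h} hold strictly away from strict $\delta$-strata. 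Next I would invoke Proposition \ref{support transverse} to write $Rf'_*\ql$ as the sum of the $j'_{!*}\cf^{i}[-i]$, apply the vanishing-cycle functor, and use the argument of Theorem \ref{local acyclicity}$(1)$ (local acyclicity of the smooth morphism $p\circ f'$, proper base change, plus the identification of $R\Phi$ with $\mathrm{Im}(T-\Id)$ on $R^{-1}\Psi$ from Theorem \ref{ic via nearby}) to conclude $(R\Phi_{\bar\eta_0}j'_{!*}\cf^{i})_0 = 0$ for $i\geq 1$. From $R\Phi = 0$ the exact triangle relating $i^*$, $R\Psi$ and $R\Phi$ (for the inclusion $i$ of the special fiber) gives $(j'_{!*}\cf^i)_0 = (R\Psi_{\bar\eta_0}j'_{!*}\cf^i)_0$; for $i = 0$ the sheaf $\cf^0 = \ql$ is constant and the identity is immediate, so it holds for all $i = 0, \dots, 2\delta_\gamma$.

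The final "consequently" is then a standard Milnor-fiber translation: by the conic structure of analytic singularities, for a sufficiently small ball $B_0(\ep)$ and a sufficiently small disc $D_0(\ep')$, the nearby-cycle stalk $(R\Psi_{\bar\eta_0}\mathcal G)_0$ of any constructible complex $\mathcal G$ computes the cohomology $H^*(\cb'\cap B_0(\ep)\cap p^{-1}(t), \mathcal G)$ for $t\in D_0(\ep')$, $t\neq 0$ — this is the comparison between the algebraic nearby cycle and the topological Milnor fiber, valid because $j'_{!*}\cf^i$ is constructible with respect to the Whitney stratification pulled back to $\cb'$ and $p$ is generic enough to be a "good" projection in the sense of Milnor. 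Combining with the previous step yields $\ch^*(j'_{!*}\cf^i)_0 = H^*(\cb'\cap B_0(\ep)\cap p^{-1}(t), j'_{!*}\cf^i)$.

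The main obstacle is the first step: making rigorous the assertion that the generic linear slice $\cb'$ and the generic projection $p$ together reproduce — near the relevant point — precisely the transversal-slice-plus-generic-hyperplane situation of Theorem \ref{local acyclicity}, so that its case $(1)$ applies uniformly. One must be careful that the genericity of $p$ is compatible with the genericity already used to define $\cb'$ (the fibers of $p$ must remain transversal to all $\delta$-strata inside $\cb'$), and that the stratum of the canonical Whitney stratification through $0$, when restricted to $\cb'$, is still "not a strict $\delta$-stratum" in the sense that triggers case $(1)$ rather than case $(2)$ — which is exactly where the hypothesis $\tau_\gamma > \delta_\gamma$ is used. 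Once that alignment is in place, the rest is a formal chase through Proposition \ref{support transverse}, Theorem \ref{ic via nearby}, proper base change, and the conic structure theorem.
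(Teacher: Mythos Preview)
Your second paragraph contains the right ingredients and essentially matches the paper's proof: decompose $Rf'_*\ql$ via Proposition \ref{support transverse}, show $(R\Phi_{\bar\eta_0}Rf'_*\ql)_0=0$ by local acyclicity of the smooth morphism $p\circ f'$ plus proper base change, and then separate the summands using the perversity argument from Theorem \ref{ic via nearby}. The final Milnor-fiber identification is likewise standard and as in the paper.

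However, the framing in your first paragraph and your ``main obstacle'' are unnecessary detours. The paper does \emph{not} invoke Theorem \ref{local acyclicity} as a black box, nor does it attempt to identify $\cb'$ with the transversal slice $V$ of that theorem---and indeed the dimensions do not match: there $V$ has codimension $d_\beta=\dim\cb_\beta$, whereas $\cb'$ has fixed dimension $\delta_\gamma+1$ regardless of which Whitney stratum contains $0$. Instead the paper applies the Fantechi--G\"ottsche--van Straten criterion (Theorem \ref{picard smooth local}) directly at the single point $0$: the tangent space at $0$ of the fiber $p^{-1}(0)\cap\cb'$ is a generic $\delta_\gamma$-dimensional subspace of $T_0\cb$, hence transversal to the codimension-$\delta_\gamma$ subspace $V(C_\gamma)$, so $p\circ f'$ is smooth on a neighbourhood of $(f')^{-1}(0)$ and local acyclicity gives $(R\Phi'_{\bar\eta_0}\ql)|_{(f')^{-1}(0)}=0$. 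No dichotomy between strict and non-strict $\delta$-strata enters, and there is nothing to ``transport from a punctured disc to $0$''---the vanishing-cycle computation takes place at $0$ from the start. Your worry about aligning the two genericities (of $\cb'$ and of $p$) thus dissolves: all that is used is that the fiber of $p$ through $0$ is a generic $\delta_\gamma$-plane in $\cb$.
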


\begin{proof}

By the choice of $p:\cb'\to \ba^{1}$, the tangent space at $0$ of its fiber at $0$ is generic of dimension $\delta_{\gamma}$ in $T_{0}(\cb)$, hence transversal to the codimension $\delta_{\gamma}$ subspace $V(C_{\gamma})$ of $T_{0}(\cb)$. 
By theorem \ref{picard smooth local}, the composite $p\circ f':\overline{\cp}'\to \ba^{1}$ is smooth at an \'etale neighborhood of $(f')^{-1}(0)$. By the local acyclicity of the smooth morphism, we have
$$
R\Phi'_{\bar{\eta}_{0}}\bq_{\ell}\big|_{ (f')^{-1}(0)}=0,
$$
where $R\Phi'_{\bar{\eta}_{0}}$ is the vanishing cycle functor for the morphism $p\circ f'$. As $f'$ is proper, by proper base change theorem, we get 
$$
(R\Phi_{\bar{\eta}_{0}}Rf'_{*}\bq_{\ell})_{0}=0.
$$
With the decomposition in proposition \ref{support transverse}, we deduce that $
(R\Phi_{\bar{\eta}_0}j'_{!*}\cf^{i})_{0}=0 
$
for all $i$ as in theorem \ref{local acyclicity}.
All the other assertions then follow. 

\end{proof}

\begin{cor}\label{reduce to nearby fiber}

With the above notations, for $i=0,\cdots, 2\delta_{\gamma}$, we have the decomposition
$$
H^{i}(\overline{P}_{C_{\gamma}}, \ql)=\bigoplus_{i'=0}^{i}H^{i-i'}\big(\cb'\cap B_{0}(\ep)\cap p^{-1}(t), j'_{!*}\cf^{i'}\big).
$$

\end{cor}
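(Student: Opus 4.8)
The plan is to combine Ng\^o's support theorem for the transversal family $f'$ (Proposition~\ref{support transverse}) with the reduction to the nearby fiber (Proposition~\ref{red to nearby}), keeping careful track of which perverse summands contribute in each cohomological degree.

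First I would observe that the fiber of $f':\overline{\cp}{}'\to \cb'$ over $0\in \cb'$ is the compactified Jacobian of $\cc'_{0}=\cc_{0}=C_{\gamma}$, so that $(\overline{\cp}{}')_{0}=\overline{P}_{C_{\gamma}}$; since $f'$ is projective, proper base change gives $R\Gamma(\overline{P}_{C_{\gamma}},\ql)=(Rf'_{*}\ql)_{0}$. Applying the decomposition of Proposition~\ref{support transverse}, taking the stalk at $0$ (an exact operation) and passing to cohomology, I obtain
$$
H^{n}(\overline{P}_{C_{\gamma}},\ql)=\bigoplus_{i'=0}^{2\delta_{\gamma}}\ch^{n-i'}\big(j'_{!*}\cf^{i'}\big)_{0},\qquad n=0,\dots,2\delta_{\gamma}.
$$

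Next I would note that, with the normalization of the intermediate extension under which $j'_{!*}\cf^{i'}$ restricts on $(\cb')^{\sm}$ to the local system $\cf^{i'}$ placed in degree $0$, the complex $j'_{!*}\cf^{i'}$ is concentrated in non-negative cohomological degrees; hence $\ch^{n-i'}(j'_{!*}\cf^{i'})_{0}=0$ whenever $i'>n$, and the summation truncates to $0\le i'\le n$. Finally, for each such $i'$ I would substitute the identification of Proposition~\ref{red to nearby}, namely
$$
\ch^{m}\big(j'_{!*}\cf^{i'}\big)_{0}=H^{m}\big(\cb'\cap B_{0}(\ep)\cap p^{-1}(t),\,j'_{!*}\cf^{i'}\big),\qquad t\in D_{0}(\ep'),
$$
valid for $B_{0}(\ep)$ a sufficiently small ball around $0\in\cb$ and $D_{0}(\ep')$ a sufficiently small interval around $0\in\ba^{1}$, with $m=n-i'$. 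Combining the three displays yields the asserted decomposition.

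The argument is essentially bookkeeping: all the substance has already been absorbed into Proposition~\ref{support transverse} (smoothness of $\overline{\cp}{}'$ via the Fantechi--G\"ottsche--van Straten criterion, Severi's inequality, and Ng\^o's support theorem) and into Proposition~\ref{red to nearby} (vanishing of the vanishing cycles at $0$ via the local acyclicity of smooth morphisms). The only point requiring a word of care is the truncation of the index $i'$ to the range $0\le i'\le n$, which rests on the support properties of the intermediate extensions recalled above; I do not expect a genuine obstacle here.
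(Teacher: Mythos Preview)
Your proposal is correct and follows exactly the route the paper takes: combine the equality $H^{*}(\overline{P}_{C_{\gamma}},\ql)=(R^{*}f'_{*}\ql)_{0}$ from proper base change with Proposition~\ref{support transverse} and Proposition~\ref{red to nearby}. The paper's own proof is a one-line citation of these ingredients; your version simply unpacks the bookkeeping (including the truncation of the index $i'$) more carefully.
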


\begin{proof}

This is a corollary of proposition \ref{support transverse} and \ref{red to nearby}, and the equality
$H^{*}(\overline{P}_{C_{\gamma}}, \ql)=(R^{*}f'_{*}\ql)_{0}.
$

\end{proof}

\subsection{Iterated fiberations of the nearby fiber}\label{iterate}

We make further analysis of the cohomologies in corollary \ref{reduce to nearby fiber} by iterated fiberations of the nearby fiber as explained in the appendix \S\ref{iterated milnor}. 
We take a generic flag of linear subspaces
$$
\{0\}= H_{\delta_{\gamma}+1}\subsetneq H_{\delta_{\gamma}}\subsetneq \cdots \subsetneq H_{2}\subsetneq H_{1}\subsetneq H_{0}, \quad \dim(H_{k})=\delta_{\gamma}+1-k \text{ for all }1\le k \le \delta_{\gamma}+1,
$$
for which the conclusions of proposition \ref{teissier local polar} and \ref{hm transversal} hold for the germs of $\cb'\cap \cb_{\beta}$  at $0$, $\beta\in \kbb$. This is possible as there are only finitely many such germs. Let $\pi_{k}:H_{0}\to \bc^{k}$ be the projection with kernel $H_{k}$. Let $t_{k}\in \bc^{k}$ be a  point sufficiently close to $0$, the projection $\pi_{k+1}$ induces an affine map 
$$
p_{k+1}:\cb'\cap B_{0}(\ep)\cap\pi_{k}^{-1}(t_{k})\to \bc^{1}.
$$ 
Its critical locus equals the intersection of $\pi_{k}^{-1}(t_{k})$ with the local polar variety $P_{0}(\cb'\cap B_{0}(\ep), H_{k+1})$ (see \S\ref{whitney regularity} for a brief review of local polar varieties), which is finite over $\bc$. The points $\{t_{k}\}_{k=1}^{\delta_{\gamma}+1}$ have been taken such that $\cb'\cap B_{0}(\ep)\cap \pi_{k+1}^{-1}(t_{k+1})$ coincides with the fiber of $p_{k+1}$ at a generic point in ${\rm Im}(p_{k+1})$. 
We use the iterated fibrations $p_{1},\cdots, p_{\delta_{\gamma}+1}$ to calculate the cohomology 
\begin{equation}\label{nearby to calculate}
\ch^{*}(j'_{!*}\cf^{i})_{0}=H^{*}\big(\cb'\cap B_{0}(\ep), j'_{!*}\cf^{i}\big).
\end{equation}
Note that the first fibration $p_{1}$ is just the projection $p$ in the previous section. By proposition \ref{red to nearby}, we have
\begin{equation*}
H^{*}\big(\cb'\cap B_{0}(\ep), j'_{!*}\cf^{i}\big)=H^{*}\big(\cb'\cap B_{0}(\ep)\cap p_{1}^{-1}(t_{1}), j'_{!*}\cf^{i}\big).
\end{equation*}
With the fibration $p_{2}:\cb'\cap B_{0}(\ep)\cap p_{1}^{-1}(t_{1})\to \bc^{1}$, the above cohomological group can be built up from 
\begin{equation*}
H^{*}\big(\cb'\cap B_{0}(\ep)\cap p_{2}^{-1}(t_{2}), j'_{!*}\cf^{i}\big)
\end{equation*}
and the vanishing cycles of the complex $j'_{!*}\cf^{i}$ with respect to $p_{2}$. By construction, the vanishing cycles are supported at the finite subscheme  $\pi_{1}^{-1}(t_{1})\cap P_{0}(\cb'\cap B_{0}(\ep), H_{2})$. This process can be iterated with the fibrations $p_{3}$, $p_{4}$ and so on until $p_{\delta_{\gamma}+1}$.

We can simplify the calculations with stratified Morse theory. Indeed, the ramification of $p_{k+1}:\cb'\cap B_{0}(\ep)\cap\pi_{k}^{-1}(t_{k})\to \bc^{1}$ can be very complicated. We replace the projection $p_{k+1}$ by a stratified Morse function 
$$
\varphi_{k+1}:\cb'\cap B_{0}(\ep)\cap\pi_{k}^{-1}(t_{k})\to \br^{1}
$$ 
with respect to the stratification of $\cb'\cap B_{0}(\ep)\cap\pi_{k}^{-1}(t_{k})$ induced from the  canonical Whitney stratification $\{\cb_{\beta}\}_{\beta\in \kbb}$, which approximates sufficiently well to the function ${\rm Re}(p_{k+1})$. Let $v_{k}$ be the smallest critical value of $\varphi_{k+1}$, then
$$
H^{*}\big(\cb'\cap B_{0}(\ep)\cap\pi_{k}^{-1}(t_{k})\cap \varphi_{k+1}^{-1}(-\infty, v_{k}),  j'_{!*}\cf^{i} \big)= H^{*}\big(\cb'\cap B_{0}(\ep)\cap p_{k+1}^{-1}(t_{k+1}), j'_{!*}\cf^{i}\big),
$$
and the cohomology group $H^{*}\big(\cb'\cap B_{0}(\ep)\cap\pi_{k}^{-1}(t_{k}),  j'_{!*}\cf^{i} \big)$ can be built up from it and the Morse groups at the critical points of $\varphi_{k+1}$. For simplicity, we denote $F^{k}=\cb'\cap B_{0}(\ep)\cap\pi_{k}^{-1}(t_{k})$.

\begin{prop}\label{morse group}

Under the above setting, let $x\in F^{k}\cap\cb_{\beta}$ be a critical point of $\varphi_{k+1}$ with critical value $v$, let $\lambda$ be the Morse index of the restriction of $\varphi_{k+1}$ to $F^{k}\cap\cb_{\beta}$.
Let $a$ be a sufficiently small positive real number.

\begin{enumerate}[topsep=0pt, itemsep=0pt, label=$(\arabic*)$]

\item

If $\cb_{\beta}$ is not one of the strict $\delta$-strata, then
$$
H^{*}\big(F^{k}_{>v+a}, F^{k}_{<v-a}; j'_{!*}\cf^{i}\big)=0,\quad \text{for all }i.
$$

\item

If $\cb_{\beta}$ is one of the strict $\delta$-strata $\cb_{\delta}^{\circ}$, then for all $i$ we have
$$
H^{q}\big(F^{k}_{>v+a}, F^{k}_{<v-a}; j'_{!*}\cf^{i}\big)=
\begin{cases}
\boldsymbol{\Lambda}_{x}^{{i-\delta}}, & \text{ if }q=\lambda+\delta,
\\
0,&\text{otherwise}.
\end{cases}
$$

\end{enumerate}

\end{prop}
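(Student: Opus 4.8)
The plan is to reduce this to the vanishing cycle computation of Theorem~\ref{local acyclicity} via stratified Morse theory, essentially applying the local Milnor fibration picture at the critical point $x$. First I would localize: by stratified Morse theory, the relative cohomology $H^{*}(F^{k}_{>v+a}, F^{k}_{<v-a}; j'_{!*}\cf^{i})$ is computed entirely in a small neighborhood of the critical point $x$, and since $j'_{!*}\cf^{i}$ is constructible with respect to the Whitney stratification $\{\cb_{\beta}\}$, the fundamental lemma of stratified Morse theory (Goresky--MacPherson) expresses this relative cohomology as the tensor product of a ``tangential'' contribution and a ``normal'' contribution. The tangential data is a Morse function on $F^{k}\cap\cb_{\beta}$ with index $\lambda$, contributing a degree shift by $\lambda$ and nothing else (a single $\ql$ placed in degree $\lambda$). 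The normal data is exactly the local Morse datum of the complex $j'_{!*}\cf^{i}$ at $x$ transverse to $\cb_{\beta}$, which by the standard identification is the stalk of the vanishing cycle complex $R\Phi(j'_{!*}\cf^{i})_{x}$ for a generic linear form vanishing at $x$ on a normal slice.

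The key point is that the normal slice to $\cb_{\beta}$ at $x$ inside $F^{k}$ is precisely (up to the genericity built into the flag, via propositions~\ref{teissier local polar} and~\ref{hm transversal}) a transversal slice $V$ of the type appearing in Theorem~\ref{local acyclicity}, and the generic linear form is the projection $p$ there. So the normal Morse datum of $j'_{!*}\cf^{i}$ at $x$ equals $(R\Phi_{\bar\eta_0}(j_V)_{!*}\cf_V^{i})_{x}$. Now I invoke Theorem~\ref{local acyclicity} directly. In case $(1)$, when $\cb_\beta$ is not a strict $\delta$-stratum, that vanishing cycle stalk is zero for all $i\ge 1$, and for $i=0$ the sheaf $\cf^0=\ql$ is constant so its intermediate extension is constant and has no vanishing cycles either; hence the normal contribution vanishes, and therefore so does the whole relative cohomology group, regardless of the tangential Morse index. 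This gives assertion $(1)$. In case $(2)$, when $\cb_\beta=\cb_\delta^{\circ}$ is a strict $\delta$-stratum, Theorem~\ref{local acyclicity}$(2)$ tells us the only nonzero vanishing cycle is $(R^{\delta-1}\Phi_{\bar\eta_0}(j_V)_{!*}\cf_V^{i})_{x}=\boldsymbol{\Lambda}_x^{i-\delta}\otimes\ql(-\delta)$, concentrated in cohomological degree $\delta-1$ relative to the slice $V$ of dimension $\delta$, i.e. after the appropriate degree bookkeeping the normal Morse datum sits in degree $\delta$. Tensoring with the tangential datum in degree $\lambda$, the Morse group $H^{q}(F^{k}_{>v+a},F^{k}_{<v-a};j'_{!*}\cf^{i})$ is $\boldsymbol{\Lambda}_x^{i-\delta}$ in degree $q=\lambda+\delta$ and zero otherwise, which is exactly assertion $(2)$.

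The main obstacle I anticipate is the careful bookkeeping of shifts and the precise identification of the ``normal slice inside $F^{k}$'' with the transversal slice $V$ of Theorem~\ref{local acyclicity}: one must check that cutting by the generic flag $H_{\delta_\gamma+1}\subsetneq\cdots\subsetneq H_0$ and then restricting to $\pi_k^{-1}(t_k)$ produces, at the critical point $x$, a slice transversal to $\cb_\beta$ of the right dimension, and that the induced projection $\varphi_{k+1}$ (approximating $\mathrm{Re}(p_{k+1})$) restricts on this slice to a generic linear form whose vanishing cycle functor is the one computed in Theorem~\ref{local acyclicity}. This is where propositions~\ref{teissier local polar} and~\ref{hm transversal}, together with remark~\ref{local acyclicity intrinsic} (which says the whole computation is intrinsic to the singularities of $\cc_x$), do the work: they guarantee the local polar varieties are generic so that the iterated fibration really does see the slice transversally and the critical points are isolated with the expected normal data. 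Once this geometric matching is in place, the statement follows by combining the fundamental lemma of stratified Morse theory with Theorem~\ref{local acyclicity} term by term; there are no further analytic difficulties, only the verification that the genericity hypotheses already imposed on the flag are exactly what is needed.
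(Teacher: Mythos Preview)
Your proposal is correct and follows essentially the same approach as the paper: both reduce the Morse group to a vanishing cycle on a normal slice via Goresky--MacPherson (\cite{gm morse}, Part II, 6.4 and 6.A), then identify that slice with the transversal slice $V$ of Theorem~\ref{local acyclicity} by appealing to openness of versality and Remark~\ref{local acyclicity intrinsic}. Your write-up is in fact slightly more explicit than the paper's about the tangential--normal decomposition and the resulting degree shift by $\lambda$, which the paper leaves implicit in its citation of Goresky--MacPherson.
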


\begin{proof}

According to Goresky and MacPherson \cite{gm morse}, part II, 6.4 and the appendix 6.A, to calculate the Morse group, we take a transversal slice $N$ to $F^{k}\cap \cb_{\beta}$ in $F_{k}$ and a generic projection $p:N\to \bc^{1}$ sending $x$ to $0$, then the Morse group in question is equal to the vanishing cycle 
\begin{equation}\label{morse group to calculate}
\big(R^{d_{N}-1}\Phi_{\bar{\eta}_{0}} j'_{!*}\cf^{i}\big)_{x},
\end{equation} 
where $R\Phi_{\bar{\eta}_{0}}$ is the vanishing cycle for the projection $p$ at the point $0$ and $d_{N}$ is the dimension of $N$.  
Note that the projection $p_{k}$ and the point $t_{k}$ have been chosen generic, hence the fiber $F^{k}$ intersects transversally with the discriminant $\Delta$. 
Consequently, for any point $x'\in F^{k}\cap \Delta$, the restriction of $\pi:\cc\to \cb$ to the formal neighbourhood of $x'$ in $F^{k}$ defines a versal deformation of the singularities of $\cc_{x'}$ by the openness of versality. 
As explained in remark \ref{local acyclicity intrinsic}, this implies that the restriction of the complex $j'_{!*}\cf^{i}$ to the transversal slice $N$ depends only on the singularities of $\cc_{x}$, and it is essentially the same as the restriction of $j_{!*}\cf^{i}$ to the transversal slice $V$ in theorem \ref{local acyclicity}, as both can be reduced to transversal slices in the miniversal deformations of the singularities of $\cc_{x}$. In particular, the vanishing cycle (\ref{morse group to calculate}) equals that of theorem \ref{local acyclicity} and the assertion follows from the theorem.

\end{proof}

\begin{rem}\label{patch up morse groups}

In conclusion, the cohomology groups 
$$
H^{*}(F^{k}, j'_{!*}\cf^{i})=H^{*}\big(\cb'\cap B_{0}(\ep)\cap\pi_{k}^{-1}(t_{k}), j'_{!*}\cf^{i}\big), \quad k=1,\cdots, \delta_{\gamma}+1,
$$
can be calculated inductively, starting from $H^{*}(F^{\delta_{\gamma}+1}, j'_{!*}\cf^{i})$ and arriving at $$H^{*}(F^{1}, j'_{!*}\cf^{i})=H^{*}\big(\cb'\cap B_{0}(\ep)\cap\pi_{1}^{-1}(t_{1}), j'_{!*}\cf^{i}\big)=\ch^{*}(j'_{!*}\cf^{i})_{0}.
$$
With stratified Morse theory, the cohomology $H^{*}(F^{k}, j'_{!*}\cf^{i})$ can be built up from
$$
H^{*}\big(F^{k}\cap \varphi_{k+1}^{-1}(-\infty, v_{k}),  j'_{!*}\cf^{i} \big)=H^{*}(F^{k+1}, j'_{!*}\cf^{i})
$$
and the Morse groups $H^{*}\big(F^{k}_{>v+a}, F^{k}_{<v-a}; j'_{!*}\cf^{i}\big)$ at the critical values $v$ via the long exact sequence
$$
\cdots \to H^{*}\big(F^{k}_{<v-a}, j'_{!*}\cf^{i}\big) \to  H^{*}\big(F^{k}_{>v+a},  j'_{!*}\cf^{i}\big) \to H^{*}\big(F^{k}_{>v+a}, F^{k}_{<v-a}; j'_{!*}\cf^{i}\big) \to \cdots.
$$
To actually carry out this process, we still need to know how to attach
$$H^{q}\big(F^{k}_{>v+a}, F^{k}_{<v-a}; j'_{!*}\cf^{i}\big) \to H^{q+1}\big(F^{k}_{<v-a}, j'_{!*}\cf^{i}\big), \quad \text{for all }q,
$$ 
which seems to be out of reach now. 
Nonetheless, the above proposition tells us that the Morse groups are non-trivial only over the strict $\delta$-strata, which is enough for our purpose.

\end{rem}

\section{Whitney regularity along the equisingular strata}

Based on a study of the geometry of the discriminant $\Delta$, we will show that the union of the strict $\delta$-strata is Whitney regular over the equisingular strata. In conjunction with the microlocal analysis of the complexes $j_{!*}\cf^{i}$, this implies that the sheaves $\ch^{*}(j_{!*}\cf^{i})$ are locally constant over the equisingular strata.

\subsection{The geometry of the discriminant}

Teissier \cite{teissier hunting} has discovered a very remarkable property of the discriminant of the miniversal deformation of isolated hypersurface singularities, which is further extended to the isolated complete intersection singularities by Looijenga \cite{looij icis}. We make a recount of their result.

Let $(X_{0},0)\subset (\bc^{n}, 0)$ be a germ of isolated complete intersection singularity.
Let $\defm_{(X_{0},0)\hookrightarrow (\bc^{n},0)}$ be the deformation functor for the embedding $(X_{0},0)\hookrightarrow (\bc^{n},0)$, and let $\defm_{(X_{0},0)}$ be the deformation functor for the germ $(X_{0}, 0)$. 
Let $T^{1}_{(X_{0},0)\hookrightarrow (\bc^{n},0)}$ and $T_{(X_{0},0)}^{1}$ be their tangent spaces, i.e.
$$
T^{1}_{(X_{0},0)\hookrightarrow (\bc^{n},0)}=\defm_{(X_{0},0)\hookrightarrow (\bc^{n},0)}(\bc[\varep]/\varep^{2}), \quad T_{(X_{0},0)}^{1}=\defm_{(X_{0},0)}(\bc[\varep]/\varep^{2}).
$$
Let $I_{0}\subset \co_{\bc^{n}, 0}$ be the defining ideal of $X_{0}$, then its conormal sheaf $I_{0}/I_{0}^{2}$ is locally free, and we get an exact sequence
\begin{equation}\label{cotan exact}
0\to I_{0}/I_{0}^{2}\to \Omega_{\bc^{n}}^{1}\otimes_{\co_{\bc^{n}}}\co_{X_{0}}\to \Omega_{X_{0}}^{1}\to 0. 
\end{equation}
Let $\cn_{X_{0}/\bc^{n}}:=\ch om_{\co_{X_{0}}}(I_{0}/I_{0}^{2}, \co_{X_{0}})$, we call it the normal sheaf of $X_{0}$. 
According to the general deformation theory (cf. \cite{greuel}), we can identify the tangent space 
$$
T^{1}_{(X_{0},0)\hookrightarrow (\bc^{n},0)}=\cn_{X_{0}/\bc^{n}, 0}, 
$$
and the tangent space $T^{1}_{(X_{0},0)}$ fits in the exact sequence
\begin{equation}\label{tan exact}
0\to \Theta_{X_{0},0}\to \Theta_{\bc^{n}, 0}\otimes_{\co_{\bc^{n},0}}\co_{X_{0},0} \to \cn_{X_{0}/\bc^{n}, 0}\to T^{1}_{(X_{0},0)}\to 0,
\end{equation}
which is obtained from the exact sequence (\ref{cotan exact}) by taking dual. Note that this leads to the isomorphism
\begin{equation*}
T^{1}_{(X_{0},0)}\cong \ce xt_{\co_{X_{0},0}}^{1}(\Omega_{X_{0},0}^{1}, \co_{X_{0},0}).
\end{equation*} 
Let $f=(f_{1},\cdots,f_{k})$ be a minimal set of generators for the defining ideal $I_{0}$ of $X_{0}$, let $Df$ be the Jacobian matrix
$$
Df=\begin{bmatrix}
\frac{ \partial f_{1}}{\partial x_{1}}&\cdots&\frac{ \partial f_{1}}{\partial x_{n}}\\
\vdots &\ddots & \vdots \\
\frac{ \partial f_{k}}{\partial x_{1}} &\cdots& \frac{ \partial f_{k}}{\partial x_{n}}
\end{bmatrix}.
$$
With the exact sequence (\ref{tan exact}), we calculate
\begin{equation*}
T_{(X_{0}, 0)}^{1}=\co_{\bc^{n}, 0}^{ k}/(Df\cdot \co_{\bc^{n}, 0}^{ n}+I_{0}\cdot \co_{\bc^{n},0}^{ k}).
\end{equation*}
The $\co_{X_{0},0}$-module at the right hand side is traditionally called \emph{Tjurina module}.
Let $g_{1},\cdots, g_{\tau}\in \co_{\bc^{n}, 0}^{ k}$, $g_{i}=(g_{i, 1}, \cdots, g_{i,k})^{t}$, be a set of representatives for a basis of the above quotient as a vector space over $\bc$.
Let
$$
F_{i}(\mathbf{x}, \mathbf{t})=f_{i}(x_{1},\cdots, x_{n})+\sum_{j=1}^{\tau} t_{j} g_{i,j}(x_{1},\cdots,x_{n}),\quad i=1,\cdots, k.
$$ 
Let $I\subset \co_{\bc^{n}\times\bc^{\tau}}$ be the ideal generated by $F_{1}, \cdots, F_{k}$, and let $X$ be the closed subscheme of $\bc^{n}\times \bc^{\tau}$ defined by $I$. With the second projection $\bc^{n}\times \bc^{\tau}\to \bc^{\tau}$, we get a family 
$$
\psi: (X, 0)\to (S=\bc^{\tau},0),
$$ 
which is a miniversal deformation of $X_{0}$. One verifies that $X$ is nonsingular. 
Let $\Sigma\subset X$ be the \emph{critical locus} of the morphism $\psi$, i.e. it is the closed subscheme defined by the Fitting ideal ${\rm F}_{n-k}(\Omega^{1}_{X/S})$.  Then all the geometric fibers of $\psi$ have isolated complete intersection singularities at its intersection with $\Sigma$. 
In concrete terms, $\Sigma$ is the closed subscheme of $X$ defined by the condition that the Jacobian matrix
$$
\begin{bmatrix}
\frac{ \partial F_{1}}{\partial x_{1}}&\cdots&\frac{ \partial F_{1}}{\partial x_{n}}\\
\vdots &\ddots & \vdots \\
\frac{ \partial F_{k}}{\partial x_{1}} &\cdots& \frac{ \partial F_{k}}{\partial x_{n}}
\end{bmatrix}
$$
is of rank $<k$, i.e. all the minors of the Jacobian matrix of size $k\times k$ vanish.

At each geometric point $x\in \Sigma$, let $s=\psi(x)$, we get the Tjurina module $T^{1}_{(X_{s},x)}$. They can be put in family as $x$ varies. We start from the exact sequence  
\begin{equation*}
0\to \psi^{*}\Omega_{S}^{1}\to \Omega_{X}^{1}\to \Omega_{X/S}^{1}\to 0. 
\end{equation*}
Take its dual, we get
\begin{equation}\label{tan exact f}
\cdots\to \Theta_{X}\to \psi^{*}\Theta_{S} \to  \ce xt_{\co_{X}}^{1}(\Omega_{X/S}^{1}, \co_{X}) \to 0.
\end{equation}
Let $\ct_{X/S}^{1}=\ce xt_{\co_{X}}^{1}(\Omega_{X/S}^{1}, \co_{X})$, then 
\begin{equation}\label{base change tjurina}
\ct_{X/S}^{1}\otimes_{\co_{X}}\co_{X_{s},x}
=\ce xt_{\co_{X_{s},x}}^{1}(\Omega_{\co_{X_{s},x}}^{1}, \co_{\co_{X_{s},x}})
=T^{1}_{(X_{s},x)}.
\end{equation}
Let $F=(F_{1},\cdots,F_{k})^{t}\in \co^{k}_{\bc^{n}\times \bc^{\tau},0}$, and let $DF$ be its Jacobian matrix with respect to the variables $x_{1},\cdots, x_{n}$. The exact sequence (\ref{tan exact}) can be put in families as well, and from it we calculates
$$
\ct_{X/S}^{1}=\co_{\bc^{n}\times\bc^{\tau}, 0}^{ k}/(DF\cdot \co_{\bc^{n}\times\bc^{\tau}, 0}^{ n}+I\cdot \co_{\bc^{n}\times\bc^{\tau},0}^{ k}).
$$
The $\co_{X}$-module at the right hand side is called the \emph{relative Tjurina module} of the family $\psi:X\to S$.

Actually the above construction of $\ct_{X/S}^{1}$ works for any deformation $\psi':X'\to S'$ of $X_{0}$. From the analogue of the exact sequence (\ref{tan exact f}), for any geometric point $x\in X'$ with image $s\in S'$, we deduce a morphism 
$$
\Theta_{S',s}\xrightarrow{\psi^{-1}} (\psi^{*}\Theta_{S'})_{x} \to \ct^{1}_{X'/S',x}.
$$
Modulo the maximal ideal $\km_{S',s}$ of $\co_{S',s}$, we get the \emph{Kodaira-Spencer} map
$$
\kappa_{x}: T_{s}S'\to T_{(X'_{s},x)}^{1} 
$$
It is known that $\kappa_{x}$ is surjective if $\psi'$ is versal and isomorphic if $\psi'$ is miniversal (cf. \cite{greuel}).

Let $\Delta\subset S$ be the image of $\Sigma$, called the \emph{discriminant} of the morphism  $\psi$, it is the closed subscheme defined by the Fitting ideal ${\rm F}_{0}(\psi_{*}\co_{\Sigma})$.

\begin{thm}[Purity of the discriminant, Teissier \cite{teissier hunting}]

Let $\psi:(X,0)\to (S,0)$ be a miniversal deformation of an isolated complete intersection singularity as above. Then $\Delta\subset (S,0)$ is a hypersurface which is locally analytically irreducible at $0$, and the induced morphism $\Sigma\to \Delta$ is the normalization map.

\end{thm}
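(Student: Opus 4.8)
The plan is to deduce the statement from four facts, which I will establish in turn:
\emph{(a)} the critical space $\Sigma$ is smooth over $\bc$ of pure dimension $\tau-1$;
\emph{(b)} the restriction $\psi|_{\Sigma}\colon\Sigma\to S$ is finite;
\emph{(c)} its image $\Delta$ is an analytically irreducible hypersurface in $(S,0)$;
\emph{(d)} $\Sigma\to\Delta$ is generically injective, i.e.\ of degree one.
Granting these, $\Sigma\to\Delta$ is a finite morphism from the normal --- indeed smooth --- germ $\Sigma$, birational onto the reduced irreducible hypersurface $\Delta$; the direct image of $\co_{\Sigma}$ is then a finite $\co_{\Delta}$-algebra, with the same fraction field as $\co_{\Delta}$ and integrally closed in it, hence equal to the integral closure of $\co_{\Delta}$, so $\Sigma\to\Delta$ is the normalization morphism. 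Together with \emph{(a)} and \emph{(c)} this gives all the assertions.

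Facts \emph{(b)} and \emph{(c)} will follow easily from \emph{(a)}. Since $X_{0}=\psi^{-1}(0)$ has an isolated singularity at $0$, one has $\Sigma\cap X_{0}=\mathrm{Sing}(X_{0})=\{0\}$, so $\psi|_{\Sigma}$ has finite fibre over $0$, and the finiteness theorem for morphisms of analytic germs yields \emph{(b)}. Then $\Delta=\psi(\Sigma)$ is a closed analytic germ, image of the smooth --- in particular irreducible --- germ $\Sigma$ under a finite map, hence irreducible. By \emph{(a)} and \emph{(b)}, $\dim\Delta=\dim\Sigma=\tau-1=\dim S-1$, and an irreducible reduced closed subgerm of codimension one in the regular local germ $(S,0)\cong(\bc^{\tau},0)$ is principal because $\co_{S,0}$ is a unique factorisation domain; so $\Delta$ is a hypersurface. (That the scheme $\mathrm{F}_{0}(\psi_{*}\co_{\Sigma})$ is itself reduced may be checked separately, or one works throughout with $\Delta_{\mathrm{red}}$.)

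The core is \emph{(a)}, the smoothness of $\Sigma$. By openness of versality, the germ of $\psi$ at any point $x_{0}\in\Sigma$, with $s_{0}=\psi(x_{0})$, is a versal deformation of the isolated complete intersection singularity $(X_{s_{0}},x_{0})$, hence --- after a smooth base change onto the base of a miniversal deformation of $(X_{s_{0}},x_{0})$ --- the pullback of that miniversal deformation; since formation of the critical space commutes with base change and smooth base changes preserve smoothness, it suffices to show that $\Sigma$ is smooth at the origin for a miniversal deformation of $(X_{0},0)$. There I will use two estimates. First, $\Sigma$ is cut out inside the smooth space $X$, of dimension $n+\tau-k$, by the ideal of maximal minors of the relative Jacobian $DF$, a $k\times n$ matrix; this determinantal locus has codimension at most the expected value $n-k+1$, so $\dim_{0}\Sigma\ge(n+\tau-k)-(n-k+1)=\tau-1$. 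Second, a direct computation of the Zariski tangent space will give $\dim T_{0}\Sigma\le\tau-1$, using that the perturbations $g_{i,j}$ project onto a $\bc$-basis of the Tjurina module $T^{1}_{(X_{0},0)}$ --- which is exactly miniversality, i.e.\ bijectivity of the Kodaira--Spencer map $\kappa_{0}$. In the hypersurface case $k=1$, this amounts to the fact that the $1$-jets at $0$ of $g_{1},\dots,g_{\tau}$ together with the $1$-jets of $\partial_{x_{1}}f,\dots,\partial_{x_{n}}f$ span $\co_{\bc^{n},0}/\km^{2}$, which is immediate from $f\in\km^{2}$ and the fact that $g_{1},\dots,g_{\tau}$ represent a basis of $\co_{\bc^{n},0}/(f,\partial_{x_{1}}f,\dots,\partial_{x_{n}}f)$; the general complete intersection case is the analogous computation on the relative Tjurina module, as carried out in Looijenga \cite{looij icis} and going back to Teissier \cite{teissier hunting}. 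Since $\dim_{0}\Sigma\le\dim T_{0}\Sigma$ always, the two estimates force $\dim_{0}\Sigma=\dim T_{0}\Sigma=\tau-1$, so $\Sigma$ is smooth at $0$. I expect this tangent-space computation --- the clean extraction of the needed surjectivity from miniversality in the general complete intersection setting --- to be the main obstacle; the remainder of the argument is formal.

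Finally, for \emph{(d)}: over the complement of a proper closed analytic subset of $\Delta$, the fibre $X_{s}$ has a single singular point, an ordinary node $A_{1}$, because the locus in $S$ over which some fibre has two singular points, or one singularity strictly worse than $A_{1}$, is closed of codimension $\ge2$ in $S$, hence of codimension $\ge1$ in $\Delta$. (In the hypersurface case this is the genericity of Morse perturbations; in general it is a standard feature of miniversal deformations of isolated singularities.) Hence for such $s$ the set $\psi|_{\Sigma}^{-1}(s)=\mathrm{Sing}(X_{s})$ is a single reduced point, so $\Sigma\to\Delta$ has degree one. This proves \emph{(d)} and completes the proof.
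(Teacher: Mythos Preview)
Your overall outline matches the paper's: show $\Sigma$ is regular enough, $\psi|_\Sigma$ finite, $\Delta$ an irreducible hypersurface, and the map generically of degree one. The gap is in step \emph{(a)}. The claim that $\Sigma$ is \emph{smooth} holds only in the hypersurface case $k=1$; for $k\ge 2$ it is false, and the proposed Zariski tangent-space bound cannot hold. After passing to a minimal embedding one has $Df(0)=0$; the generators of the ideal of $\Sigma$ in $X$ are the $k\times k$ minors of $DF$, and each first partial of such a minor is (up to sign) a $(k{-}1)\times(k{-}1)$ minor, hence also vanishes at $0$. Thus $T_0\Sigma=T_0X$ has dimension $n-k+\tau>\tau-1$. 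For instance, with $n=k=2$ and $f_1=x^2,\ f_2=y^2$ one computes $\tau=4$, and in coordinates $(x,y,t_2,t_4)$ on $X\cong\bc^4$ the critical locus is $\{4xy-t_2t_4=0\}$, a quadric cone --- normal but singular at $0$. What Looijenga proves for general ICIS is normality of $\Sigma$, not smoothness; your deferral to \cite{looij icis} does not rescue the tangent-space argument.

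The paper therefore argues differently for \emph{(a)}: it shows $\Sigma$ is Cohen--Macaulay via a Buchsbaum--Rim depth argument, and then proves that the miniversal $\psi$ is infinitesimally stable (equivalently, the Kodaira--Spencer map is surjective at every point), so by Mather the tangent map $d\psi\colon X\to\Hom(\bc^{n-k+\tau},\bc^\tau)$ is transversal to the rank stratification. This confines the singular locus of $\Sigma$ to the Thom stratum $\overline{\Sigma_{\tau-2}}$, of codimension $n-k+3\ge 3$ in $\Sigma$, whence $R_1$ and normality by Serre's criterion. Your steps \emph{(b)}--\emph{(d)} are essentially the paper's and go through once ``smooth'' in \emph{(a)} is weakened to ``normal''. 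For the paper's actual application to plane curve singularities ($k=1$) your argument for \emph{(a)} is correct and coincides with the paper's subsequent remark that in the hypersurface case $d\psi$ has rank $\ge\tau-1$ everywhere, so $\Sigma=\Sigma_{\tau-1}$ is indeed smooth.
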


\begin{proof}

We give a sketch of the proof.
From the above concrete description of $\psi$, one deduce that $X$ is regular and it is of dimension $n-k+\tau$, and that the restriction $\psi:\Sigma\to \Delta$ is a finite morphism. In particular, $\dim(\Sigma)=\dim(\Delta)$. As the generic fiber of $\psi$ is smooth, we obtain that $\dim(\Delta)\le \tau-1$. 
On the other hand, for any geometric point $x\in X$, take a local coordinate of $X$ at $x$, the tangent map induces a morphism $\rd \psi_{x}: (X,x)\to \Hom(\bc^{n-k+\tau}, \bc^{\tau})$. 
For $m,d\in \bn$ and $r=0, \cdots,\min(m,d)$, let 
$$
\co_{r}(m,d)=\big\{A\in \Hom(\bc^{m}, \bc^{d})\mid \rk(A)=r\big\}. 
$$
Consider the action of $\gl_{m}\times\gl_{d}$ on $\Hom(\bc^{m}, \bc^{d})$, one verifies that $\co_{r}(m,d)$ is a smooth irreducible affine algebraic variety of codimension $(m-r)(d-r)$ in $\Hom(\bc^{m}, \bc^{d})$, and that the Zariski closure satisfies $\overline{\co}_{r}(m,d)=\bigsqcup_{r'=0}^{r}\co_{r'}(m,d)$. 
Then the germ $(\Sigma,x)$ is the inverse image of $\overline{\co}_{\tau-1}(n-k+\tau,\tau)$ under the morphism $\rd \psi_{x}$. 
Hence the codimension of $(\Sigma,x)$ in $(X, x)$ is at most $n-k+1$, i.e. $\dim(\Sigma)\ge \tau-1$. Combining the two inequalities, we obtain that both $\Sigma$ and $\Delta$ are of pure dimension $\tau-1$. In particular, $\Delta$ is a hypersurface in $(\bc^{\tau},0)$.

On verifies then that $\Sigma$ is Cohen-MaCaulay. 
With the concrete description of $\psi$, one verifies that ${\rm F}_{n-k}(\Omega_{X/S}^{1})={\rm F}_{0}(\ct_{X/S}^{1})$.
As $X$ is regular and $\Sigma$ is of pure codimension $n-k+1$, we have
\begin{equation}\label{depth equality}
{\rm depth}\big({\rm F}_{0}(\ct_{X/S, x}^{1}), \co_{X,x}\big)=n-k+1. 
\end{equation}
By definition, $\ct_{X/S}^{1}$ has presentation
$$
\co_{X}^{n}\xrightarrow{DF} \co_{X}^{k}\to \ct_{X/S}^{1}\to 0.  
$$
According to Buchsbaum-Rim \cite{br}, Corollary 2.7, the equality (\ref{depth equality}) implies  that the homological dimension ${\rm hd}_{\co_{X,x}}(\co_{\Sigma,x})$ is $n-k+1$. As $X$ is regular, we have the equality of Auslander-Buchsbaum
$$
{\rm hd}_{\co_{X,x}}(\co_{\Sigma,x})+{\rm depth}_{\co_{X,x}}(\co_{\Sigma,x})=\dim(X),
$$
whence ${\rm depth}_{\co_{X,x}}(\co_{\Sigma,x})=\tau-1=\dim(\Sigma)$ and so $\Sigma$ is Cohen-MaCaulay.

By Serre's criteria of normality, we need to verify that the singular locus of $\Sigma$ is of codimension at least $2$. For this we introduce the \emph{Thom strata}: Let $\Theta_{X}$ and $\Theta_{S}$ be the sheaves of holomorphic vector fields on $X$ and $S$ respectively, the pull-back and the tangent map induces the morphisms of sheaves
$$
\psi^{-1}: \Theta_{S}\to \psi^{*}(\Theta_{S})\quad\text{and}\quad t\psi: \Theta_{X}\to \psi^{*}\Theta_{S}.
$$

\begin{lem}\label{inf stable}
The miniversal deformation $\psi$ is \emph{infinitesimally stable}, i.e. for any point $x\in X$ with image $s$, we have
\begin{equation*}
\psi^{*}(\Theta_{S, s})=\psi^{-1} (\Theta_{S, s})+t\psi(\Theta_{X,x}).
\end{equation*}

\end{lem}

\begin{proof}

Let $\km_{x}\subset \co_{X, x}$ be the maximal ideal at $x$ and $\km_{s}\subset \co_{S,s}$ the maximal ideal at $s$. We identify $\km_{s}$ with its image in $\co_{X,x}$ via the morphism $\co_{S,s}\to \co_{X,x}$. 
We claim that the assertion is equivalent to 
\begin{equation}\label{inf stable 1}
\psi^{*}(\Theta_{S, s})=\psi^{-1} (\Theta_{S, s})+t\psi(\Theta_{X,x})+\km_{s}\psi^{*}(\Theta_{S, s}).
\end{equation}
Indeed, the above equation is equivalent to
$$
\frac{\psi^{*}(\Theta_{S, s})}{t\psi (\Theta_{X,x})}=\frac{\psi^{-1} (\Theta_{S, s})+t\psi(\Theta_{X,x})}{t\psi(\Theta_{X,x})}+\frac{\km_{s}\psi^{*}(\Theta_{S, s})+t\psi(\Theta_{X,x})}{t\psi(\Theta_{X,x})}. 
$$
Let $M=\frac{\psi^{*}(\Theta_{S, s})}{t\psi (\Theta_{X,x})}$, the equation can be rewritten as
\begin{equation}\label{inf stable 2}
M=\frac{\psi^{-1} (\Theta_{S, s})+t\psi(\Theta_{X,x})}{t\psi(\Theta_{X,x})}+\km_{s}M.
\end{equation}
One verifies that $M$ is an $\co_{S,s}$-module of finite type. By Nakayama's lemma, the above equation is equivalent to
$$
M=\frac{\psi^{-1} (\Theta_{S, s})+t\psi(\Theta_{X,x})}{t\psi(\Theta_{X,x})},
$$
which is exactly the equation in the lemma.

For the equation (\ref{inf stable 1}) or equivalently (\ref{inf stable 2}), it is enough to verify the surjectivity of 
\begin{equation}\label{inf stable 3}
T_{s}S\xrightarrow{\psi^{-1}} M/\km_{s}M.
\end{equation}
Note that
$$
M/\km_{s}M=\frac{\psi^{*} (\Theta_{S, s})}{\km_{y}\psi^{*} (\Theta_{S, s})+t\psi(\Theta_{X,x})}=T_{(X_{s},x)}^{1},
$$
and that the morphism (\ref{inf stable 3}) is just the Kodaira-Spencer map, here we have used the exact sequence (\ref{tan exact f}) and the isomorphism (\ref{base change tjurina}). Now the surjectivity of (\ref{inf stable 3}) follows from the versality of $\psi$.

\end{proof}

According to Mather \cite{mather stable 2}, the morphism $\psi$ is \emph{stable}, i.e. it is analytically equivalent to any sufficiently small perturbation of it. This implies that the tangent map $\rd\psi_{x}:(X, x)\to \Hom(\bc^{n-k+\tau}, \bc^{\tau})$ is transversal to the stratification $\Hom(\bc^{n-k+\tau}, \bc^{\tau})=\bigsqcup_{r=0}^{\tau}\co_{r}(n-k+\tau, \tau)$. 
Consequently, for $r=0,\cdots, \tau-1$, the inverse image
$$
\Sigma_{r,x}=(\rd \psi_{x})^{-1}\big(\co_{r}(n-k+\tau, \tau)\big)
$$ 
is smooth. $\Sigma_{r,x}$ is clearly independent of the local coordinate of $X$ at $x$, hence when $x$ runs through points on $X$  they glue together to a smooth locally closed subscheme $\Sigma_{r}$ of $X$, called \emph{Thom strata}.
It is clear that $\Sigma$ equals the Zariski closure of $\Sigma_{\tau-1}$. As $\Sigma_{\tau-1}$ is itself smooth, the singularity of $\Sigma$ is contained in the closure of $\Sigma_{\tau-2}$, which is of codimension $2(n-k+2)-(n-k+1)=n-k+3\ge 3$ in $\Sigma$. This finishes the proof that $\Sigma$ is normal.

As the fibers of $\psi$ have at worst isolated complete intersection singularities and the singularities can be deformed independently, the fibers of $\psi$ over the generic point of each local irreducible component of $(\Delta,0)$ can have only a unique singularity. 
Moreover, for any point $x$ of $\Sigma_{\tau-1}$, one verifies by the openness of versality that the singularity $(X_{\psi(x)}, x)$ must be ordinary quadratic.
With the deformation theory of ordinary quadratic singularity, the restriction of $\psi$ to $\Sigma_{\tau-1}$ must be analytically locally an isomorphism. 
Hence $\psi$ is generically an isomorphism over each local irreducible component of $(\Delta, 0)$.
The normality of $\Sigma$ implies that it is locally analytically irreducible, this implies that $\Delta$ is locally analytically irreducible at $0$ as well.  
As $\psi$ is finite generically an isomorphism and $\Sigma$ is normal, $\psi$ must be the normalization.

\end{proof}

The Thom stratification inspires the following construction: 
For $m,d\in \bn$ with $m>d$, let 
$$
\cn_{m,d}=\big\{A\in \Hom(\bc^{m}, \bc^{d})\mid \rk(A)<d\big\}, 
$$
which is also the Zariski closure of ${\co_{d-1}(m,d)}$. It admits a resolution called \emph{Springer resolution}:
Let $\widetilde{\cn}_{m,d}=\big\{(M, H)\in \Hom(\bc^{m}, \bc^{d})\times \check{\bp}^{d-1}\mid {\rm Im}(M)\subset H\big\}$, where $\check{\bp}^{d-1}$ parametrizes the hyperplanes in $\bc^{d}$. With the projection $\widetilde{\cn}_{m,d}\to \check{\bp}^{d-1}$, one verifies that $\widetilde{\cn}_{m,d}$ is smooth irreducible. Hence the projection $\widetilde{\cn}_{m,d}\to \cn_{m,d}$ is a resolution of singularity as it is proper and generically isomorphic. We define $\widetilde{\Sigma}$ with the Cartesian diagram
$$
\begin{tikzcd}
\widetilde{\Sigma}\arrow[r] \arrow[d]\arrow[dr, phantom, "\square"]& \widetilde{\cn}_{n-k+\tau,\tau}\arrow[d]\\
X\arrow[r, "\rd\psi"]&\Hom(\bc^{n-k+\tau}, \bc^{\tau}).
\end{tikzcd}
$$
As $X$ is transversal to the stratification of $\Hom(\bc^{n-k+\tau}, \bc^{\tau})$ by the rank, the fiber product $\widetilde{\Sigma}$ is smooth and the resulting morphism $\widetilde{\Sigma}\to \Sigma$ is a resolution of singularity. More intrinsically, 
$$
\widetilde{\Sigma}=\big\{(x, H)\in \Sigma\times \check{\bp}^{\tau-1}\mid {\rm Im}{(\rd\psi_{x})}\subset H\big\},
$$
where $\check{\bp}^{\tau-1}$ parametrizes the hyperplanes in $\bc^{\tau}$ and we have moved ${\rm Im}{(\rd\psi_{x})}$ to the plane parallel to it passing through $0$.

On the other hand, let $\widetilde{\Delta}$ be the Nash blow-up of $\Delta$, i.e. it is the Zariski  closure of the graph $\widetilde{\Delta}^{\reg}$ of the embedding
$$
\Delta^{\reg}\hookrightarrow \Delta\times \check{\bp}^{\tau-1},\quad y\mapsto (y, T_{y}\Delta),
$$
where $\Delta^{\reg}$ is the regular locus of $\Delta$.
Note that there exists a natural morphism $\widetilde{\psi}:\widetilde{\Sigma}\to \widetilde{\Delta}$. 
Indeed, the composite $\widetilde{\Sigma}\hookrightarrow \Sigma\times \check{\bp}^{\tau-1}\to \Delta\times\check{\bp}^{\tau-1}$ is clearly proper, hence its image in $\Delta\times \check{\bp}^{\tau-1}$ is closed. It is even irreducible as $\widetilde{\Sigma}$ is irreducible. 
Let $\widetilde{\Sigma}_{\tau-1}$ be the inverse image of ${\Sigma}_{\tau-1}$ for the projection $\widetilde{\Sigma}\to \Sigma$.
For any point $x\in \Sigma_{\tau-1}$ such that $\psi(x)\in \Delta^{\reg}$, the singularity of the fiber $X_{\psi(x)}$ at $x$ is ordinary quadratic, hence the rank of $\rd\psi_{x}$ is $\tau-1$ and ${\rm Im}(\rd\psi_{x})=T_{\psi(x)}\Delta^{\reg}$. This implies that the image of $\widetilde{\Sigma}_{\tau-1}$ under the morphism $\widetilde{\Sigma}\to \Delta\times\check{\bp}^{\tau-1}$ contains $\widetilde{\Delta}^{\reg}$. 
As the image of $\widetilde{\Sigma}\to \Delta\times\check{\bp}^{\tau-1}$ is closed irreducible and   $\widetilde{\Delta}^{\reg}$ is dense in $\widetilde{\Delta}$, we obtain that $\widetilde{\Sigma}$ does map onto $\widetilde{\Delta}$ and this defines $\widetilde{\psi}$.

\begin{thm}[Teissier \cite{teissier hunting} for the hypersurface, Looijenga \cite{looij icis}] \label{nash nature}

The morphism $\widetilde{\psi}:\widetilde{\Sigma}\to \widetilde{\Delta}$ is an isomorphism. In particular, the Nash blow-up of $\Delta$ is smooth.

\end{thm}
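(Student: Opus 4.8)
The plan is to show that $\widetilde\psi$ is a proper monomorphism; being moreover surjective onto the reduced scheme $\widetilde\Delta$, it is then a closed immersion and an isomorphism, whence $\widetilde\Delta\cong\widetilde\Sigma$, which is smooth since the fibre product defining $\widetilde\Sigma$ is smooth. First I would record two easy points. The morphism $\widetilde\psi$ is proper by construction, and it is quasi-finite because its fibre over a point $(y,H)$ is contained in $\psi^{-1}(y)\subset\Sigma$, which is finite as $\psi\colon\Sigma\to\Delta$ is finite; hence $\widetilde\psi$ is finite. And $\widetilde\psi$ is birational: over the dense open $\Delta^{\reg}$ the normalisation $\psi$ is an isomorphism, and for $\xi\in\psi^{-1}(\Delta^{\reg})$ the fibre $X_{\psi(\xi)}$ has a single ordinary quadratic singularity at $\xi$ (a worse singularity, or a second singular point, would force $\psi(\xi)\in\Delta^{\rm sing}$), so $\rk(\rd\psi_\xi)=\tau-1$, $\im(\rd\psi_\xi)=T_{\psi(\xi)}\Delta$, and the only $H$ with $(\xi,H)\in\widetilde\Sigma$ lying over $\widetilde\Delta$ is $H=T_{\psi(\xi)}\Delta$. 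Thus $\widetilde\Sigma$ is smooth, finite and birational over $\widetilde\Delta$, i.e. it is the normalisation of $\widetilde\Delta$, and what remains is to prove that $\widetilde\psi$ is injective and unramified.

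For injectivity I would fix $(y,H)\in\widetilde\Delta$, let $x_1,\dots,x_m$ be the singular points of $X_y$, and use the product structure of miniversal deformations: $(S,y)\cong\prod_j(S_j,0)\times(S_0,0)$ with $S_j$ a miniversal base for $(X_y,x_j)$; compatibly, near $y$ the discriminant is the union $\bigcup_j\bigl(\Delta_j\times\prod_{l\neq j}S_l\times S_0\bigr)$, near $x_j$ one has $\Sigma\cong\Sigma_j\times\prod_{l\neq j}S_l\times S_0$, and $\rd\psi$ is block diagonal, so $\im(\rd\psi_{x_j})=\bigoplus_{l\neq j}T_0S_l\oplus T_0S_0$. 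By the purity of the discriminant each $\Delta_j$ is locally irreducible at $0$, hence the pieces above are exactly the local irreducible components of $\Delta$ at $y$, one per singular point. A limiting tangent hyperplane $H$ is a limit along one of them, say the $i$-th; as the other factors are constant along it, $H\supseteq\bigoplus_{l\neq i}T_0S_l\oplus T_0S_0$ while $H\cap T_0S_i$ is a proper hyperplane of $T_0S_i$. Therefore $\im(\rd\psi_{x_i})\subseteq H$, so $(x_i,H)\in\widetilde\Sigma$, whereas for $j\neq i$ one has $T_0S_i\subseteq\im(\rd\psi_{x_j})$ but $T_0S_i\not\subseteq H$, so $(x_j,H)\notin\widetilde\Sigma$; thus $x_i$ is the unique preimage, and the same decomposition recovers the surjectivity of $\widetilde\psi$.

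It remains to show that $\widetilde\psi$ is unramified, i.e. that $\rd\widetilde\psi$ is injective at every point. This is again local, and the product decomposition above together with the openness of versality (which makes $\psi$ miniversal in a neighbourhood of each $x_j$) reduces it to the case of a single isolated complete intersection singularity with $\Delta$ locally irreducible. There I would use that $\psi$ is infinitesimally stable (Lemma \ref{inf stable}) and Mather's stability theorem, so that the tangent map $\rd\psi\colon X\to\Hom(\bc^{n-k+\tau},\bc^{\tau})$ is transversal to the stratification by the rank; this gives, near any $x\in\Sigma_r$, a normal form for $\psi$ as a product of a submersion with the universal rank-degeneracy map, and comparing the resulting local models for $\Sigma\to\Delta$ and for $\widetilde\Sigma\to\widetilde\Delta$ yields the injectivity of $\rd\widetilde\psi$ at the points over $x$. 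Granting this, $\widetilde\psi$ is injective and unramified, hence a monomorphism, hence — being finite — a closed immersion, hence an isomorphism onto the reduced scheme $\widetilde\Delta$, so the Nash blow-up $\widetilde\Delta\cong\widetilde\Sigma$ is smooth. I expect this last local computation over the deep strata $\Sigma_r$ with $r<\tau-1$ to be the main obstacle: it is precisely where the finiteness and ``crease'' structure of the normalisation $\psi\colon\Sigma\to\Delta$ and the explicit form of the discriminant of a generic degeneracy must be exploited, and it is the heart of the arguments of Teissier \cite{teissier hunting} and Looijenga \cite{looij icis}.
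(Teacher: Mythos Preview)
Your overall strategy---finite, birational, bijective, unramified, hence an isomorphism---is sound, and your treatment of finiteness, birationality and set-theoretic injectivity via the product decomposition of the base is correct and clean. The gap is exactly where you locate it: unramifiedness. Your proposed argument there does not work as written. Transversality of $\rd\psi$ to the rank stratification of $\Hom(\bc^{n-k+\tau},\bc^{\tau})$ controls the local structure of $\Sigma$ and of $\widetilde\Sigma$, but it does \emph{not} give a normal form for the map $\psi$ itself; Mather's theorem tells you $\psi$ is stable, but stable map-germs of corank $>1$ do not admit the kind of ``submersion times universal degeneracy'' description you suggest, and there is no universal local model for $\Sigma\to\Delta$ to compare against. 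So the sentence ``comparing the resulting local models \dots\ yields the injectivity of $\rd\widetilde\psi$'' is where the proof stops being a proof.

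The paper's argument bypasses normal forms entirely and proves directly that $\widetilde\psi^{*}\km_{\widetilde\Delta,\tilde s}=\km_{\widetilde\Sigma,\tilde x}$ for every $\tilde x$. The mechanism is the following. Choose coordinates so that $H=\{t_\tau=0\}$; then $\km_{\widetilde\Sigma,\tilde x}$ is generated by (the restriction of) $\km_{X,x}$ together with the Nash coordinates $\eta_1,\dots,\eta_{\tau-1}$, and the latter already lie in $\widetilde\psi^{*}\km_{\widetilde\Delta,\tilde s}$. Infinitesimal stability (Lemma~\ref{inf stable}), applied to a linear form $t=t_\tau$, gives $\km_{X,x}\subset\psi^{*}\km_{S,s}+\Theta_{X,x}(\psi^{*}t_\tau)$. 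The crucial identity is then
\[
v(\psi^{*}\delta)\big|_{\Sigma}=0\quad\text{for every }v\in\Theta_{X},
\]
i.e.\ the image of $t\psi$ lands in $\psi^{*}\Theta_{S}(-\log\Delta)$; expanding $v(\psi^{*}\delta)=\sum_i v(\psi^{*}t_i)\,\psi^{*}(\partial\delta/\partial t_i)$ and using the Nash relations $\partial\delta/\partial t_i=\eta_i\,\partial\delta/\partial t_\tau$ on $\widetilde\Delta$ forces $v(\psi^{*}t_\tau)\in(\eta_1,\dots,\eta_{\tau-1})$ in $\co_{\widetilde\Sigma,\tilde x}$. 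Hence $\km_{X,x}\cdot\co_{\widetilde\Sigma,\tilde x}\subset\widetilde\psi^{*}\km_{\widetilde\Delta,\tilde s}$, which is exactly the missing inclusion. This computation is uniform over all Thom strata and needs no case analysis by corank; it is precisely the step you would need to replace your appeal to normal forms.
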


\begin{proof}

As $\widetilde{\psi}$ is already known to be surjective, it is enough to show that it is an immersion at any point $\tilde{x}\in \widetilde{\Sigma}$. Let $\tilde{s}=\widetilde{\psi}(\tilde{x})\in \widetilde{\Delta}$, we need to show that $\widetilde{\psi}^{*}\km_{\widetilde{\Delta},\tilde{s}}=\km_{\widetilde{\Sigma}, \tilde{x}}$. 
The inclusion $\widetilde{\psi}^{*}\km_{\widetilde{\Delta},\tilde{s}}\subset\km_{\widetilde{\Sigma}, \tilde{x}}$ is clear and it is enough to show the inclusion in the other direction. 
Let $\tilde{x}=(x,H)\in \Sigma\times \check{\bp}^{\tau-1}$ and let $s=\psi(x)$, then $\tilde{s}=(s, H)$.
Let $\delta\in \co_{S}$ be a generator of the defining ideal of $\Delta\subset S$. Let $t_{1},\cdots, t_{\tau}$ be a coordinate at $0$ of $S$, and let $[\xi_{1}:\cdots:\xi_{\tau}]$ be the associated coordinate for $\check{\bp}^{\tau-1}$, then $\widetilde{\Delta}$ can be described as
$$
\widetilde{\Delta}=\Big\{\big(s, [\xi_{1}:\cdots:\xi_{\tau}]\big)\in \bc^{\tau}\times \check{\bp}^{\tau-1}\,\big|\,\delta(s)=0, \, \xi_{i}\textstyle{\frac{\partial \delta}{\partial{t_{j}}}}=\xi_{j}\textstyle{\frac{\partial \delta}{\partial{t_{i}}}}, i,j=1,\cdots, \tau\Big\}.
$$
Without loss of generality, we can take the coordinate such that the hyperplane $H$ is defined by $t_{\tau}=0$. Let $\eta_{i}=\xi_{i}/\xi_{\tau}, i=1,\cdots, \tau-1$. Then
$$
\co_{\widetilde{\Delta}, \tilde{s}}=\bc[t_{1},\cdots, t_{\tau}, \eta_{1},\cdots, \eta_{\tau-1}]_{(s, 0)}\Big/\Big(\delta, \textstyle{\frac{\partial \delta}{\partial{t_{i}}}}-\eta_{i}\textstyle{\frac{\partial \delta}{\partial{t_{\tau}}}}, i=1,\cdots, \tau-1\Big).
$$
On the other hand, with the inclusion $\widetilde{\Sigma}\subset X\times \check{\bp}^{\tau-1}$, we obtain that $\km_{\widetilde{\Sigma}, \tilde{x}}$ is generated by the restriction of $\km_{X,x}$ to $\Sigma$ and the functions $\eta_{1},\cdots, \eta_{\tau-1}$. 

By lemma \ref{inf stable}, we have $\psi^{*}(\Theta_{S,s})=\psi^{-1}(\Theta_{S,s})+t\psi(\Theta_{X,x})$. Applying the vector fields on both sides at any element $t\in \km_{S,s}-\km_{S, s}^{2}$, we obtain that 
$$
\co_{X,x}=\psi^{-1}(\co_{S,s})+\Theta_{X,x}(\psi^{*}(t)),
$$
from which we deduce that 
\begin{equation}\label{control mx}
\km_{X,x}= \km_{X,x}\cdot \big[\psi^{-1}(\co_{S,s})+\Theta_{X,x}(\psi^{*}(t))\big] \subset \psi^{*}\km_{S,s}+\Theta_{X,x}(\psi^{*}(t)).
\end{equation}

We look at the ideal $\Theta_{X,x}(\psi^{*}(t))\cdot \co_{\widetilde{\Sigma},\tilde{x}}$ for $t=t_{i}, i=1,\cdots, \tau$.
For any vector field $v\in \Theta_{X,x}$, we have
$$
v(\psi^{*}(\delta))=\sum_{i=1}^{\tau}v\big(\psi^{*}(t_{i})\big)\psi^{*}\big(\textstyle{\frac{\partial\delta}{\partial t_{i}}}\big).
$$
As an element in $\co_{\widetilde{\Sigma},\tilde{x}}$, it is equal to 
\begin{align}
v(\psi^{*}(\delta))&=v\big(\psi^{*}(t_{\tau})\big)\psi^{*}\big({\textstyle{\frac{\partial\delta}{\partial t_{\tau}}}}\big)+\sum_{i=1}^{\tau-1}v\big(\psi^{*}(t_{i})\big) \cdot \eta_{i}\psi^{*}\big(\textstyle{\frac{\partial\delta}{\partial t_{\tau}}}\big) \nonumber
\\
&=\Big[v\big(\psi^{*}(t_{\tau})\big)+\sum_{i=1}^{\tau-1}v\big(\psi^{*}(t_{i})\big) \cdot \eta_{i}\Big] \psi^{*}\big({\textstyle{\frac{\partial\delta}{\partial t_{\tau}}}}\big), \label{v delta}
\end{align}
due to the relation $\textstyle{\frac{\partial \delta}{\partial{t_{i}}}}=\eta_{i}\textstyle{\frac{\partial \delta}{\partial{t_{\tau}}}}$ in $\co_{\widetilde{\Delta},\tilde{s}}$.
One verifies that $v(\psi^{*}(\delta))|_{\Sigma}=0$, i.e. $\Theta_{X}(\psi^{*}(\delta))|_{\Sigma}=0$\footnote{In fact, one can show that ${\rm Im}\{\rd\psi: \Theta_{X}\to \psi^{*}\Theta_{S}\}=\psi^{*}(\Theta_{S}(-\log \Delta))$, where $\Theta_{S}(-\log \Delta)$ is the sheaf of holomorphic vector fields preserving $\Delta$ (cf. \cite{looij icis}, Lem. 6.18).}. Indeed, as $\Sigma$ is normal, it is enough to verify it on the dense open subscheme $\Sigma_{\tau-1}$. It is known that the singularity of the geometric fibers of $\psi$ at points in $\Sigma_{\tau-1}$ is ordinary quadratic. By the openness of versality, it is then enough to verify the assertion for the miniversal deformation of ordinary quadratic singularity, which is clear.  Consequently, as $\psi^{*}\big({\textstyle{\frac{\partial\delta}{\partial t_{\tau}}}}\big)\neq 0$ in $\co_{\widetilde{\Sigma},\tilde{x}}$, we deduce from the equation (\ref{v delta}) the equality in $\co_{\widetilde{\Sigma},\tilde{x}}$:
$$
v\big(\psi^{*}(t_{\tau})\big)=-\sum_{i=1}^{\tau-1}v\big(\psi^{*}(t_{i})\big) \cdot \eta_{i},
$$
which implies that
$$
\Theta_{X,x}(\psi^{*}(t_{\tau}))\subset (\eta_{1},\cdots,\eta_{\tau-1}) \quad \text{in } \co_{\widetilde{\Sigma},\tilde{x}}.
$$
With the equation (\ref{control mx}), we obtain
$$
\km_{X,x}\cdot\co_{\widetilde{\Sigma},\tilde{x}}\subset \psi^{*}\km_{S,s}\cdot\co_{\widetilde{\Sigma},\tilde{x}}+(\eta_{1},\cdots, \eta_{\tau-1})\subset \widetilde{\psi}^{*}(\km_{\widetilde{\Delta}, \tilde{s}}).
$$
As $\km_{\widetilde{\Sigma},\tilde{x}}$ is generated by $\km_{X,x}\cdot\co_{\widetilde{\Sigma},\tilde{x}}$ and $\eta_{1},\cdots, \eta_{\tau-1}$, this implies that $\km_{\widetilde{\Sigma},\tilde{x}}\subset  \widetilde{\psi}^{*}(\km_{\widetilde{\Delta}, \tilde{s}})$ as required.

\end{proof}

\begin{rem}\label{nash nature hypersurface}
In the hypersurface case, one can show that $\rd\psi$ is of rank $\ge \tau-1$. Indeed, let $g_{1}, \cdots, g_{\tau}$ be a set of representative for the Tjurina module $\co_{X_{0}}/Df\cdot \co_{X,0}^{n}$. Without loss of generality, we can take $g_{\tau}=1$. Let 
$$
F(x_{1},\cdots, x_{n},t_{1},\cdots, t_{\tau})=f(x_{1},\cdots, x_{n})+\sum_{i=1}^{\tau-1}t_{i}g_{i}+t_{\tau}.
$$ 
Let $X\subset \bc^{n}\times \bc^{\tau}$ be the closed subscheme defined by $F=0$. The composite $\psi:X\hookrightarrow \bc^{n}\times \bc^{\tau}\to \bc^{\tau}$ is then a miniversal deformation of $X_{0}$. Note that $\partial F/\partial t_{\tau}=1$, the germ $(X,0)$ then admits a coordinate $\bc^{n+\tau-1}\to (X,0)$ defined by
$$
(x_{1}, \cdots, x_{n},t_{1},\cdots, t_{\tau-1})\mapsto \Big(x_{1}, \cdots, x_{n},t_{1},\cdots, t_{\tau-1}, -f(x_{1},\cdots, x_{n})-\sum_{i=1}^{\tau-1}t_{i}g_{i}\Big).
$$
With respect to this coordinate, the morphism $\psi:X\to \bc^{\tau}$ is nothing but the morphism $\bc^{n+\tau-1}\to \bc^{\tau}$ defined by 
$$
(x_{1}, \cdots, x_{n},t_{1},\cdots, t_{\tau-1})\mapsto \Big(t_{1},\cdots, t_{\tau-1}, -f(x_{1},\cdots, x_{n})-\sum_{i=1}^{\tau-1}t_{i}g_{i}\Big)$$
Its Jacobian matrix can be calculated to be
$$
D\psi=\begin{bmatrix}
0&\cdots&0&1&&\\
0&\cdots&0&&\ddots&\\
0&\cdots&0&&&1\\
-\frac{\partial F}{\partial x_{1}}&\cdots & -\frac{\partial F}{\partial x_{n}} & -g_{1}&\cdots& -g_{\tau-1}
\end{bmatrix}_{\tau\times(n+\tau-1)},
$$
which is clearly of rank $\ge \tau-1$. This implies that $\widetilde{\Sigma}=\Sigma$, hence the morphism $\psi|_{\Sigma}:\Sigma\to \Delta$ can be identified with the Nash blow-up of $\Delta$.

\end{rem}


\begin{example}

Let $X_{0}$ be the germ of singularity defined by $y^{2}-x^{n}=0, n\ge 2$. Let $X\subset \bc^{2}\times \bc^{n-1}$ be the hypersurface defined by the equation
$$
F(x,y; a_{2},\cdots, a_{n}):=y^{2}-(x^{n}+a_{2}x^{n-2}+\cdots +a_{n-1}x+a_{n})=0.
$$
Then the composite $\psi:X\hookrightarrow \bc^{2}\times \bc^{n-1}\to \bc^{n-1}$ defines a miniversal deformation of $X_{0}$. Notice that $\partial F/\partial a_{n}=1$, whence the germ of $X$ at $0$ admits a coordinate $\bc^{2}\times \bc^{n-2}\mapsto (X,0)$ defined by
$$
(x,y; a_{2},\cdots, a_{n-1})\mapsto \big(x,y, a_{2},\cdots, a_{n-1}, y^{2}-(x^{n}+a_{2}x^{n-2}+\cdots +a_{n-1}x)\big).
$$
With this coordinate, the map $\psi$ becomes $\psi: \bc^{2}\times \bc^{n-2}\mapsto \bc^{n-1}$, which can be described as
$$
(x,y; a_{2},\cdots, a_{n-1})\mapsto \big(a_{2},\cdots, a_{n-1}, y^{2}-(x^{n}+a_{2}x^{n-2}+\cdots +a_{n-1}x)\big).
$$
The induced tangent map is then described by the Jacobian matrix
\begin{equation}\label{jacobian dpsi}
D\psi=\begin{bmatrix}
0&0&1&&\\
0&0&&\ddots&\\
0&0&&&1\\
\frac{\partial F}{\partial x} & 2y & -x^{n-2}&\cdots& -x
\end{bmatrix}_{(n-1)\times n},
\end{equation}
Note that $D\psi$ is of rank at least $n-2$, and that it is singular if and only if
$$
\begin{cases}
\frac{\partial F}{\partial y}=2y=0,& \\
\frac{\partial F}{\partial x}=-\big(nx^{n-1}+(n-2)a_{2}x^{n-3}+\cdots +a_{n-1}\big)=0. 
\end{cases}
$$
These equations then define $\Sigma$ as a closed subscheme of $ \bc^{2}\times \bc^{n-2}$. Note that $\frac{\partial}{\partial a_{n-1}}\big(\frac{\partial F}{\partial x}\big)=-1$,  we obtain that $\Sigma$ admits a coordinate $\bc^{n-2}\to \Sigma$ defined by
$$
(x,a_{2},\cdots, a_{n-2})\mapsto \big(x, 0;a_{2},\cdots, a_{n-2}, -(nx^{n-1}+(n-2)a_{2}x^{n-3}+\cdots +2a_{n-2}x^{1})\big).
$$
Compose with the map $\psi: \bc^{2}\times \bc^{n-2}\to \bc^{n-1}$, we find that the discriminant $\Delta$ can be parametrized by the map $\varphi:\bc^{n-2}\to \Delta$ defined by
$$
(x, a_{2},\cdots, a_{n-2})\mapsto \big(a_{2},\cdots, a_{n-2}, \varphi_{n-1}(x, a_{2},\cdots, a_{n-2}), \varphi_{n}(x, a_{2},\cdots, a_{n-2})\big),
$$
where
\begin{align*}
\varphi_{n-1}(x, a_{2},\cdots, a_{n-2})&= -(nx^{n-1}+(n-2)a_{2}x^{n-3}+\cdots +2a_{n-2}x^{1}),
\\
\varphi_{n}(x, a_{2},\cdots, a_{n-2})&=-\big(x^{n}+a_{2}x^{n-2}+\cdots +a_{n-2}x^{2}+\varphi_{n-1}(x, a_{2},\cdots, a_{n-2})x\big)
\\
&=(n-1)x^{n}+(n-3)a_{2}x^{n-2}+\cdots+a_{n-2}x^{2}.
\end{align*}
Observe with the concrete description (\ref{jacobian dpsi}) of $D\psi$ that ${\rm Im}(D\psi)$ at $\varphi(x,a_{2},\cdots, a_{n-2})$, which is also the tangent space of $\Delta$ at that point in case that the point lies in $\Delta^{\reg}$, equals the hyperplane of $\bc^{n-1}$ generated by the vectors
$$
\begin{bmatrix}
1\\
0\\
\vdots\\
0\\
-x^{n-2}
\end{bmatrix}, 
\begin{bmatrix}
0\\
1\\
\vdots\\
0\\
-x^{n-1}
\end{bmatrix}, 
\cdots\cdots,
\begin{bmatrix}
0\\
\vdots\\
0\\
1\\
-x
\end{bmatrix}, 
$$
which clearly depends only on $x$. In particular, the coordinate $x$ can be recovered from the tangent space. This observation explains why the Nash blow-up of $\Delta$ equals exactly $\Sigma$ as asserted in theorem \ref{nash nature}.

Consider the parametrization $\varphi:\bc^{n-2}\to \Delta\subset \bc^{n-1}$, it has Jacobian matrix
\begin{equation}\label{jacobian dphi}
D\varphi=\begin{bmatrix}
0&1&0&\cdots &0\\
0&0&1&\cdots&0\\
\vdots&\vdots&\vdots&\ddots&\vdots\\
0&0&0&\cdots&1\\
\frac{\partial\varphi_{n-1}}{\partial x}& -(n-2)x^{n-3}&-(n-3)x^{n-4}&\cdots & 2x\\
\frac{\partial\varphi_{n}}{\partial x} & (n-3)x^{n-2}& (n-4)x^{n-3}&\cdots&x^{2}\end{bmatrix}_{(n-1)\times (n-2)}.
\end{equation}
Note that $D\varphi$ is of rank at least $n-3$, and that it is singular if and only if
$$
\begin{cases}
\frac{\partial\varphi_{n-1}}{\partial x}=-\big(n(n-1)x^{n-2}+(n-2)(n-3)a_{2}x^{n-4}+\cdots+2\cdot 1a_{n-2}\big)=0,& \\
\frac{\partial\varphi_{n}}{\partial x}=n(n-1)x^{n-1}+(n-2)(n-3)a_{2}x^{n-3}+\cdots+2\cdot 1 a_{n-2}x=-x\frac{\partial\varphi_{n-1}}{\partial x}=0. 
\end{cases}
$$
Let $\Sigma^{1,1}\subset \Sigma$ be the closed subscheme defined by these two equations, or equivalently by the equation $\frac{\partial\varphi_{n-1}}{\partial x}=0$, it is the critical locus of the morphism $\varphi:\bc^{n-2}\to \Delta$. 
Note that $\frac{\partial}{\partial a_{n-2}}\big(\frac{\partial \varphi_{n-1}}{\partial x}\big)=-2$,  we obtain that $\Sigma^{1,1}$ admits a coordinate $\phi^{1,1}:\bc^{n-3}\to \Sigma^{1,1}\subset \bc^{n-2}$ defined by
$$
(x,a_{2},\cdots, a_{n-3})\mapsto \Big(x,a_{2},\cdots, a_{n-3}, -\frac{1}{2}\big(n(n-1)x^{n-2}+(n-2)(n-3)a_{2}x^{n-4}+\cdots+3\cdot 2a_{n-3}x\big)\Big).
$$
In particular, $\Sigma^{1,1}$ is smooth. It is an example of the \emph{Thom-Boardman strata}.
Let $\Delta^{1,1}$ be the image of $\Sigma^{1,1}$ under the morphism $\psi$,  let $\varphi^{1,1}$ be the composite $\bc^{n-3}\xrightarrow{\phi^{1,1}}\Sigma^{1,1}\to \Delta^{1,1}$, it is the restriction of $\psi$ to $\Sigma^{1,1}$ written with the coordinate $\phi^{1,1}$.
As before, with the concrete description (\ref{jacobian dphi}) of $D\varphi$, we obtain that ${\rm Im}(D\varphi)$ at $\phi(x,a_{2},\cdots, a_{n-3})$ is the linear space of $\bc^{n-1}$ generated by the vectors
$$
\begin{bmatrix}
1\\
0\\
\vdots\\
0\\
-(n-2)x^{n-3}\\
(n-3)x^{n-2}
\end{bmatrix}, 
\begin{bmatrix}
0\\
1\\
\vdots\\
0\\
-(n-3)x^{n-4}\\
(n-4)x^{n-3}
\end{bmatrix}, 
\cdots\cdots,
\begin{bmatrix}
0\\
\vdots\\
0\\
1\\
2x\\
x^{2}
\end{bmatrix}, 
$$
which depends only on $x$. 
Moreover, in case that $\phi(x,a_{2},\cdots, a_{n-3})$ is a smooth point of $\Delta^{1,1}$, ${\rm Im}(D\varphi)$ coincides with the tangent space of $\Delta^{1,1}$ at this point. Again, this implies that the Nash blow-up of $\Delta^{1,1}$ is exactly $\Sigma^{1,1}$ as the coordinate $x$ can be recovered from the tangent space.

We can proceed to consider the critical locus $\Sigma^{1,1,1}$ of the morphism $\varphi^{1,1}:\Sigma^{1,1}\to \Delta^{1,1}$ and this process can be iterated, we get a sequence of smooth manifolds
$\Sigma\supsetneq\Sigma^{1,1}\supsetneq \Sigma^{1,1, 1}\supsetneq \cdots $, called \emph{Thom-Boardman strata}.  
Let $\Delta^{1,\cdots,1}$ be the image of $\Sigma^{1,\cdots, 1}$ under the morphism $\psi$. With the same reasoning as before, we can show that $\Sigma^{1,\cdots, 1}$ is the Nash blow-up of $\Delta^{1,\cdots,1}$.

Despite this beautiful property of the Thom-Boardman strata, they are not sufficient to understand the geometry of the discriminant $\Delta$. For example, they miss completely the strict $\delta$-strata, none of them appears as $\Delta^{1,\cdots,1}$.

\cqfd

\end{example}

\subsection{Whitney regularity}\label{whitney regularity}

Whitney stratification is the most widely accepted notion of “equisingularity''. In this section, we make a brief recall of Thom-Mather's theory of Whitney stratification and Teissier's numerical criteria for the Whitney regularity condition. 
They are used to show the Whitney regularity of the union of the strict $\delta$-strata over the equisingular strata. We then use this result to show that the sheaves $\ch^{*}(j_{!*}\cf^{i})$ are locally constant over the equisingular strata.

\paragraph{(1) Whitney stratification}

Let $X$ be a reduced complex analytic variety, a \emph{stratification} of $X$ is a decomposition into disjoint union $X=\bigsqcup_{\alpha\in A}X_{\alpha}$ of locally closed smooth complex analytic subvarieties, called \emph{strata}, satisfying the \emph{frontier condition}: 
$$X_{\alpha}\cap \overline{X_{\beta}}\neq \emptyset 
\iff 
X_{\alpha}\subset \overline{X_{\beta}},\quad \forall \, \alpha,\beta\in A.
$$
The stratification is said to be \emph{locally finite} if the number of strata is locally finite.

\begin{defn}

Let $X$ be a reduced locally closed subvariety of a smooth complex analytic variety $M$, let $\{X_{\alpha}\}_{\alpha\in A}$ be a stratification of $X$. A pair of strata $(X_{\alpha}, X_{\beta})$ with $X_{\alpha}\subset \overline{X_{\beta}}$ is said to satisfy Whitney's condition at $y\in X_{\alpha}$ if

\begin{enumerate}[topsep=1pt, itemsep=1pt, label=$(\alph*)$]

\item
For any sequence $\{x_{i}\}_{i\in \bn}\subset X_{\beta}$ tending to $y$, we have
$$
T_{y}X_{\alpha}\subset \lim_{i\to \infty} T_{x_{i}}X_{\beta}.
$$

\item
For any sequences $\{y_{i}\}_{i\in \bn}\subset X_{\alpha}, \{x_{i}\}_{i\in \bn}\subset X_{\beta}$ tending to $y$, we have
$$
\lim_{i\to \infty} \overline{x_{i}y_{i}} \subset \lim_{i\to \infty} T_{x_{i}}X_{\beta},
$$
where $\overline{x_{i}y_{i}}$ is the secant line passing through $x_{i}$ and $y_{i}$ for some local coordinate system of $M$. 

\end{enumerate}

\noindent In case that the conditions are satisfied for all points of $X_{\alpha}$, we say that $X_{\beta}$ is \emph{Whitney regular} over $X_{\alpha}$. 
If the Whitney conditions are satisfied for any pair of strata $(X_{\alpha}, X_{\beta})$ with $X_{\alpha}\subset \overline{X_{\beta}}$, we call the stratification $\{X_{\alpha}\}_{\alpha\in A}$ a \emph{Whitney stratification}.
\end{defn}

The above conditions will be called Whitney's condition $(a)$ and $(b)$. It is known that the condition $(b)$ implies condition $(a)$.

\begin{thm}[Whitney \cite{whitney}]

Let $X$ be a reduced locally closed subvariety of a smooth complex analytic variety $M$, then there exists a locally finite Whitney stratification of $M$ such that $X$ is a union of strata. 

\end{thm}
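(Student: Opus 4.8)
The plan is to deduce the global statement from a local estimate on the size of the ``bad locus'' and then to assemble the stratification by a descending induction on dimension. Call a pair $(A,B)$ of disjoint connected smooth locally closed analytic submanifolds of $M$ with $A\subset\overline{B}\setminus B$ \emph{admissible}, and let $W(A,B)\subset A$ be the set of points at which Whitney's condition $(a)$ or $(b)$ for $(A,B)$ fails. The crux is the following \emph{key lemma}: for every admissible pair $(A,B)$,
$$W(A,B)\ \text{is a closed analytic subset of }A,\quad\text{and}\quad \dim W(A,B)<\dim A.$$
Granting this, the existence statement becomes a matter of bookkeeping.

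First I would produce a crude locally finite stratification of $M$ by smooth strata, not yet Whitney regular: take the open stratum $M\setminus\overline{X}$, the stratum $X^{\reg}$, and recurse on the closed analytic subvariety $X^{\mathrm{sing}}=X\setminus X^{\reg}$, whose dimension is strictly smaller; the recursion on $\dim X$ terminates. By construction the closure of every stratum obtained this way is the union of that stratum with strata of strictly smaller dimension, and after passing to connected components the decomposition satisfies the frontier condition. Next I would refine it to a Whitney stratification by a second descending induction on dimension $d$: suppose all strata of dimension $>d$ have been fixed so that every admissible pair among them is Whitney regular. For a $d$-dimensional stratum $A$, local finiteness ensures that only finitely many strata $B$ with $A\subset\overline{B}$ meet a given compact set, so $Z_A:=\bigcup_B W(A,B)$ is, by the key lemma, a closed analytic subset of $A$ of dimension $<d$. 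Replace the $d$-dimensional strata by the connected components of $A\setminus Z_A$, move the sets $Z_A$ and their closures into the still-unstratified part of $M$, and pass to dimension $d-1$, this time cutting out the loci $W(\,\cdot\,,B)$ against all strata $B$ constructed so far. Each step strictly lowers the dimension at which further refinement is required, so the procedure terminates near any compact set; the frontier condition is preserved throughout by the connected-component device, together with the local topological triviality of a Whitney stratification along its strata (Thom--Mather), which forces the closure of every stratum to be a union of strata. The outcome is a locally finite Whitney stratification of $M$ having $X$ as a union of strata.

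The genuine content, and the step I expect to be the main obstacle, is the key lemma --- equivalently, the claim that an admissible pair $(A,B)$ is Whitney regular at a generic point of $A$. I would prove it via the Gauss map. With $k=\dim B$, let $\pi\colon G_k(TM)\to M$ be the Grassmann bundle of $k$-planes in the tangent spaces of $M$, and let $\widetilde{B}\subset G_k(TM)$ be the closure of the image of $x\mapsto(x,T_xB)$; then $\widetilde{B}$ is analytic, $\pi|_{\widetilde{B}}$ is proper, and for $y\in A$ the fiber $\tau(y)\subset G_k(T_yM)$ is exactly the set of limiting tangent planes to $B$ at $y$. Condition $(a)$ fails at $y$ on the image under $\pi$ of the set $\{(y,P)\in\widetilde{B}\cap\pi^{-1}(A):T_yA\not\subset P\}$, whose closure is an analytic subset of $A$ by Remmert's proper mapping theorem; its nowhere-density follows from upper semicontinuity of the fibre dimension of $\pi|_{\widetilde{B}}$ over $A$ together with the curve selection lemma applied to analytic arcs in $B$ approaching a putative generic bad point, and to the induced arcs of tangent planes. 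Condition $(b)$ is then reduced to condition $(a)$: after blowing up $M$ along $A$ and passing to the strict transform $B'$ of $B$, Whitney's condition $(b)$ for $(A,B)$ at $y$ translates into Whitney's condition $(a)$ for the pair formed by the exceptional divisor over $y$ and $B'$, so the same argument applies. A shortcut more in the spirit of the numerical methods used elsewhere in this paper is to invoke Teissier's criterion directly: Whitney regularity of $(A,B)$ along $A$ is equivalent to the local constancy along $A$ of the polar multiplicities $m_0(\overline{B},y),\dots,m_{k-1}(\overline{B},y)$, and these are upper semicontinuous constructible functions of $y\in A$, hence locally constant off a proper closed analytic subset, which is the desired $W(A,B)$.
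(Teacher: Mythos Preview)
The paper does not supply a proof of this theorem: it is stated with attribution to Whitney and immediately used as a black box. There is therefore no paper proof to compare against.

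Your outline follows the classical route (Whitney, later streamlined by Thom and Mather): show that the locus where Whitney's conditions fail for an admissible pair $(A,B)$ is a proper closed analytic subset of $A$, then refine a crude smooth stratification by descending induction on dimension. Two points deserve attention.

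First, your invocation of Thom--Mather to secure the frontier condition is circular, or at best premature: Thom's first isotopy lemma takes a Whitney stratification as hypothesis, which is precisely what you are constructing. The frontier condition must be enforced directly within the induction. The standard fix is to work with a filtration $X=X_d\supset X_{d-1}\supset\cdots$ by closed analytic subsets with $X_i\setminus X_{i-1}$ smooth of pure dimension $i$, and at each refinement step to enlarge the set $Z_A$ you remove from a stratum $A\subset X_i\setminus X_{i-1}$ by adjoining, for every higher stratum $B$, the set $A\cap\overline{B}$ whenever this is a proper subset of $A$; this is analytic because $\overline{B}$ is, and after throwing it into $X_{i-1}$ the connected-component device does the rest.

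Second, the ``shortcut'' via Teissier's polar-multiplicity criterion is logically sound but historically inverted and far heavier than what is needed; it presupposes the full machinery of local polar varieties developed long after Whitney. The direct argument you sketch first --- Gauss map plus Remmert's proper mapping theorem for condition $(a)$, then blow-up along $A$ to reduce $(b)$ to $(a)$ --- is the right one. Your justification of nowhere-density (``upper semicontinuity of fibre dimension plus curve selection'') is too compressed to count as a proof, however; the honest argument requires a genuine dimension count on $\widetilde{B}\cap\pi^{-1}(A)$ and on the incidence locus $\{(y,P):T_yA\not\subset P\}$ inside it.
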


The Whitney stratified spaces enjoy a nice locally topologically trivial property along the strata:

\begin{thm}[Thom-Mather \cite{mather notes}, Consequence of Thom's first isotopy lemma]\label{thom-mather}

Let $X$ be a reduced locally closed subvariety of a smooth complex analytic variety $M$. Let $\{X_{\alpha}\}_{\alpha\in A}$ be a Whitney stratification of $X$. For any point $x$, let $X_{\alpha}$ be the stratum containing $x$. Then for any local embedding $(X, x)\subset (\bc^{n},0)$ induced from a local coordinate of $M$, and any local  retraction $\rho: (\bc^{n},0)\to (X_{\alpha},0)$, there exists $\ep_{0}>0$ such that for any $0<\ep<\ep_{0}$ there exists $\eta_{\ep}>0$ such that for any $0<\eta<\eta_{\ep}$ there exists a homeomorphism $h$ making the diagram commutative
$$
\begin{tikzcd}
B_{0}(\ep)\cap \rho^{-1}(X_{\alpha}\cap B_{0}(\eta))\arrow[rr,"h"] \arrow[dr, "\rho"] & & (B_{0}(\ep)\cap \rho^{-1}(x)) \times (X_{\alpha}\cap B_{0}(\eta))\arrow[dl, "\pr_{2}"]
\\
& X_{\alpha}\cap B_{0}(\eta) &
\end{tikzcd},
$$ 
which induces for any stratum $X_{\beta}$ containing $X_{\alpha}$ at its frontier a homeomorphism
$$
\overline{X_{\beta}}\cap B_{0}(\ep)\cap \rho^{-1}(X_{\alpha}\cap B_{0}(\eta))
\xrightarrow{h} 
\big(\overline{X_{\beta}}\cap B_{0}(\ep)\cap \rho^{-1}(x)\big) \times (X_{\alpha}\cap B_{0}(\eta)),
$$
where $B_{0}(r)$ is the open ball in $\bc^{n}$ with center $0$ and radius $r$.

\end{thm}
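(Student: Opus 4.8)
The plan is to deduce the statement from Thom's first isotopy lemma by constructing a controlled stratified vector field along $X_{\alpha}$ and integrating it. First I would fix a system of \emph{control data} (in the sense of Mather \cite{mather notes}) for the Whitney stratification $\{X_{\gamma}\}_{\gamma\in A}$ in a neighbourhood of $x$ inside the ambient $\bc^{n}$: for each stratum $X_{\gamma}$ with $X_{\alpha}\subset \overline{X_{\gamma}}$, a tubular neighbourhood $T_{\gamma}$ equipped with a smooth retraction $\pi_{\gamma}:T_{\gamma}\to X_{\gamma}$ and a tubular function $\rho_{\gamma}:T_{\gamma}\to [0,\infty)$ with $\rho_{\gamma}^{-1}(0)=X_{\gamma}$, subject to the compatibility relations $\pi_{\alpha}\circ \pi_{\gamma}=\pi_{\alpha}$ and $\rho_{\alpha}\circ \pi_{\gamma}=\rho_{\alpha}$ wherever both sides are defined. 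The existence of such a system, compatible with the prescribed retraction $\rho$ onto $X_{\alpha}$ (after shrinking and, if necessary, a smooth isotopy of the ambient coordinates so that $\pi_{\alpha}$ agrees with $\rho$), rests on Whitney's condition $(b)$: it is $(b)$ that lets one choose the $\pi_{\gamma}$ so that the fibres of $\pi_{\alpha}|_{X_{\gamma}}$ meet the level sets of $\rho_{\alpha}$ transversally and the commutation relations hold near $X_{\alpha}$.

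Second, I would lift the coordinate vector fields on the base. Choose local real-analytic coordinates $(u_{1},\dots,u_{d_{\alpha}})$ on $X_{\alpha}$ near $x$, with $d_{\alpha}=\dim(X_{\alpha})$, identifying $(X_{\alpha},x)$ with a ball in $\bc^{d_{\alpha}}$. For each coordinate direction $\partial/\partial u_{i}$ I would construct a \emph{controlled} vector field $v_{i}$ on $X\cap B_{0}(\ep)$: a stratified vector field, smooth on each stratum, whose restriction to $X_{\alpha}$ is $\partial/\partial u_{i}$, which is $\pi_{\gamma}$-related to $\partial/\partial u_{i}$ on each $X_{\gamma}$ in the closure-star of $X_{\alpha}$, and which is tangent to every level set of $\rho_{\gamma}$. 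These lifts are built stratum by stratum in increasing order of dimension, by a partition-of-unity argument: on $X_{\gamma}$ one first lifts over the tube $T_{\alpha}$ via $\pi_{\alpha}$, then corrects so as to kill $\rho_{\gamma}$-derivatives; Whitney's condition $(a)$ (a consequence of $(b)$) guarantees that the resulting field extends continuously to $X_{\alpha}$ and inherits the control relations from the lower strata.

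Third, integrate. By Mather's lemma on the integration of controlled vector fields, each $v_{i}$ has a flow $\phi_{i}^{t}$ that is continuous on $X\cap B_{0}(\ep)$, preserves every stratum, commutes with all the retractions $\pi_{\gamma}$ and preserves every tubular function $\rho_{\gamma}$; in particular it preserves $\rho$ and each $\overline{X_{\beta}}$ (the latter being a union of strata). Setting $\phi^{u}=\phi_{1}^{u_{1}}\circ\cdots\circ\phi_{d_{\alpha}}^{u_{d_{\alpha}}}$, the map
$$
h^{-1}:\ \big(B_{0}(\ep)\cap \rho^{-1}(x)\big)\times \big(X_{\alpha}\cap B_{0}(\eta)\big)\ \longrightarrow\ B_{0}(\ep)\cap \rho^{-1}\big(X_{\alpha}\cap B_{0}(\eta)\big),\qquad (z,u)\longmapsto \phi^{u}(z),
$$
carries the fibre over $x$ to the fibre over $u$; it is a homeomorphism because the $v_{i}$ can be run backwards, it is compatible with $\rho$ by construction, and it restricts to a homeomorphism of each $\overline{X_{\beta}}\cap B_{0}(\ep)\cap \rho^{-1}(\,\cdot\,)$ because the flows preserve $\overline{X_{\beta}}$. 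The step I expect to cost the most work is precisely the continuity of these flows: since $v_{i}$ is only stratified, not globally smooth, continuity of $\phi_{i}^{t}$ up to the lower strata must be proved by hand, extracting uniform estimates from the control data — this is the technical heart of Thom's isotopy lemma.

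Finally I would pin down the quantifiers. Take $\ep_{0}$ small enough that the sphere $\partial B_{0}(\ep)$ is transverse to every stratum for all $0<\ep<\ep_{0}$ (conic structure of Whitney stratified sets) and that the control data are defined on $B_{0}(\ep_{0})$. Given such an $\ep$, choose $\eta_{\ep}$ so small that, for $|t|$ at most the diameter of $X_{\alpha}\cap B_{0}(\eta)$, the flows $\phi_{i}^{t}$ starting in $\rho^{-1}\big(X_{\alpha}\cap B_{0}(\eta)\big)\cap B_{0}(\ep)$ stay inside $B_{0}(\ep)$; this uses that $\rho$ is a submersion onto $X_{\alpha}$ near $0$, so that $\rho^{-1}(X_{\alpha}\cap B_{0}(\eta))$ is radially bounded away from $\partial B_{0}(\ep)$. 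This last bookkeeping, together with the commutation relations among the $\pi_{\gamma}$ and $\rho_{\gamma}$, is routine once the controlled flows are in hand.
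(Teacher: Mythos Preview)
Your proposal correctly sketches the standard Mather argument via control data and integration of controlled vector fields, which is exactly the content of Thom's first isotopy lemma as developed in \cite{mather notes}. Note, however, that the paper does not supply its own proof of this statement: it is quoted as a known result from the literature (Thom--Mather), so there is no in-paper proof to compare against. Your outline is the orthodox route and is precisely what the citation points to.
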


Conversely, the Whitney stratification can be characterized topologically as follows: Let $\{X_{\alpha}\}_{\alpha\in A} $ be a stratification of a reduced complex analytic variety $X$, let $(X_{\alpha}, X_{\beta})$ be any pair satisfying $X_{\alpha}\subset \overline{X_{\beta}}$. The pair is said to satisfy the condition $(TT)$ (local topological triviality) if it satisfies the conclusion of Thom-Mather's theorem. It is said to satisfy the condition $(TT^{*})$ if for any $x\in X_{\alpha}$, any local embedding $(X,x)\subset (\bc^{n}, 0)$ and any $i=\dim(X_{\alpha}), \cdots, n$, there exists a dense open subscheme $U_{\alpha}$ of the Grassmannian of linear subspaces of $\bc^{n}$ of dimension $i$ containing $T_{x}(X_{\alpha})$, such that for any submanifold $H\subset \bc^{n}$ containing $(X_{\alpha}, x)$ with $T_{x}(H)\in U_{\alpha}$, the slice $(X_{\alpha}\cap H, X_{\beta}\cap H)$ satisfies the condition $(TT)$.

\begin{thm}[L\^e-Teissier \cite{le-teissier}]

Let $\{X_{\alpha}\}_{\alpha\in A}$ be a stratification of a complex analytic variety $X$, then it is a  Whitney stratification if and only if any pair $X_{\alpha}\subset  \overline{X_{\beta}}$ of it satisfies the condition $(TT^{*})$.

\end{thm}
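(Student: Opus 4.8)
The plan is to prove the two implications separately, using throughout that Whitney's condition $(b)$ implies condition $(a)$, so that it is enough to control $(b)$ for each pair $X_\alpha\subset\overline{X_\beta}$. For the implication ``Whitney $\Rightarrow(TT^{*})$'' the route is essentially the one already prepared by the tools recalled above. Fix $x\in X_\alpha$, a local embedding $(X,x)\subset(\bc^{n},0)$, and an integer $i$ with $\dim(X_\alpha)\le i\le n$. First I would invoke the classical fact that the Whitney conditions are preserved under sufficiently general slicing: for $H$ a submanifold through the germ $(X_\alpha,x)$ whose tangent $T_{x}(H)$ lies in a suitable dense open subset $U_\alpha$ of the Grassmannian of $i$-planes containing $T_{x}(X_\alpha)$ --- concretely, $H$ in general position with respect to the finitely many strata meeting a small ball around $x$ --- the traces $\{X_\gamma\cap H\}$ form a Whitney stratification of $X\cap H$ near $x$, since transversality of $H$ to the larger strata carries over both limits of tangent planes and limits of secant lines. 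Applying Thom's first isotopy lemma in the form of Theorem~\ref{thom-mather} to this sliced Whitney stratification then produces the local topological product structure for the pair $(X_\alpha\cap H,X_\beta\cap H)=(X_\alpha,X_\beta\cap H)$, which is exactly condition $(TT)$. Hence $(TT^{*})$ holds.

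The converse, ``$(TT^{*})\Rightarrow$ Whitney'', is where the real work lies, and the plan is to route it through the numerical characterisation of Whitney regularity by polar multiplicities. Fix $X_\alpha\subset\overline{X_\beta}$ and $x\in X_\alpha$; the goal is Whitney's condition $(b)$ at $x$. Taking in the definition of $(TT^{*})$ the slice $H$ of full dimension $n$ shows first that $(X_\alpha,X_\beta)$ itself satisfies $(TT)$, so the pair is locally topologically trivial along $X_\alpha$; in particular the local topological type of $(X_\beta,y)$ is constant as $y$ ranges over $X_\alpha$ near $x$. Applying $(TT^{*})$ at each codimension $k$ gives, for a general $X_\alpha$-containing linear subspace $L$ of codimension $k$, that the local topological type of $(X_\beta\cap L,y)$ is likewise constant along $X_\alpha$ near $x$; the reduced Euler characteristics of the members of a flag of such general sections of $(X_\beta,y)$ are therefore all locally constant along $X_\alpha$. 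I would then feed this into the L\^e-Teissier formula, which expresses the relevant local polar multiplicities $m_{k}(X_\beta,y)$, $0\le k\le\dim(X_\beta)-\dim(X_\alpha)-1$, as alternating sums of those reduced Euler characteristics: their constancy forces the $m_{k}(X_\beta,y)$ to be independent of $y\in X_\alpha$ near $x$, that is, the local polar varieties of $X_\beta$ are equimultiple along $X_\alpha$. Finally, Teissier's numerical criterion \cite{teissier polar 2} turns this equimultiplicity, together with the smoothness of $X_\alpha$, into Whitney's conditions $(a)$ and $(b)$ for $(X_\alpha,X_\beta)$ at $x$; since $x$ and the pair were arbitrary, $\{X_\alpha\}$ is a Whitney stratification.

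The hard part --- the one step that is more than formal bookkeeping --- is the middle of the converse: the passage from the local topological triviality of the pair and of all its general plane sections to the equimultiplicity of the polar varieties. This is precisely the content of \cite{le-teissier}, combining the analysis of limits of tangent spaces with the multiplicity formula for general plane sections, and I would quote it rather than reproduce it. Two ancillary points still need attention. First, the genericity bookkeeping: the dense open subsets $U_\alpha$ of the relevant Grassmannians must be chosen coherently, so that intersecting $X_\beta$ with general $X_\alpha$-containing planes of all codimensions stays inside them and so that such a general section through $x$ restricts to a general section at nearby points $y\in X_\alpha$ --- this uses only the local finiteness of the stratification near $x$ together with Bertini-type transversality for successive general sections. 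Second, one must check that condition $(TT)$ for a pair passes to condition $(TT)$ for its general slices, which follows by restricting the homeomorphism furnished by Theorem~\ref{thom-mather}. With \cite{le-teissier} and these verifications in hand, the argument closes.
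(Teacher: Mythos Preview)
The paper does not give its own proof of this theorem: it is stated as a result of L\^e--Teissier and attributed to \cite{le-teissier}, with no argument in the text. There is therefore nothing in the paper to compare your proposal against.

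That said, your sketch is a faithful outline of the actual L\^e--Teissier argument: the forward implication via preservation of Whitney conditions under generic slicing followed by Thom's first isotopy lemma, and the converse via constancy of Euler characteristics of generic plane sections, the polar multiplicity formula, and Teissier's equimultiplicity criterion (Theorem~\ref{whitney criteria}). You also correctly flag that the substantive step --- passing from topological triviality of all generic sections to equimultiplicity of the polar varieties --- is the content of \cite{le-teissier} itself, and you quote it rather than reproduce it. So your proposal is essentially a proof-by-citation wrapped in the right scaffolding, which is exactly how the paper treats the result as well.
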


\paragraph{(2) Teissier's criteria}

Teissier \cite{teissier polar 2}  has found a beautiful characterization of the Whitney regularity  condition, we make a brief recall of his result.

Let $X$ be a reduced complex analytic variety of pure dimension $d$, let $Y$ be a smooth irreducible closed  subvariety of $X^{\rm sing}$ of dimension $d'$.  For $y\in Y$, without loss of generality, we can assume the existence of a local embedding $(X,y)\subset (\bc^{n},0)$ such that $Y$ is identified with the subspace $\bc^{d'}\subset \bc^{n}$.
Henceforth we denote by $(X,y)$ the germ at $y$ of $X$ equipped with this local embedding. 
The Whitney condition involves taking limit of secant lines and tangent spaces, this led to the consideration of the blow-up $\bl_{Y}(X)$ of $X$ with center $Y$ and the {Nash blow-up} $N(X)$ of $X$, which is the Zariski closure in $X\times \gr_{d,n}$ of the graph of the Gauss map
$$
X^{\reg}\to  \gr_{d,n},\quad x\mapsto T_{x}(X^{\reg}),
$$ 
where $\gr_{d,n}$ is the Grassmannian of $d$-dimensional linear subspace of $\bc^{n}$, and the tangent space $T_{x}(X^{\reg})$ is identified with the $d$-dimensional linear subspace of $\bc^{n}$ parallel to it. 
For technical reasons, we consider the \emph{projective conormal space} $C(X)$ of $X$ instead of the Nash blow-up. 
Recall that the \emph{conormal bundle} $T_{X^{\reg}}^{*}\bc^{n}$  of $X^{\reg}$ in the ambient space $\bc^{n}$ is defined as
$$
T_{X^{\reg}}^{*}\bc^{n}=\big\{(x,\xi)\in T^{*}\bc^{n}\mid x\in X, \xi\in T_{x}^{*}(\bc^{n})\text{ such that } \pair{v,\xi}=0 \text{ for all }v\in T_{x}
(X^{\reg})\big\},
$$
i.e. at each point $x\in X^{\reg}$, it parametrizes the hyperplanes in $\bc^{n}$ containing the tangent space $T_{x}(X^{\reg})$. Let $T_{X}^{*}\bc^{n}$ be the Zariski closure of $T^{*}_{X^{\reg}}\bc^{n}$ in $T^{*}\bc^{n}$, and let $C(X)$ be the projectivization of $T^{*}_{X}(\bc^{n})$. It is clear that $C(X)$ is of dimension $n-1$. 
By construction, we get the diagram
$$
\begin{tikzcd}
C(X)\arrow[d, "\kappa"] \arrow[r, hook] \arrow[dr,"\lambda"]& X\times \check{\bp}^{n-1}\arrow[d,"\pr_{2}" ]
\\
X&\check{\bp}^{n-1}
\end{tikzcd}.
$$
Let $e_{Y}: \bl_{Y}(X)\to X$ be the blow-up of $X$ with center $Y$, let $\hat{e}_{Y}: E_{Y}C(X)\to C(X)$ be the blow-up of $C(X)$ with center $\kappa^{-1}(Y)$, we obtain then the commutative diagram
\begin{equation}\label{normal-conormal}
\begin{tikzcd}
E_{Y}C(X)\arrow[r, "\hat{e}_{Y}"]\arrow[dd, "\kappa'"] \arrow[ddr, "\xi"]& C(X)\arrow[dd, "\kappa"] \arrow[r, hook] \arrow[dr,"\lambda"]& X\times \check{\bp}^{n-1}\arrow[d,"\pr_{2}" ]
\\
&&\check{\bp}^{n-1}
\\
\bl_{Y}(X)\arrow[r,"{e}_{Y}"] & X&
\end{tikzcd},
\end{equation}
where the existence of the morphism $\kappa'$ is due to the universal property of the blow-up. 
Note that $E_{Y}C(X)$ coincides with the Zariski closure of the inverse image of $\bl_{Y}(X)-e_{Y}^{-1}(Y)$ in $\bl_{Y}(X)\times_{X}C(X)$.
By construction, the pre-image $\xi^{-1}(Y)$ and the morphisms
$$
\begin{tikzcd}[column sep={6em,between origins}]
&\xi^{-1}(Y)\arrow[dl, "\kappa'"'] \arrow[dr, "\hat{e}_{Y}"]&\\
\bl_{Y}(X)& &C(X)
\end{tikzcd}
$$
encodes the information about the limit secant lines and limit tangent spaces. For any $y\in Y$, the above diagram induces morphisms
\begin{equation}\label{fiber of xi}
\begin{tikzcd}[column sep={6em,between origins}]
&\xi^{-1}(y)\arrow[dl, "q_{1}"'] \arrow[dr, "q_{2}"]&\\
\bp^{n-d'-1}& &\check{\bp}^{n-1}
\end{tikzcd}.
\end{equation}
To analyse the fiber $\xi^{-1}(y)$ via the above diagram, we are led to the local polar varieties: 

\begin{defn}

Let $H_{d-k+1}$ be a linear subspace of $\bc^{n}$ of codimension $d-k+1$, let $\pi:\bc^{n}\to \bc^{d-k+1}$ be the linear projection with kernel $H_{d-k+1}$. The Zariski closure in $(X,y)$ of the critical locus for the restriction $\pi:(X^{\reg},y)\subset \bc^{n}\to \bc^{d-k+1}$ is called the \emph{local polar variety} of $X$ at $y$ with respect to $H_{d-k+1}$, denoted $P_{y}(X, H_{d-k+1})$.

\end{defn}

Let $\check{H}^{d-k}\subset \check{\bp}^{n-1}$ be the dual of $H_{d-k+1}\subset \bc^{n}$, it consists of the hyperplanes in $\bc^{n}$ containing $H_{d-k+1}$, hence is a linear projective subspace of dimension $d-k$.  

\begin{prop}[Teissier \cite{teissier polar 2}, Chap. IV, corollaire 1.3.2]\label{teissier local polar}

For a generic choice of $H_{d-k+1}$, the local polar variety $P_{y}(X, H_{d-k+1})$ coincides with the image $\kappa(\lambda^{-1}(\check{H}^{d-k}))$ in $(X,y)$. It is a reduced closed subvariety of $(X,y)$, either of pure codimension $k$ or empty.

\end{prop}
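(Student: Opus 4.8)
\emph{The plan} is to reinterpret the local polar variety through the projective conormal space $C(X)$, and then to dispose of all the ``generic'' assertions with a single transversality argument of Kleiman's type; everything is local at $y$ (work in a small representative or with germs), so ``generic choice of $H_{d-k+1}$'' means that $H_{d-k+1}$ lies in a dense Zariski-open subset of its Grassmannian. First I would unwind the definition of the critical locus pointwise. For $x\in X^{\reg}$ put $T=T_{x}(X^{\reg})$, a $d$-plane in $\bc^{n}$, and recall that $\ker\pi=H_{d-k+1}$. Then $x$ lies on the critical locus of $\pi|_{X^{\reg}}$ iff $\pi|_{T}\colon T\to\bc^{d-k+1}$ is not onto, iff $\dim(T\cap H_{d-k+1})\ge k$; since $\dim(T+H_{d-k+1})=d+(n-d+k-1)-\dim(T\cap H_{d-k+1})$, this holds iff $\dim(T+H_{d-k+1})\le n-1$, i.e.\ iff some hyperplane of $\bc^{n}$ contains both $T$ and $H_{d-k+1}$. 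As $\kappa^{-1}(x)$ over a point of $X^{\reg}$ is precisely the set of hyperplanes through $T$, and $\check H^{d-k}$ is the set of hyperplanes through $H_{d-k+1}$, this yields the set-theoretic equality
\[
\mathrm{crit}\!\left(\pi|_{X^{\reg}}\right)=\kappa\!\left(\lambda^{-1}(\check H^{d-k})\cap\kappa^{-1}(X^{\reg})\right),
\]
so that $P_{y}(X,H_{d-k+1})=\overline{\mathrm{crit}(\pi|_{X^{\reg}})}$ is the closure of the right-hand side. (Where $\dim(T\cap H_{d-k+1})$ is exactly $k$ the hyperplane above is unique, so $\kappa$ is one-to-one there; I will not need this, but it explains the birationality of $\kappa$ onto a polar variety.)

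Next I would exploit that $C(X)$ is reduced of pure dimension $n-1$ and that $\kappa^{-1}(X^{\rm sing})$ is closed of dimension $\le n-2$ --- indeed by construction each irreducible component of $C(X)$ is the closure of the conormal bundle of a component of $X^{\reg}$, so $\kappa^{-1}(X^{\reg})$ is dense. The subspace $\check H^{d-k}\subset\check\bp^{n-1}$ is linear of codimension $c:=n-1-d+k$, and as $H_{d-k+1}$ varies these $\check H^{d-k}$ form a single $\pgl_{n}$-orbit. Applying Kleiman's transversality theorem to the morphism $\lambda\colon C(X)\to\check\bp^{n-1}$ against a general translate of $\check H^{d-k}$, I expect to obtain, for generic $H_{d-k+1}$, that $\lambda^{-1}(\check H^{d-k})$ is empty or reduced of pure dimension $(n-1)-c=d-k$; applying the same estimate to $\kappa^{-1}(X^{\rm sing})$ gives $\dim\!\big(\lambda^{-1}(\check H^{d-k})\cap\kappa^{-1}(X^{\rm sing})\big)\le (n-2)-c=d-k-1$, so no component of $\lambda^{-1}(\check H^{d-k})$ lies over $X^{\rm sing}$. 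Hence $\lambda^{-1}(\check H^{d-k})\cap\kappa^{-1}(X^{\reg})$ is dense in $\lambda^{-1}(\check H^{d-k})$, and, $\kappa$ being proper, taking closures in the equality of the previous step yields
\[
\kappa\!\left(\lambda^{-1}(\check H^{d-k})\right)=\overline{\kappa\!\left(\lambda^{-1}(\check H^{d-k})\cap\kappa^{-1}(X^{\reg})\right)}=P_{y}(X,H_{d-k+1}).
\]

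To conclude the dimension statement: on $X^{\reg}$ the critical locus of $\pi$ is cut out by the vanishing of the $(d-k+1)$-minors of the Jacobian of $\pi$ in local coordinates, a determinantal condition of expected codimension $(d-(d-k))\cdot(d-k+1-(d-k))=k$; by the bound of Macaulay (Eagon--Northcott) for determinantal loci, every component has codimension $\le k$ in the $d$-dimensional smooth $X^{\reg}$, hence every component of $P_{y}(X,H_{d-k+1})=\overline{\mathrm{crit}(\pi|_{X^{\reg}})}$ has codimension $\le k$ in the pure $d$-dimensional $X$. On the other hand, by the previous step this polar variety is the $\kappa$-image of the pure $(d-k)$-dimensional space $\lambda^{-1}(\check H^{d-k})$, so each of its components has dimension $\le d-k$, i.e.\ codimension $\ge k$. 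Therefore $P_{y}(X,H_{d-k+1})$ is either empty or of pure codimension $k$. Reducedness follows the same way: a general linear section $\lambda^{-1}(\check H^{d-k})$ of the reduced space $C(X)$ is reduced (Kleiman/Bertini), while for a general projection $\pi$ the determinantal scheme structure on $\mathrm{crit}(\pi|_{X^{\reg}})$ is generically reduced, so the polar variety, being the closure of the latter, is reduced.

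\emph{Expected main obstacle.} The delicate part is the middle step: one must arrange, with a \emph{single} general choice of $H_{d-k+1}$, that $\lambda^{-1}(\check H^{d-k})$ simultaneously has the expected dimension, is reduced, and has no component over $X^{\rm sing}$. This is precisely what Kleiman's theorem supplies, applied to the morphism $\lambda$ directly (not to a fixed linear projection of $C(X)$, since $\lambda$ need not be dominant --- the dual variety $\overline{\lambda(C(X))}$ may have dimension $<n-1$) together with the transitive $\pgl_{n}$-action on linear subspaces of $\check\bp^{n-1}$; the rest is bookkeeping with closures and the determinantal codimension bound.
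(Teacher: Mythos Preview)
The paper does not give its own proof of this proposition; it is quoted verbatim from Teissier \cite{teissier polar 2}, Chap.~IV, corollaire 1.3.2, and used as a black box. So there is no in-paper argument to compare against.

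Your proposal is essentially the standard proof (and essentially Teissier's): the pointwise linear-algebra identification of the critical locus with $\kappa\big(\lambda^{-1}(\check H^{d-k})\cap\kappa^{-1}(X^{\reg})\big)$ is correct, and the Kleiman-type transversality argument on the map $\lambda:C(X)\to\check\bp^{n-1}$ against a general linear subspace is exactly the right mechanism for simultaneously controlling dimension, reducedness, and the absence of components over $X^{\rm sing}$. One small point worth tightening: your reducedness paragraph mixes two scheme structures. Kleiman gives that $\lambda^{-1}(\check H^{d-k})$ is reduced, but what you want is reducedness of the polar variety with its determinantal (Fitting) structure on $X$. The clean way to close this is the observation you already made parenthetically: for generic $H_{d-k+1}$, at a generic critical point $\dim(T\cap H_{d-k+1})=k$ exactly, so $T+H_{d-k+1}$ is a hyperplane and $\kappa$ is bijective there; hence $\kappa:\lambda^{-1}(\check H^{d-k})\to P_{y}(X,H_{d-k+1})$ is birational, and one checks that it identifies the reduced structure upstairs with the determinantal structure downstairs generically on each component. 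Together with the pure-dimension statement this gives reducedness. With that adjustment your argument is complete.
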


The coincidence of  $P_{y}(X, H_{d-k+1})$ with $\kappa(\lambda^{-1}(\check{H}^{d-k}))$ implies that 
$q_{1}(q_{2}^{-1}(\check{H}^{d-k}))$
coincides with the projectivization of the normal cone
of $P_{y}(X, H_{d-k+1})$ at $y$, where $q_{1},q_{2}$ refers to the morphisms in the diagram (\ref{fiber of xi}). This explains the relationship between the local polar varieties and the fiber $\xi^{-1}(y)$.

\begin{prop}[Teissier \cite{teissier polar 2}, Chap. IV, th\'eor\`eme 3.1]

For $k=0,\cdots, d-1$, let $H_{d-k+1}$ be a generic linear subspace of $\bc^{n}$ of codimension $d-k+1$, then the multiplicity of $P_{y}(X,H_{d-k+1})$ at $y$ depends only on the analytic local algebra $\co_{X,y}$.

\end{prop}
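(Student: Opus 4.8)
The multiplicity appearing in the statement is Teissier's $k$-th polar multiplicity; write it $m_{k}(X,y)$ for the moment. The plan is to decouple the two assertions it contains — independence of the generic $H_{d-k+1}$, and independence of the local embedding $(X,y)\subset(\bc^{n},0)$ — and to compute $m_{y}\big(P_{y}(X,H_{d-k+1})\big)$ on the projective conormal space $C(X)$, where it becomes a fixed intersection number on an auxiliary space into which $H$ does not enter. Throughout I would take $Y=\{y\}$ in the normal--conormal diagram (\ref{normal-conormal}), so that the cone it produces lives in $\bp^{n-1}$ and is the tangent cone of the polar variety at $y$.

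\emph{Independence of the generic projection.} By proposition \ref{teissier local polar}, for $H_{d-k+1}$ in a dense open subset of the Grassmannian the polar variety $P_{y}(X,H_{d-k+1})$ is reduced, of pure codimension $k$, and equals $\kappa\big(\lambda^{-1}(\check{H}^{d-k})\big)$, with $\check{H}^{d-k}\subset\check{\bp}^{n-1}$ then a generic linear subspace of dimension $d-k$, i.e. the intersection of $n-d+k-1$ generic hyperplanes. Since the multiplicity of a reduced pure-dimensional germ equals the degree of its projectivized tangent cone, the identification recorded after proposition \ref{teissier local polar} gives
$$
m_{y}\big(P_{y}(X,H_{d-k+1})\big)=\deg\Big(q_{1}\big(q_{2}^{-1}(\check{H}^{d-k})\big)\subset\bp^{n-1}\Big).
$$
The space $\xi^{-1}(y)$ and the morphisms $q_{1},q_{2}$ of (\ref{fiber of xi}) do not depend on $H$; as $H_{d-k+1}$ ranges over its generic locus, $q_{2}^{-1}(\check{H}^{d-k})$ ranges over the generic linear sections of the fixed space $\xi^{-1}(y)$, so the displayed degree is a fixed intersection number on $\xi^{-1}(y)$ involving the two tautological line bundles $q_{1}^{*}\co_{\bp^{n-1}}(1)$ and $q_{2}^{*}\co_{\check{\bp}^{n-1}}(1)$, hence the same for every generic $H_{d-k+1}$. (Equivalently, it is a mixed multiplicity read off the graded algebra of $C(X)$ localized along $\kappa^{-1}(y)$ with respect to $\km_{X,y}$ and $\lambda^{*}\co(1)$, a quantity in which no $H$ occurs.)

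\emph{Independence of the embedding.} Given two local embeddings $\phi\colon(X,y)\hookrightarrow(\bc^{n},0)$ and $\psi\colon(X,y)\hookrightarrow(\bc^{n'},0)$, I would route through the graph $\Gamma=\{(\phi(x),\psi(x))\}\subset(\bc^{n}\times\bc^{n'},0)$: it is isomorphic to $(X,y)$ and projects isomorphically onto each of $\phi(X)$ and $\psi(X)$, so it suffices to prove the single lemma that for any analytic $g\colon X_{0}\to\bc^{m}$ the polar multiplicities of an embedded germ $X_{0}\subset\bc^{n}$ agree with those of its graph $\{(w,g(w))\}\subset\bc^{n}\times\bc^{m}$. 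A linear projection $\bc^{n}\times\bc^{m}\to\bc^{d-k+1}$, $(u,v)\mapsto Au+Bv$, restricts on this graph to $w\mapsto Aw+Bg(w)$, whose differential, restricted to $T_{w}(X_{0}^{\reg})$, is $A+B\,\rd g(w)$. Because the tangent directions of $X_{0}$ and the Jacobian $\rd g$ both extend across $X_{0}^{\mathrm{sing}}$ to the Nash modification of $X_{0}$, this assignment is there a section of a vector bundle depending $\bc$-linearly on $(A,B)$; a Bertini-type argument should then show that for generic $(A,B)$ it degenerates over the preimage of a subvariety of $X_{0}$ of the expected codimension $k$ whose tangent cone at $y$ has the same degree as that of the polar variety of $X_{0}$ in $\bc^{n}$ with respect to a fixed generic $H_{d-k+1}$. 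Running this through the first part identifies the polar multiplicities for $\phi$ and for $\psi$. (When $g$ is linear — in particular for a linear inclusion $\bc^{n}\hookrightarrow\bc^{n+m}$ — the projection literally restricts on the graph and the lemma is immediate.)

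\emph{The main obstacle} is precisely this Bertini/transversality step: one must control the degeneracy locus of the \emph{affine-linear} family $A+B\,\rd g$ of bundle sections on the Nash modification — not merely its dimension, but the degree of its tangent cone over the singular point $y$, where the Nash modification and $\rd g$ may themselves degenerate. This is where the full interplay of the conormal and normal--conormal diagrams (\ref{normal-conormal}) and the genericity of proposition \ref{teissier local polar} is needed; granting it, the reductions above close up, and in fact show that the entire sequence $\big(m_{k}(X,y)\big)_{0\le k\le d-1}$ is an invariant of the analytic algebra $\co_{X,y}$.
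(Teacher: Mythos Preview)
The paper does not supply its own proof of this proposition; it is quoted from Teissier with a precise citation and then used as a black box (the very next line simply introduces the notation $m_{k}(X,y)$ and moves on). So there is nothing in this paper to compare your argument against.

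That said, your outline follows Teissier's own route. In Part 1 the interpretation via the normal--conormal diagram and mixed multiplicities is exactly the intended mechanism; one caveat is that the paper's identification of $q_{1}(q_{2}^{-1}(\check{H}^{d-k}))$ with the projectivized tangent cone is stated only as an equality of sets, whereas the multiplicity of a reduced germ is the \emph{scheme-theoretic} degree of its (possibly non-reduced) tangent cone. You implicitly repair this by passing to the mixed-multiplicity formulation on the bigraded algebra associated with $(\km_{X,y},\lambda^{*}\co(1))$ on $C(X)$, which is the correct rigorous statement and indeed makes independence of generic $H$ automatic.

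For Part 2 the graph reduction is standard and sound. The obstacle you flag is real but can be sidestepped once Part 1 is in hand: instead of analysing the degeneracy locus of the affine family $A+B\,\rd g$ on the Nash modification, one (i) checks that the mixed multiplicities of Part 1 are unchanged under a linear inclusion $\bc^{n}\hookrightarrow\bc^{n+m}$ (the conormal of $X$ in $\bc^{n+m}$ is, fibrewise over $X$, the join of the conormal in $\bc^{n}$ with a fixed $(\bc^{m})^{*}$, and the extra hyperplane classes contribute trivially to the relevant intersection numbers), and (ii) observes that the graph of $g$ is carried to $X\times\{0\}$ by the ambient analytic automorphism $(u,v)\mapsto(u,v-\tilde{g}(u))$ of $(\bc^{n+m},0)$, where $\tilde{g}$ is any local extension of $g$ to a neighbourhood of $X$ in $\bc^{n}$. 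Since the conormal construction is functorial under ambient automorphisms and Part 1 already gives independence of the generic $H$, this finishes the argument without the delicate transversality computation.
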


Hence the multiplicity of $P_{y}(X,H_{d-k+1})$ at $y$ is independent of the choice of $H_{d-k+1}$, we will denote it by $m_{k}(X,y)$. 
For $k=0$, the restriction $\pi:X^{\reg}\to \bc^{d+1}$ is a local embedding at a neighbourhood of $y$, hence the local polar variety $P_{y}(X, H_{d+1})$ is exactly the germ of $X$ at $y$. In particular, $m_{0}(X,y)$ coincides with the multiplicity of $X$ at $y$.

\begin{thm}[Teissier \cite{teissier polar 2}, Chap. V, th\'eor\`eme 1.2]\label{whitney criteria}

Let $(X,y)\subset (\bc^{n},0)$ be a local embedding of reduced complex analytic variety, let $Y$ be a smooth complex analytic subvariety of $X^{\rm sing}$ containing $y$,  the following conditions are equivalent:
\begin{enumerate}[topsep=2pt, noitemsep, label=$(\arabic*)$]

\item

The sequence of multiplicities $\big(m_{0}(X,y),\cdots,m_{d-1}(X,y)\big)$ is locally constant at  $y$,

\item

$\dim(\xi^{-1}(y))=n-2-\dim(Y)$.

\item
The pair $(X^{\reg}, Y)$ satisfies Whitney's condition at $y$,

\end{enumerate}

\end{thm}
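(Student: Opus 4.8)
The plan is to route the whole equivalence through the normal--conormal diagram $(\ref{normal-conormal})$, and more precisely through the exceptional fibre $\xi^{-1}(y)\subset E_{Y}C(X)$ together with the two projections of $(\ref{fiber of xi})$,
$$
q_{1}:\xi^{-1}(y)\longrightarrow \bp^{\,n-d'-1},\qquad q_{2}:\xi^{-1}(y)\longrightarrow \check{\bp}^{\,n-1},
$$
which record respectively the limits of secant directions (taken modulo $T_{y}Y\cong\bc^{d'}$) and the limits of tangent hyperplanes along sequences in $X^{\reg}$ converging to $y$. I would prove $(2)\Leftrightarrow(3)$ by a direct analysis of $\xi^{-1}(y)$ via $q_{1},q_{2}$, and $(1)\Leftrightarrow(2)$ by identifying the polar multiplicities $m_{k}(X,y)$ with intersection numbers carried by $\xi^{-1}(y)$, then invoking semicontinuity and conservation of number over $Y$. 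For the bookkeeping: since $X$ is reduced and purely $d$-dimensional, $C(X)$ is irreducible of dimension $n-1$; as $Y\subset X^{\rm sing}$, $\kappa^{-1}(Y)$ is nowhere dense in $C(X)$, so $\xi^{-1}(Y)$ is a divisor in $E_{Y}C(X)$, of dimension $n-2$. Hence over a Zariski-dense open $Y^{\circ}\subset Y$ one has $\dim\xi^{-1}(y)=n-2-d'$, while in general $\dim\xi^{-1}(y)\ge n-2-d'$; condition $(2)$ asserts that this minimum is attained at the given point $y$.

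For $(2)\Leftrightarrow(3)$ I would translate Whitney's conditions into incidence statements. Condition $(a)$ at $y$ is equivalent to $\kappa^{-1}(y)\subset\{y\}\times\check{H}_{Y}$, where $\check{H}_{Y}\subset\check{\bp}^{\,n-1}$ is the $\bp^{\,n-1-d'}$ of hyperplanes containing $T_{y}Y$: indeed $T_{x_{i}}X^{\reg}\subset H_{i}$ passes to the limit, and conversely every hyperplane containing a limit tangent space $T$ is itself a limit conormal, so if all of them contain $T_{y}Y$ then $T_{y}Y\subset\bigcap_{H\supset T}H=T$. Condition $(b)$ at $y$ is equivalent to $(q_{1},q_{2})(\xi^{-1}(y))\subset I$, where $I=\{(\ell,H):\ell\subset H\}\subset\bp^{\,n-d'-1}\times\check{H}_{Y}$: a limit secant line modulo $T_{y}Y$ must be annihilated by every limit conormal at $y$, and letting the conormal vary over all $H\supset T$ recovers $\ell\subset T$; this is consistent with the known implication $(b)\Rightarrow(a)$. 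Since $\dim I=2(n-d')-3$ while the expected value of $\dim\xi^{-1}(y)$ is $(n-d')-2$, a dimension count on the fibres of $q_{1}$ and $q_{2}$ over $Y^{\circ}$ versus over $y$ shows that $\xi^{-1}(y)$ lies in $I$ with the minimal dimension exactly when these fibres do not jump, whereas a violation of $(a)$ or of $(b)$ forces a component of $\xi^{-1}(y)$ of dimension $\ge n-1-d'$. This gives $(2)\Leftrightarrow(3)$.

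For $(1)\Leftrightarrow(2)$, the core of the argument, I would use local polar varieties. By Proposition~\ref{teissier local polar}, for a generic $H_{d-k+1}$ the polar variety $P_{y}(X,H_{d-k+1})$ equals $\kappa(\lambda^{-1}(\check{H}^{d-k}))$, so its tangent cone at $y$, hence its multiplicity $m_{k}(X,y)$, is computed by slicing $\xi^{-1}(y)$: concretely $m_{k}(X,y)$ is the length of $\xi^{-1}(y)\cap q_{2}^{-1}(\check{H}^{d-k})\cap q_{1}^{-1}(L)$ for generic complementary linear $L\subset\bp^{\,n-d'-1}$, whenever this is finite, which is the case exactly when $\dim\xi^{-1}(y)=n-2-d'$. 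One then argues that the vector $y'\mapsto(m_{0}(X,y'),\dots,m_{d-1}(X,y'))$ is upper semicontinuous along $Y$ (Teissier \cite{teissier polar 2}) and locally constant on $Y^{\circ}$, with generic value given by the expected-dimensional fibre; if $\dim\xi^{-1}(y)>n-2-d'$, then the excess component of $\xi^{-1}(y)$ contributes strictly in the conservation-of-number formula for the specialization of $\xi^{-1}(y')$, $y'\in Y^{\circ}$, to $\xi^{-1}(y)$, so some $m_{k}(X,y)$ strictly exceeds its generic value and $(1)$ fails; conversely $(1)$ forbids any such jump, forces all the intersections above to be finite, hence $\dim\xi^{-1}(y)=n-2-d'$, i.e. $(2)$ holds.

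I expect $(1)\Leftrightarrow(2)$ to be the main obstacle. It rests on the precise multiplicity formula identifying $m_{k}(X,y)$ with an intersection number localized on the exceptional fibre $\xi^{-1}(y)$ — this is where Teissier's theory of generic local polar varieties, the behaviour of the Hilbert--Samuel multiplicity under specialization, and the properness and generic flatness of $E_{Y}C(X)\to Y$ needed to run conservation of number all have to be set up with care — together with the delicate point that a polar multiplicity can jump only on specializing from $Y^{\circ}$ to $y$ and not across the strata of the normal cone itself. The geometric part $(2)\Leftrightarrow(3)$ is more elementary but still requires a clean treatment of limits of tangent spaces and of secants, i.e. of exactly which points are added in passing from $\bl_{Y}(X)$ and $C(X)$ to $E_{Y}C(X)$; the implication $(b)\Rightarrow(a)$ in particular has to be extracted from the incidence description rather than assumed.
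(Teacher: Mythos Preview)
The paper does not give a proof of this theorem: it is stated with attribution to Teissier \cite{teissier polar 2}, Chap.~V, th\'eor\`eme~1.2, and immediately afterwards the text moves on to the geometric interpretation of multiplicity and Proposition~\ref{hm transversal}. So there is no ``paper's own proof'' to compare against; the result is quoted as background.

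That said, your sketch is a reasonable outline of Teissier's actual argument. The paper sets up exactly the normal--conormal diagram~(\ref{normal-conormal}) and the fibre diagram~(\ref{fiber of xi}) that you use, and the paragraph following Proposition~\ref{teissier local polar} explains the link between $q_{1}(q_{2}^{-1}(\check{H}^{d-k}))$ and the projectivized normal cone of $P_{y}(X,H_{d-k+1})$ at $y$, which is the geometric input behind your identification of $m_{k}(X,y)$ with an intersection number on $\xi^{-1}(y)$. Your assessment that $(1)\Leftrightarrow(2)$ is the hard part --- requiring semicontinuity of polar multiplicities and a conservation-of-number argument for the specialization of $\xi^{-1}(y')$ --- is accurate; this is where the bulk of Teissier's work in \cite{teissier polar 2} lies, and it is not something one can fill in without reproducing substantial portions of that reference.
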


Geometrically, the multiplicity of a germ of complex analytic subvariety $(V,0)\subset (\bc^{n},0)$ equals the intersection number of the germ with a generic plane in $\bc^{n}$ of complementary dimension and sufficiently close to $0$. For the local polar varieties, we have a convenient choice of such planes:

\begin{prop}[Henry-Merle \cite{hm}]\label{hm transversal}

For $k=0,1,\cdots, d-1$, let $H_{d-k+1}$ be the generic plane defining the local polar variety $P_{y}(X, H_{d-k+1})$, let $H_{d-k}$ be a generic plane of $\bc^{n}$ of codimension $d-k$ containing $H_{d-k+1}$, then $H_{d-k}$ intersects $P_{y}(X, H_{d-k+1})$ transversally at $y$.

\end{prop}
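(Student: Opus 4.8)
The plan is to translate the statement into the language of the normal--conormal diagram (\ref{normal-conormal}) and the description of the projectivized tangent cone of a polar variety recalled just above, and then to reduce the transversality assertion to an incidence-variety dimension count on the Grassmannian of polar-defining planes. Write $P:=P_{y}(X,H_{d-k+1})$. By Proposition \ref{teissier local polar} we may take $H_{d-k+1}$ in the dense open set where $P$ is either empty --- in which case there is nothing to prove --- or reduced of pure dimension $d-k$, and where moreover $\bp(C_{y}P)=q_{1}(q_{2}^{-1}(\check H^{d-k}))$ inside $\bp^{n-1}$, with $\check H^{d-k}\subset\check\bp^{n-1}$ the dual of $H_{d-k+1}$. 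Since $\dim(H_{d-k})+\dim(C_{y}P)=n$, the assertion that $H_{d-k}$ meets $P$ transversally at $y$ --- equivalently that the intersection multiplicity of $P$ and $H_{d-k}$ at $y$ equals the multiplicity $m(P,y)=m_{k}(X,y)$ --- is, by the classical characterization of the multiplicity of a germ via intersection with a linear space of complementary dimension, equivalent to $\bp(H_{d-k})\cap\bp(C_{y}P)=\emptyset$ in $\bp^{n-1}$, i.e. to $H_{d-k}\cap C_{y}P=\{0\}$. So it is enough to produce a dense family of codimension-$(d-k)$ planes $H_{d-k}\supseteq H_{d-k+1}$ with $H_{d-k}\cap C_{y}P=\{0\}$.

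The crux is the following lemma, which I expect to be the main obstacle: \emph{for generic $H_{d-k+1}$ one already has $H_{d-k+1}\cap C_{y}P=\{0\}$}, that is, the polar-defining plane is itself transversal at the vertex to the tangent cone of the polar variety it cuts out. This is not formal, as the plane and the cone are coupled, and I would prove it by a dimension count set up over $\xi^{-1}(y)$ so that the conormal geometry can be used. Recall that a point of $\xi^{-1}(y)$ records a limit secant direction $l$ contained in a limit tangent hyperplane $H$, so $l\subseteq H$ always; hence $\bp(C_{y}P)=q_{1}(q_{2}^{-1}(\check H^{d-k}))=\{\,l:\exists\,H\supseteq H_{d-k+1}\text{ with }(l,H)\in\xi^{-1}(y)\,\}$. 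Let $\mathbb{G}$ be the Grassmannian of codimension-$(d-k+1)$ linear subspaces of $\bc^{n}$ and set $\mathcal{I}=\{\,((l,H),L)\in\xi^{-1}(y)\times\mathbb{G}:\;l\subseteq L\subseteq H\,\}$. The first projection $\mathcal{I}\to\xi^{-1}(y)$ has all fibers isomorphic to the Grassmannian of $(n-d+k-2)$-planes in $H/l\cong\bc^{n-2}$, of dimension $(d-k)(n-d+k-2)$; using the uniform bound $\dim\xi^{-1}(y)\le n-2$ --- the fibre $\xi^{-1}(y)$ sits inside the exceptional divisor of $\hat e_{\{y\}}$, hence inside a divisor of the $(n-1)$-dimensional space $E_{\{y\}}C(X)$ --- one computes $\dim\mathcal{I}\le(n-2)+(d-k)(n-d+k-2)=\dim\mathbb{G}-1$, so the second projection $\mathcal{I}\to\mathbb{G}$ is not dominant. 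Therefore for $H_{d-k+1}$ in a suitable dense open subset of $\mathbb{G}$ there is no $(l,H)\in\xi^{-1}(y)$ with $l\subseteq H_{d-k+1}\subseteq H$, which by the displayed description of $\bp(C_{y}P)$ is exactly $\bp(C_{y}P)\cap\bp(H_{d-k+1})=\emptyset$. (Alternatively one can deduce the lemma from the finiteness of the polar map $\pi_{H_{d-k+1}}\colon P\to\bc^{d-k+1}$ in Teissier's theory, since then $\ker(\pi_{H_{d-k+1}})\cap C_{y}P=H_{d-k+1}\cap C_{y}P=\{0\}$; I would present the incidence-count version, as it is self-contained given what has been recalled.)

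Granting the lemma, let $p\colon\bc^{n}\to W:=\bc^{n}/H_{d-k+1}\cong\bc^{d-k+1}$ be the quotient. Since $\ker p\cap C_{y}P=\{0\}$ and $C_{y}P$ is a cone, $p$ restricts to a finite morphism on $C_{y}P$, so the Zariski closure $\overline{p(C_{y}P)}$ is a cone of dimension $d-k$ in $W$ and its projectivization is a proper closed subvariety of $\bp(W)\cong\bp^{d-k}$. The codimension-$(d-k)$ planes $H_{d-k}$ containing $H_{d-k+1}$ correspond bijectively to the points of $\bp(W)$ via $H_{d-k}\mapsto[H_{d-k}/H_{d-k+1}]$; choosing $H_{d-k}$ to correspond to a point outside $\bp(\overline{p(C_{y}P)})$ --- a dense open condition --- a putative nonzero $x\in C_{y}P\cap H_{d-k}$, written $x=h+t\ell_{0}$ with $h\in H_{d-k+1}$ and $\ell_{0}$ spanning a line complementary to $H_{d-k+1}$ in $H_{d-k}$, leads to a contradiction: if $t=0$ then $x\in H_{d-k+1}\cap C_{y}P=\{0\}$ by the lemma, and if $t\ne0$ then $[p(x)]=[H_{d-k}/H_{d-k+1}]$ lies in $\bp(\overline{p(C_{y}P)})$, against the choice. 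Hence $C_{y}P\cap H_{d-k}=\{0\}$, i.e. $H_{d-k}$ meets $P$ transversally at $y$. Since all the conditions imposed on $H_{d-k+1}$ and then on $H_{d-k}$ are dense open, a generic flag $H_{d-k+1}\subseteq H_{d-k}$ works, which is the claim.
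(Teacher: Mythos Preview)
The paper does not give its own proof of this proposition; it is stated as a quoted result of Henry--Merle \cite{hm} and then used, so there is no in-paper argument to compare against. What I can do is assess your argument on its own terms.

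Your reduction is sound: for a germ $(P,y)$ of pure dimension $d-k$ and a linear space of complementary dimension, ``transversal at $y$'' in this context means $H_{d-k}\cap C_{y}P=\{0\}$, which is the standard multiplicity-minimality criterion you invoke. The second step (passing from $H_{d-k+1}$ to $H_{d-k}$ via the quotient $W=\bc^{n}/H_{d-k+1}$) is routine linear algebra once the lemma is known.

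The incidence-variety proof of the lemma is correct. Two remarks. First, you do not actually need the statement ``$l\subseteq H$ for every $(l,H)\in\xi^{-1}(y)$'' (the analytic Whitney lemma): over points where $l\not\subseteq H$ the fibre of $\mathcal{I}\to\xi^{-1}(y)$ is empty, which only improves the bound. The place where it matters --- showing that a bad $L$ lies in the image of $\mathcal{I}\to\mathbb{G}$ --- follows because $l\subseteq L\subseteq H$ already forces $l\subseteq H$. Second, your numerics check: with $a=n-d+k-1$, $b=d-k$ one has $\dim\mathbb{G}=a(b+1)$ and $(n-2)+(a-1)b=a+b-1+ab-b=a(b+1)-1$, so indeed $\dim\mathcal{I}\le\dim\mathbb{G}-1$ and the projection to $\mathbb{G}$ is not dominant. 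Intersecting with the dense open of Proposition~\ref{teissier local polar} (where $\bp(C_{y}P)=q_{1}(q_{2}^{-1}(\check H^{d-k}))$ holds) then gives the conclusion.

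Your parenthetical alternative via finiteness of $\pi|_{P}$ is also valid, but the step ``finite $\Rightarrow$ $\ker\pi\cap C_{y}P=\{0\}$'' deserves a word: finiteness of $\pi|_{P}:(P,y)\to(\bc^{d-k+1},0)$ passes to the associated graded, so $\pi|_{C_{y}P}$ is finite, and a finite linear projection of a cone has trivial kernel-intersection. Since you present the incidence count as the main line, this is only a side comment. Overall the argument is complete and in the spirit of the normal/conormal machinery the paper recalls from \cite{teissier polar 2}.
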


In particular, a sufficiently small translation of the plane $H_{d-k}$ intersects $P_{y}(X, H_{d-k+1})$ transversally. Note that they are of complementary dimensions, hence the multiplicity $m_{k}(X,y)$ equals their intersection number.

\paragraph{(3) Whitney regularity along the equisingular strata}

Recall that two germs of plane curve singularities $(C^{\circ}_{1}, 0), (C^{\circ}_{2},0)$ are said to be \emph{of the same topological type} if there exist representatives $C_{i}\subset U_{i}$ of $C^{\circ}_{i}$ inside open neighbourhood $U_{i}\subset \bc^{2}$ of $0$, $i=1,2$, such that there exists homeomorphism $U_{1}\sim U_{2}$ sending $C_{1}$ to $C_{2}$.

\begin{thm}[Zariski \cite{zariski equisingularity}, Teissier \cite{teissier resolution}]\label{equisingularity}

Let $S$ be a germ of smooth complex analytic variety, let $(C, 0)\subset (S\times \bc^{2}, 0)$ be a germ of reduced plane curves over $S$, let $S'$ be $S\times \{0\}$. Suppose that the family $C\to S$ is smooth outside the section $S'$. 
The following conditions are equivalent:
\begin{enumerate}[topsep=2pt, noitemsep, label=\textup{(\arabic*)}]
\item

All the fibers $C_{s}$ have the same topological type.

\item

All the fibers $C_{s}$ have the same Milnor number.

\item

The $\delta$-invariant and the number of branches of $C_{s}$ are independent of $s\in S$.

\item

For any two points $s_{1},s_{2}$ of $S$, there exists a bijection between the set of branches of $C_{s_{1}}$ and $C_{s_{2}}$ which preserve the Puiseux characteristics of the branches and the intersection number between any two distinct branches.

\item

Let $\phi:\widetilde{C}\to C$ be the normalization of $C$. The composite $\widetilde{C}\xrightarrow{\phi} C\to S$ is  submersive at a neighbourhood of $\phi^{-1}(S')$, which induces a simultaneous resolution of singularity for all the fibers $C_{s}$, such that the restriction $\phi:\phi^{-1}(S')_{\rm red}\to S'$ is a trivial covering.  

\item

The pair $(C-S', S')$ satisfies Whitney's conditions.

\end{enumerate}

\end{thm}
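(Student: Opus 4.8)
The plan is to run the cycle of implications $(3)\Rightarrow(2)\Rightarrow(1)\Rightarrow(4)\Rightarrow(3)$ among the intrinsic conditions, and then to attach $(5)$ to the cycle through $(3)$ and $(6)$ through $(1)$. Three classical inputs do the heavy lifting: Zariski's topological classification of plane curve germs, Milnor's genus formula $\mu=2\delta-r+1$ (with $r$ the number of branches), and Teissier's theorems on simultaneous normalization and on polar multiplicities, the pertinent form of the latter being the criterion recalled above as Theorem~\ref{whitney criteria}.

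First the cycle. $(3)\Rightarrow(2)$ is immediate from Milnor's formula applied to the plane curve germ of $C_s$ at the origin. $(2)\Rightarrow(1)$ is the curve case of the $\mu$-constant theorem (due to Zariski in this setting): a $\mu$-constant family of reduced plane curves with section $S'$ is topologically trivial along $S'$, which one verifies by reduction to arcs in $S$, i.e. to one parameter; in particular all fibres $C_s$ are homeomorphic. $(1)\Rightarrow(4)$ is Zariski's classification: the topological type of a plane curve germ is equivalent to the datum of the Puiseux characteristic exponents of its branches together with the pairwise intersection multiplicities (equivalently, to the weighted dual graph of the minimal embedded resolution), so a homeomorphism between fibres induces a bijection of branches preserving that datum. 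Finally $(4)\Rightarrow(3)$: the $\delta$-invariant of $C_s$ equals $\sum_i\delta_i+\sum_{i<j}(\Gamma_i\cdot\Gamma_j)$ over the branches $\Gamma_i$, with $\delta_i$ the number of gaps of the value semigroup of $\Gamma_i$ (read off its Puiseux characteristic), and the number of branches is part of the datum; both are therefore constant, closing the cycle.

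Next $(3)\Leftrightarrow(5)$. Assuming $(3)$, constancy of $\delta$ puts us in the situation of Teissier's simultaneous normalization theorem \cite{teissier resolution}: $\phi:\widetilde C\to C$ commutes with base change over $S$ and $\widetilde C\to S$ is flat with fibres the normalizations $\widetilde{C_s}$; these fibres being smooth curves and $S$ being smooth, $\widetilde C$ is smooth near $\phi^{-1}(S')$ and $\widetilde C\to S$ is submersive there. The reduced preimage $\phi^{-1}(S')_{\rm red}$ is finite over $S'$ of degree the number of branches of $C_s$, constant by $(3)$, and being finite of constant degree between smooth germs with reduced fibres it is \'etale, hence a trivial covering over the contractible base $S'$; this is $(5)$. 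Conversely, from $(5)$ the sheaf $\phi_*\co_{\widetilde C}$ is $\co_S$-flat near $\phi^{-1}(S')$; since $\co_C$ is $\co_S$-flat and $\phi$ is finite (so $\phi_*$ commutes with base change), the long exact $\mathrm{Tor}$-sequence attached to $0\to\co_C\to\phi_*\co_{\widetilde C}\to\phi_*\co_{\widetilde C}/\co_C\to 0$ shows the quotient is $\co_S$-flat, hence of locally constant fibre length $\delta(C_s)$, while the number of branches of $C_s$ is the constant degree of the trivial covering $\phi^{-1}(S')_{\rm red}\to S'$; this gives $(3)$.

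Finally $(1)\Leftrightarrow(6)$. For $(6)\Rightarrow(1)$: in the stratification $\{(S\times\bc^2)\setminus C,\ C\setminus S',\ S'\}$ of $S\times\bc^2$ the two pairs involving the open stratum satisfy Whitney's conditions automatically, so $(6)$ makes the whole stratification Whitney, and Thom's first isotopy lemma (Theorem~\ref{thom-mather}) trivializes the family $(S\times\bc^2,C)\to S$ topologically along $S'$, so the $C_s$ all share a topological type. For $(1)\Rightarrow(6)$: by Teissier's criterion (Theorem~\ref{whitney criteria}), Whitney regularity of $(C\setminus S',S')$ along $S'$ is equivalent to local constancy of the polar multiplicities $m_k(C,(s,0))$; for a relative plane curve Teissier computes these in terms of the topological type of the fibre $C_s$ — concretely $m_0$ is the multiplicity of $C_s$ at $0$ and $m_1=\mu(C_s)+m_0-1$, the remaining $m_k$ being likewise equisingularity invariants — so $(1)$ forces them constant. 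The expected main obstacle is exactly this last implication $(1)\Rightarrow(6)$: topological equisingularity is a priori strictly weaker than Whitney regularity (Pham's example, quoted in the introduction, shows the gap is genuine in the neighbouring compactified-Jacobian setting), and what saves the day for plane curves is the lucky circumstance that a plane curve germ's complete topological invariant already pins down the polar multiplicities governing Whitney's conditions. If one declines to import Zariski's classification and $\mu$-constant theorem, those two classical results are themselves the deepest ingredient.
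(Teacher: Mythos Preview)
The paper does not prove this theorem; it is quoted as a classical result of Zariski and Teissier with references \cite{zariski equisingularity}, \cite{teissier resolution}, and no argument is supplied. So there is no ``paper's proof'' to compare against, and your task was really to reconstruct a route through the literature.

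Your cycle $(3)\Rightarrow(2)\Rightarrow(1)\Rightarrow(4)\Rightarrow(3)$ together with the attachments $(3)\Leftrightarrow(5)$ and $(1)\Leftrightarrow(6)$ is the standard one, and the ingredients you invoke (Milnor's formula $\mu=2\delta-r+1$, the $\mu$-constant theorem for plane curves, Zariski's topological classification, Teissier's simultaneous normalization, Thom's first isotopy lemma, and Teissier's polar/$\mu^{*}$ criterion) are the right ones. A couple of points deserve tightening. First, in $(1)\Rightarrow(6)$ the total space $C$ has dimension $\dim(S)+1$, so Theorem~\ref{whitney criteria} asks for constancy of $\dim(S)+1$ polar multiplicities, not just $m_{0}$ and $m_{1}$; what makes this go through is Teissier's identification of the polar multiplicities of the family with the $\mu^{*}$-sequence of the fibre $C_{s}$, and for a plane curve $\mu^{*}=(\mu,\,m-1,\,1)$ with $m$ the multiplicity --- both topological invariants --- so your conclusion stands, but the sentence ``the remaining $m_{k}$ being likewise equisingularity invariants'' is doing real work and should point to Teissier's $\mu^{*}$ theorem rather than be left implicit. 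Second, your aside about Pham's example slightly misdescribes it: Pham's example concerns the equisingular stratification of the base of a miniversal deformation failing to be Whitney, not the compactified Jacobian setting per se; the relevance here is simply that ``equisingular'' and ``Whitney'' genuinely diverge once one leaves the total space of a plane curve family, which is why the implication $(1)\Rightarrow(6)$ needs Teissier's polar input and is not formal.
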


The family $C\to S$ will be called an \emph{equisingular deformation} if it satisfies any of the  conditions in the theorem.

\begin{thm}[Teissier \cite{zariski plane branch}, Appendix, \S 3.1]

Let $\psi: C\to B$ be a miniversal deformation of a plane curve singularity $C_{0}$, let $\Delta_{\mu}\subset B$ be the closed subscheme which parametrizes all the equisingular deformations of $C_{0}$, then $\Delta_{\mu}$ is smooth over $\bc$. Moreover, let $\Sigma$ be the critical locus of $\psi$ and $\Sigma_{\mu}=(\psi|_{\Sigma})^{-1}(\Delta_{\mu})_{\rm red}$, then the restriction $\psi:\Sigma_{\mu}\to \Delta_{\mu}$ is an isomorphism.  

\end{thm}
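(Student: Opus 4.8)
We would derive both assertions from the characterization of equisingularity by very weak simultaneous resolution, i.e. from condition $(5)$ of Theorem \ref{equisingularity}: for a deformation $C\to S$ over a reduced base, equisingularity along the singular section is equivalent to the normalization $\widetilde C$ of the total space being smooth over $S$, commuting with base change, with $\phi^{-1}(S')_{\red}\to S'$ a trivial covering. The feature we extract is \emph{uniqueness} of the simultaneous resolution: an equisingular deformation of $C_{0}$ carries a canonical one, namely the normalization of its total space. Consequently the equisingular deformation problem for $C_{0}$ is equivalent to a \emph{parametrized} deformation problem — deforming the normalization map — whose deformation theory is unobstructed because the source is a disjoint union of smooth one-dimensional germs. (We may assume $C_{0}$ is singular, the smooth case being empty of content.)

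\textbf{Smoothness of $\Delta_{\mu}$.} Write $C_{0}=\bigcup_{i}C_{0,i}$ as a union of branches and let $\underline\nu\colon \coprod_{i}\widetilde C_{0,i}\to (\bc^{2},0)$ be the composite of the normalization with the inclusion, each $\widetilde C_{0,i}$ a disc carrying a Puiseux parametrization $t\mapsto (t^{n_{i}},\sum_{j}a_{i,j}t^{j})$. First we note that the deformation functor $\defm_{\underline\nu}$ deforming the map $\underline\nu$ (trivial family of discs as source, fixed target) is unobstructed — a versal object is written down explicitly by letting the Puiseux coefficients vary — so it has a smooth miniversal base $\widetilde B$. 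Inside $\defm_{\underline\nu}$ we single out the closed subfunctor $\defm^{\mathrm{es}}_{\underline\nu}$ of those parametrized deformations that keep the semigroup of each branch and the pairwise intersection multiplicities fixed; the combinatorial analysis of Zariski and Teissier \cite{zariski plane branch} shows that these constraints cut out a smooth subscheme of $\widetilde B$ (indeed a coordinate subspace, in Teissier's semigroup-adapted versal deformation), so $\defm^{\mathrm{es}}_{\underline\nu}$ is smooth. Taking the image curve of a parametrized deformation gives a natural transformation $\defm^{\mathrm{es}}_{\underline\nu}\to \defm^{\mathrm{top}}_{C_{0}}$ which lands in the equisingular locus and whose image is flat over the base — flatness because $\delta$ is constant along $\defm^{\mathrm{es}}_{\underline\nu}$ and $\delta$-constancy forces the normalization to commute with base change \cite{teissier resolution} — and by the uniqueness of the simultaneous resolution this transformation is a closed immersion of functors whose image is exactly the subfunctor of equisingular deformations of $C_{0}$. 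Since $\psi\colon C\to B$ is miniversal, this identifies $\Delta_{\mu}$ with the smooth space $\defm^{\mathrm{es}}_{\underline\nu}$, and one reads off $T_{0}\Delta_{\mu}$ as the equisingularity ideal modulo the Tjurina ideal. (Alternatively we would quote Wahl's theorem that the equisingular deformation functor of a plane curve singularity is unobstructed.)

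\textbf{The isomorphism $\psi\colon \Sigma_{\mu}\to\Delta_{\mu}$.} We would show this morphism is a finite bijection onto a normal target. A plane curve singularity is an isolated hypersurface singularity, so by the purity of the discriminant (Teissier, recalled above) the restriction $\psi|_{\Sigma}\colon \Sigma\to\Delta$ is finite — it is the normalization of $\Delta$ — hence $\psi|_{\Sigma_{\mu}}\colon \Sigma_{\mu}\to\Delta_{\mu}$ is finite and surjective. For $b\in\Delta_{\mu}$ the fibre $C_{b}$ has, near the singular section, the same topological type as the germ $C_{0}$, hence a single singular point, so $\psi|_{\Sigma}$ has exactly one point over each such $b$; thus $\psi|_{\Sigma_{\mu}}$ is in addition injective. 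A finite bijective morphism is a homeomorphism, so $\Sigma_{\mu}$ — reduced by construction — is irreducible, being homeomorphic to the smooth irreducible $\Delta_{\mu}$; in characteristic zero a finite bijective morphism between integral schemes is birational, and a finite birational morphism onto the normal scheme $\Delta_{\mu}$ is an isomorphism.

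\textbf{Main obstacle.} The delicate point is the smoothness of $\Delta_{\mu}$: one must identify the ``equisingular exponents'' of the branches and verify that fixing the equisingularity type of the parametrization is a smooth (even linear, in good coordinates) condition on the Puiseux coefficients, and then match this parametrized picture with the scheme structure $\Delta_{\mu}$ inherits from $B$ — which is precisely where the uniqueness of the simultaneous resolution, via $\delta$-constancy and compatibility with base change, is used. Once $\Delta_{\mu}$ is known to be smooth, hence normal, the identification $\Sigma_{\mu}\cong\Delta_{\mu}$ is routine.
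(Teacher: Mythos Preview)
The paper does not supply its own proof of this statement; it is quoted as a result of Teissier from the appendix to \cite{zariski plane branch} and then immediately applied to the family $\pi:\cc\to\cb$. So there is no in-paper argument to compare against.

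Your sketch is essentially Teissier's own approach in that reference. The heart of his proof of the smoothness of $\Delta_{\mu}$ is precisely the identification you describe: via simultaneous normalization (condition (5) of the equisingularity theorem), the equisingular deformation functor of $C_{0}$ is equivalent to a functor of deformations of the parametrization $\underline{\nu}$, and the latter is manifestly smooth because one is deforming maps from smooth discs, with the equisingularity constraints becoming linear conditions on Puiseux coefficients in the semigroup-adapted coordinates. Your alternative of invoking Wahl's theorem on unobstructedness of the equisingular deformation functor is also legitimate.

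Your argument for $\Sigma_{\mu}\cong\Delta_{\mu}$ is correct and is the natural one: finiteness of $\psi|_{\Sigma}$ comes from the purity theorem recalled in the paper, bijectivity over $\Delta_{\mu}$ follows because an equisingular deformation of a single germ has, by definition, a unique singular point in each fibre, and then finite birational onto the smooth (hence normal) $\Delta_{\mu}$ forces an isomorphism. The only point worth making explicit is that the ``single singular point'' claim is built into the definition of $\Delta_{\mu}$ as parametrizing \emph{equisingular} deformations in the sense of Theorem~\ref{equisingularity}, whose hypotheses already require the singular locus to be a section; you use this implicitly and it is fine, but it is the one place a reader might pause.
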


The theorem applies to our family $\pi:\cc\to \cb$.
Let $\Delta_{\mu}\subset \Delta$ be the closed subscheme parametrizing equisingular deformations of $C_{\gamma}$, and let $\Sigma_{\mu}:=(\pi|_{\Sigma})^{-1}(\Delta_{\mu})_{\rm red}$. 
By construction, $\Delta_{\mu}$ is contained in the $\delta$-stratum $\cb_{\delta_{\gamma}}$, hence it is contained in the closure of all the strict $\delta$-strata $\cb_{\delta}^{\circ}$.

\begin{thm}\label{key regularity}

The union of all the strict $\delta$-strata $\cb_{\delta}^{\circ}$ is Whitney regular over $\Delta_{\mu}$.

\end{thm}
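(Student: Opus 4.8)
The plan is to verify Whitney's conditions one stratum at a time. Since $\Delta_\mu\subset\cb_{\delta_\gamma}$ lies in the closure of every strict $\delta$-stratum $\cb_{\delta}^{\circ}$ with $1\le\delta\le\delta_\gamma$, it suffices to show that each pair $(\cb_{\delta}^{\circ},\Delta_\mu)$ satisfies Whitney's conditions at every point $y\in\Delta_\mu$. Set $Z_\delta=\overline{\cb_{\delta}^{\circ}}$, a reduced closed subvariety of $\cb=\ba^{\tau_\gamma}$ which is smooth along $\cb_{\delta}^{\circ}$; if $Z_\delta$ happens to be smooth along $\Delta_\mu$ the pair is trivially Whitney regular, so assume $\Delta_\mu\subset Z_\delta^{\mathrm{sing}}$. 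Because $\cb_{\delta}^{\circ}$ is then open in $Z_\delta^{\reg}$, it is enough to establish Whitney regularity of $(Z_\delta^{\reg},\Delta_\mu)$, for which I would apply Teissier's criterion (Theorem \ref{whitney criteria}) with $X=Z_\delta$ and $Y=\Delta_\mu$: the pair is Whitney regular at $y$ precisely when $\dim\xi^{-1}(y)=\tau_\gamma-2-\dim\Delta_\mu$ for the normal--conormal modification $\xi:E_{\Delta_\mu}C(Z_\delta)\to Z_\delta$ of diagram (\ref{normal-conormal}), equivalently when the polar multiplicities $m_k(Z_\delta,y)$ do not depend on $y\in\Delta_\mu$.

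The case $\delta=1$ is the heart of the matter, and it is where the smoothness of the Nash blow-up of $\Delta$ enters directly. Here $Z_1=\overline{\cb_1^{\circ}}=\Delta$: the one--nodal locus is dense in $\Delta$, because $\pi|_\Sigma:\Sigma\to\Delta$ is the normalization and, as seen in the proof of the purity theorem of Teissier \cite{teissier hunting}, the fiber of $\pi$ over a generic point of $\Delta$ has a single node; moreover $\cb_1^{\circ}\subset\Delta^{\reg}$. Since $\Delta$ is a hypersurface in $\cb$, its projective conormal space $C(\Delta)$ is canonically identified with its Nash blow-up, which by the hypersurface case of Theorem \ref{nash nature} (Remark \ref{nash nature hypersurface}) is exactly $\Sigma$, with conormal projection $\kappa=\pi|_\Sigma$; in particular $C(\Delta)$ is smooth. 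By Teissier \cite{zariski plane branch} the reduced preimage $\Sigma_\mu=(\pi|_\Sigma)^{-1}(\Delta_\mu)_{\red}$ is smooth and $\pi|_\Sigma$ induces an isomorphism $\Sigma_\mu\xrightarrow{\sim}\Delta_\mu$. As a closed immersion of smooth varieties is regular, one checks that $\xi:E_{\Delta_\mu}C(\Delta)\to\Delta$ is the blow-up of the smooth variety $\Sigma$ along the smooth center $\Sigma_\mu$; its exceptional divisor is a $\bp^{c-1}$-bundle over $\Sigma_\mu$ with $c=(\tau_\gamma-1)-\dim\Delta_\mu$, and since $\Sigma_\mu\to\Delta_\mu$ is an isomorphism, $\xi^{-1}(y)$ is the bundle fiber over the unique point of $\Sigma_\mu$ lying over $y$, of dimension $c-1=\tau_\gamma-2-\dim\Delta_\mu$. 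Teissier's criterion then yields that $(\Delta^{\reg},\Delta_\mu)$, hence $(\cb_1^{\circ},\Delta_\mu)$, is Whitney regular.

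For $\delta\ge 2$ the plan is to propagate this via a conormal description of $Z_\delta$ built from $\Sigma=C(\Delta)$. At a $\delta$-nodal, hence generic, point $s$ of $Z_\delta$, the discriminant is locally the union of $\delta$ smooth hypersurface branches, one per node of $C_s$, $Z_\delta$ is their transverse intersection, and the hyperplanes containing $T_sZ_\delta$ form exactly the projective span in $\check{\bp}^{\tau_\gamma-1}$ of $\lambda(P_1),\dots,\lambda(P_\delta)$, where $P_1,\dots,P_\delta\in\Sigma$ are the distinct preimages of $s$. Hence $C(Z_\delta)$ is the image, under $(P_1,\dots,P_\delta,[\varphi])\mapsto(\pi(P_1),[\varphi])$, of the closure $\widetilde{C}_\delta$ of the locus of those $(P_1,\dots,P_\delta,[\varphi])$ with the $P_i\in\Sigma$ pairwise distinct, $\pi(P_1)=\dots=\pi(P_\delta)$ and $[\varphi]\in\langle\lambda(P_1),\dots,\lambda(P_\delta)\rangle$; thus $\widetilde{C}_\delta$ is (the closure of) a $\bp^{\delta-1}$-bundle over the closure $\Sigma^{(\delta)}$ of the off-diagonal $\delta$-fold fiber power of $\Sigma$ over $\cb$, and $\widetilde{C}_\delta\to C(Z_\delta)$ is proper and generically finite of degree $\delta!$. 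To understand the normal--conormal fiber of $(Z_\delta,\Delta_\mu)$ over $y\in\Delta_\mu$, where the $P_i$ and the lines $\lambda(P_i)$ all coalesce along the small diagonal, I would use that the restriction of $\pi$ over $\Delta_\mu$ is an equisingular deformation of $C_\gamma$ (Theorem \ref{equisingularity}): together with the openness of versality (as in Remark \ref{local acyclicity intrinsic}) this should trivialize $\Sigma$, $\Sigma^{(\delta)}$, $\widetilde{C}_\delta$, $C(Z_\delta)$ and their normal--conormal modifications along $\Delta_\mu$, so that $\dim\xi^{-1}(y)$ is constant on $\Delta_\mu$; a computation in the single-singularity model — reducing once more, as in the case $\delta=1$, to the smoothness of $\Sigma$ and $\Sigma_\mu$ — would then show this dimension equals $\tau_\gamma-2-\dim\Delta_\mu$, and Teissier's criterion would give Whitney regularity of $(\cb_{\delta}^{\circ},\Delta_\mu)$.

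The hard part will be this last step for $\delta\ge 2$: the fiber power $\Sigma^{(\delta)}$ of the finite morphism $\Sigma\to\cb$ need not be smooth near the small diagonal over $\Delta_\mu$ — that morphism can drop rank by a lot there — so one must check that, after the equisingular trivialization along $\Delta_\mu$, the normal--conormal fiber still has the expected dimension $\tau_\gamma-2-\dim\Delta_\mu$. This is exactly the point at which the smoothness of the Nash blow-up of the discriminant (Theorem \ref{nash nature}) does the essential work, transported to the strict $\delta$-strata through the conormal description above. Should the trivialization argument prove delicate to make precise, an alternative is to combine Henry--Merle's transversality (Proposition \ref{hm transversal}) with Teissier's criterion: compute $m_k(Z_\delta,y)$ as an intersection number in a generic linear slice through $y$, and verify that this number depends only on the equisingularity type of $C_y$, which is constant along $\Delta_\mu$ by Theorem \ref{equisingularity}.
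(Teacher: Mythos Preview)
Your treatment of the case $\delta=1$ is essentially the paper's: identify $C(\Delta)$ with the Nash blow-up, which by Remark~\ref{nash nature hypersurface} equals the smooth variety $\Sigma$, observe that $\Sigma_\mu\to\Delta_\mu$ is an isomorphism of smooth varieties, and invoke Teissier's criterion (Theorem~\ref{whitney criteria}). This part is fine.

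For $\delta\ge 2$, however, your proposal is incomplete --- you yourself flag the ``hard part'' of controlling the normal--conormal fiber through the conormal description $\widetilde{C}_\delta\to C(Z_\delta)$, and the suggested fixes (equisingular trivialization of $\Sigma^{(\delta)}$ along $\Delta_\mu$, or a polar-multiplicity computation via Henry--Merle) are not carried out. The difficulty you anticipate is real: over $\Delta_\mu$ the $\delta$ preimages in $\Sigma$ coalesce, so $\Sigma^{(\delta)}$ degenerates badly along the small diagonal and the $\bp^{\delta-1}$-bundle description of $\widetilde{C}_\delta$ breaks down exactly where you need it.

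The paper bypasses all of this with an elementary observation you missed: near a point of $\cb_\delta^{\circ}$ the discriminant is a union of $\delta$ smooth transversal hypersurface branches $V_1,\dots,V_\delta$, and $\cb_\delta^{\circ}$ is their intersection, so $T_x\cb_\delta^{\circ}=\bigcap_j T_xV_j$. Since each $V_j$ is a local piece of $\Delta$ and the pair $(\Delta^{\reg},\Delta_\mu)$ is already known to be Whitney regular (your $\delta=1$ case), the Whitney conditions for $(\cb_\delta^{\circ},\Delta_\mu)$ follow directly: for sequences $x_i\in\cb_\delta^{\circ}$, $y_i\in\Delta_\mu$ both converging to $y$, the secant limit $\lim\overline{x_iy_i}$ lies in each $\lim T_{x_i}V_j$, hence in their intersection $\lim T_{x_i}\cb_\delta^{\circ}$; similarly for condition~(a). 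No conormal computation for $Z_\delta$ is needed at all --- the entire burden rests on the $\delta=1$ case and the transversality of the branches.
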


\begin{proof}

The stratum $\cb_{0}^{\circ}$ is nothing but $\cb-\Delta$. As both $\cb$ and $\Delta_{\mu}$ are smooth over $\bc$, the stratum $\cb_{0}^{\circ}$ is Whitney regular over $\Delta_{\mu}$.

The stratum $\cb_{1}^{\circ}$ is just the smooth locus $\Delta^{\reg}$. By theorem \ref{nash nature} and remark \ref{nash nature hypersurface}, the Nash blow-up of $\Delta$, which is also the projective conormal space $C(\Delta)$, is smooth over $\bc$, and it can be identified with the critical locus $\Sigma$.  As both $\Sigma$ and $\Sigma_{\mu}$ are nonsingular, the condition $(2)$ in theorem \ref{whitney criteria} is satisfied for the pair $(\Delta, \Delta_{\mu})$, whence the Whitney regularity of $\cb_{1}^{\circ}$ over $\Delta_{\mu}$.

For the other strata $\cb_{\delta}^{\circ}, \delta=2,\cdots, \delta_{\gamma}$, note that they appear as self-intersections of $\cb_{1}^{\circ}$, and that the intersections are all transversal.  With the above result for $\cb_{1}^{\circ}$, any local component of $\cb_{1}^{\circ}$ passing through $\cb_{\delta}^{\circ}$ is Whitney regular over $\Delta_{\mu}$. 
This implies the Whitney conditions for the pair $(\cb_{\delta}^{\circ}, \Delta_{\mu})$. 
Indeed, for any point $y\in \Delta_{\mu}$, let $U$ be any small neighbourhood of $y$ in $\cb$ which intersects non-trivially with $\Delta_{\mu}$ and with all the local components of $\cb_{1}^{\circ}$ passing through $\cb_{\delta}^{\circ}$. Let $V_{1},\cdots, V_{\delta}$ be the intersection of $U$ with these local components. Then for any sequences $\{y_{i}\}_{i\in \bn}\subset \Delta_{\mu}\cap U, \{x_{i}\}_{i\in \bn}\subset \cb_{\delta}^{\circ}\cap U$ tending to $y\in \Delta_{\mu}$, we have Whitney's condition $(a)$
$$
\lim_{i\to \infty}T_{y_{i}}(\Delta_{\mu}) \subset \bigcap_{i=1}^{\delta} \lim_{i\to \infty} T_{x_{i}}V_{i}=\lim_{i\to \infty} T_{x_{i}}\cb_{\delta}^{\circ},
$$
and condition $(b)$
$$
\lim_{i\to \infty} \overline{x_{i}y_{i}} \subset \bigcap_{i=1}^{\delta} \lim_{i\to \infty} T_{x_{i}}V_{i}=\lim_{i\to \infty} T_{x_{i}}\cb_{\delta}^{\circ}.
$$

\end{proof}

\paragraph{(4) Proof of the main theorems}

As we have explained in remark \ref{reduce main}, theorem \ref{main CJ} implies theorem \ref{main}. Hence it is enough to prove the former. 

\begin{lem}

Let $\gamma, \gamma'\in \kg[\![\varep]\!]$ be semisimple regular element, if $\gamma$ and $\gamma'$ have the same root valuation datum, then $C_{\gamma}$ and $C_{\gamma'}$ are equisingular.

\end{lem}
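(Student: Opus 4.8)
The plan is to reduce the statement to a comparison of the two germs of plane curve singularities $\spf(\co[\gamma])$ and $\spf(\co[\gamma'])$, and then to read off the relevant numerical invariants of these germs directly from the root valuation datum. Recall that $C_\gamma$ is a rational curve (its normalization is $\bp^1$) with a single planar singularity at $c$, whose completed local ring is $\co[\gamma]=\co[t]/(\chi_\gamma(t))\cong \bc[\![x,y]\!]/(\chi_\gamma(y))$, where $\chi_\gamma$ is the characteristic polynomial of $\gamma$ (which, as $\gamma$ is regular semisimple, coincides with its minimal polynomial and is separable over $F$) and $x=\varep$. By the equivalence $(1)\Leftrightarrow(4)$ in Theorem \ref{equisingularity}, it suffices to show that the root valuation datum of $\gamma$ determines (i) the number of branches of $\spf(\co[\gamma])$, (ii) the Puiseux characteristic of each branch, and (iii) the intersection number of each pair of distinct branches; once this is known, the germs $\spf(\co[\gamma])$ and $\spf(\co[\gamma'])$ are equisingular, and since $C_\gamma$ and $C_{\gamma'}$ are rational curves with a single such singularity they too are equisingular (in particular $C_{\gamma'}$ is realized as a fibre of $\pi\colon\cc\to\cb$ over a point of $\Delta_\mu$).

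First I would set up the dictionary between branches and Galois orbits of eigenvalues. Write $\gamma_1,\dots,\gamma_d\in\overline{F}$ for the (distinct) eigenvalues of $\gamma$, so that $\chi_\gamma=\prod_i(t-\gamma_i)$ factors over $F$ into irreducible polynomials $Q_1,\dots,Q_m$ indexed by the orbits $O_1,\dots,O_m$ of $\gal(\overline{F}/F)$ — equivalently of $\tau$ — on $\{\gamma_i\}$; the branches of $\spf(\co[\gamma])$ are exactly the $\spf(\co[t]/(Q_j))$. The combinatorial datum $w\in W$ records precisely the $\tau$-action on $\{1,\dots,d\}$ (via the ordering used to conjugate $\gamma$ into $\ka(\overline{F})$), hence the orbit partition, hence the number $m$ of branches and the orbit sizes $n_j=\#O_j=\deg Q_j$. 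For the $j$-th branch pick $\gamma_j\in O_j$; then $\gamma_j\in\bc(\!(\varep^{1/n_j})\!)$, and with $t=\varep^{1/n_j}$ one gets the Puiseux parametrisation $x=t^{n_j}$, $y=\gamma_j(t)$, whose conjugates are $\tau^a\gamma_j=\gamma_j(\zeta_{n_j}^a t)$, $0\le a\le n_j-1$.

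Next I would recover the Puiseux characteristics from the ``within-orbit'' part of $r$. For $\gamma_i,\gamma_{i'}$ in a common orbit $O_j$, the value $\val(\gamma_i-\gamma_{i'})$ is one of the numbers $r(\alpha)=\val(\alpha(\gamma'))$, $\alpha\in\Phi$ (namely for the root $\alpha$ attached to the pair $(i,i')$ under the chosen ordering), and by the $\tau$-invariance of $\val$ these are the numbers $\tfrac1{n_j}\val_t\!\big(\gamma_j(t)-\gamma_j(\zeta_{n_j}^{b}t)\big)$ for $1\le b\le n_j-1$, each equal to the least $k$ for which the $t^k$-coefficient of $\gamma_j$ is nonzero and $\zeta_{n_j}^{bk}\ne 1$. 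A classical computation of Zariski (recovery of the characteristic exponents from the contact orders with the conjugate branches) shows that this multiset of contact orders determines, and is determined by, the Puiseux characteristic $(n_j;\beta_1^{(j)},\dots,\beta_{g_j}^{(j)})$ of the branch: the smallest value equals $\beta_1^{(j)}$, the number of indices $b$ attaining it determines $\gcd(n_j,\beta_1^{(j)})$, and one recurses on the $b$ divisible by that gcd. Since the orbit partition is known from $w$, one can isolate the within-orbit part of $r$, so the root valuation datum determines the Puiseux characteristic of each branch.

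Finally, for the ``between-orbit'' part, for $j\ne j'$ the intersection number of the corresponding branches $B_j,B_{j'}$ is
\begin{equation*}
(B_j\cdot B_{j'})=\dim_{\bc}\bc[\![x,y]\!]/(Q_j,Q_{j'})=\val_{\varep}\operatorname{Res}_y(Q_j,Q_{j'})=\sum_{a,b}\val\big(\tau^a\gamma_j-\tau^b\gamma_{j'}\big),
\end{equation*}
using that $Q_j$ is monic in $y$, and each summand is again one of the recorded values $r(\alpha)$ (with $\alpha$ joining an index of $O_j$ to one of $O_{j'}$). Hence all pairwise intersection numbers are determined by $r$. Combining (i)--(iii), two elements $\gamma,\gamma'$ with the same root valuation datum have singularities $\spf(\co[\gamma])$, $\spf(\co[\gamma'])$ with the same number of branches, the same Puiseux characteristics and the same pairwise intersection numbers, so these germs — and hence the spectral curves $C_\gamma$, $C_{\gamma'}$ — are equisingular by Theorem \ref{equisingularity}. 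I expect the genuinely delicate step to be the Zariski computation of the third paragraph, i.e. the bookkeeping that the multiset of contact orders with the Galois conjugates carries exactly the same information as the tuple of characteristic exponents; the rest is either a standard resultant computation, or bookkeeping that is manifestly invariant under the $W$-ambiguity inherent in the root valuation datum (cycle type of $w$, and the multiset of $r$-values grouped by pairs of orbits, are both $W$-conjugation invariant).
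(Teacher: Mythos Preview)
Your proposal is correct and follows essentially the same route as the paper: reduce to condition~(4) of Theorem~\ref{equisingularity}, read off the number of branches from the cycle type of $w$, recover the characteristic exponents of each branch from the within-orbit root valuations, and compute pairwise intersection numbers from the between-orbit root valuations via the resultant (Halphen--Zeuthen) formula.

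The one place where the paper is more careful is the case distinction on whether $\val_t(\gamma_j)\ge n_j$. You call $x=t^{n_j}$, $y=\gamma_j(t)$ ``the Puiseux parametrisation'' and then speak of ``the Puiseux characteristic $(n_j;\beta_1^{(j)},\ldots)$'', but when $\val_t(\gamma_j)<n_j$ this is not the Puiseux parametrisation in the usual sense (the roles of $x$ and $y$ must be swapped), and the first entry of the genuine Puiseux characteristic is then $\val_t(\gamma_j)$ rather than $n_j$. The paper handles this by computing the characteristic pairs of the parametrisation $(t^{n_j},\gamma_j(t))$ in either case and then, when $\val_t(\gamma_j)<n_j$, applying Abhyankar's inversion formula to pass to the actual Puiseux characteristic. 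Your argument is still sound---the multiset of orders $\val_t(\gamma_j(t)-\gamma_j(\zeta_{n_j}^b t))$ is an invariant of the branch and determines its topological type regardless of which variable is ``first''---but you should either say this explicitly or insert the same case split, since as written your sentence identifying $n_j$ as the leading entry of the Puiseux characteristic is not literally true in general.
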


\begin{proof}

By theorem \ref{equisingularity}, $\spf(\co[\gamma])$ and $\spf(\co[\gamma'])$ are equisingular if and only if there is a bijection between the branches of the singularities such that the datum consisting of the Puiseux characteristic of the branches and the intersection numbers between pairs of branches are the same under the bijection.  We will show that the root valuation datum of $\gamma$ determines such datum.

Up to conjugation, we can write $\gamma=\diag(\gamma_{i})_{i=1}^{r}$ with $\gamma_{i}\in \mathfrak{gl}_{d_{i}}(\co)$ such that the characteristic polynomial of $\gamma_{i}$ is irreducible over $F$. We have then the decomposition into irreducible components $\spf(\co[\gamma])=\bigcup_{i=1}^{r}\spf(\co[\gamma_{i}])$.
For each $i$, the eigenvalues of $\gamma_{i}$ will be the Galois conjugates of an element of the form
$\sum_{n=n_{i}}^{\infty}a_{n}\varep^{n/d_{i}}$
with $a_{n}\in \bc$ and $a_{n_{i}}\neq 0$ for some $n_{i}\in \bn_{0}$.

In case that $n_{i}\ge d_{i}$, the Puiseux parametrization of the branch of $\spf(\co[\gamma_{i}])$ is
\begin{equation}\label{puiseux}
\begin{cases}
x=t^{d_{i}},&\\
y=\sum_{n=n_{i}}^{\infty}a_{n}t^{n}.
\end{cases}
\end{equation}
By definition, the Puiseux characteristic is defined inductively as: Let $\beta_{0}=d_{i}$. If $\beta_{0}=1$, the Puiseux characteristic is just $(\beta_{0})=(1)$. If $\beta_{0}>1$, let $\beta_{1}$ be the smallest integer $n$ such that $a_{n}\neq 0$ and $d_{i}\nmid n$. Let $e_{1}=(\beta_{0}, \beta_{1})$. If $e_{1}=1$, the Puiseux characteristic is $(\beta_{0};\beta_{1})$, otherwise let $\beta_{2}$ be the smallest integer $n$ such that $a_{n}\neq 0$ and $e_{1}\nmid \beta_{2}$. In general, if $\beta_{\nu}$ is defined, let $e_{\nu}=(\beta_{0}, \cdots, \beta_{\nu})$. If $e_{\nu}=1$, the Puiseux characteristic is the $(\nu+1)$-tuple $(\beta_{0};\beta_{1},\cdots, \beta_{\nu})$, otherwise let $\beta_{\nu+1}$ be the smallest integer $n$ such that $a_{n}\neq 0$ and $e_{\nu}\nmid \beta_{\nu+1}$. 
This process terminates as $e_{0}>e_{1}>\cdots$, and we get the Puiseux characteristic $(\beta_{0};\beta_{1},\cdots, \beta_{g})$. Let $m_{\nu}=\beta_{\nu}/e_{\nu}$ and $n_{\nu}=e_{\nu-1}/e_{\nu}$ with $e_{0}:=d_{i}$, $\nu=1,\cdots,g$. The sequence of pairs $(m_{1}, n_{1}),\cdots, (m_{g},n_{g})$ are called the \emph{characteristic pairs} of the parametrization (\ref{puiseux}).
Note that we can reconstruct $(\beta_{0};\beta_{1},\cdots, \beta_{g})$ from the characteristic pairs $(m_{1}, n_{1}),\cdots, (m_{g},n_{g})$, hence the two datum are equivalent.  
Indeed, as $e_{g}=1$, we have $\beta_{g}=m_{g}$ and $e_{\nu-1}=n_{\nu}n_{\nu+1}\cdots n_{g}$ and so $\beta_{\nu-1}=m_{\nu-1}\cdot n_{\nu}n_{\nu+1}\cdots n_{g}$ for $\nu=1,\cdots, g$.

Note that $d_{i}=e_{0}=n_{1}\cdots n_{\nu}\cdot e_{\nu}$ and $\beta_{\nu}=e_{\nu}m_{\nu}$, and so $\beta_{\nu}/d_{i}=m_{\nu}/n_{1}\cdots n_{\nu}$. Moreover, $(m_{\nu}, n_{\nu})=(\beta_{\nu}, e_{\nu-1})/e_{\nu}=1$ and $m_{\nu}/n_{1}\cdots n_{\nu}<m_{\nu+1}/n_{1}\cdots n_{\nu+1}$ by construction. 
In terms of the characteristic pairs, we can rewrite
{\small
\begin{align*}
\sum_{n=n_{i}}^{\infty}a_{n}\varep^{n/d_{i}}=\sum_{i=0}^{k_{0}} a_{0, i}\varep^{i}+\sum_{i=0}^{k_{1}} a_{1,i}\varep^{(m_{1}+i)/n_{1}}+\cdots+\sum_{i=0}^{k_{\nu}} a_{\nu,i}\varep^{(m_{\nu}+i)/n_{1}\cdots n_{\nu}}+\cdots+\sum_{i=0}^{+\infty} a_{g,i}\varep^{(m_{g}+i)/n_{1}\cdots n_{g}},
\end{align*}
}

\noindent with $a_{j,i}\in \bc$. As the generator $\tau\in \gal(\bc(\!(\varep^{1/d_{i}})\!)/\bc(\!(\varep)\!))$ sends $\varep^{1/d_{i}}$ to $\zeta_{d_{i}}\varep^{1/d_{i}}$, we obtain that $m_{\nu}/n_{1}\cdots n_{\nu}$ appears as one of the root valuations of $\gamma_{i}$ for all $\nu$, and they are all the root valuations. 
As $(m_{\nu}, n_{\nu})=1$ for all $\nu$, the characteristic pairs $(m_{1}, n_{1}),\cdots, (m_{g},n_{g})$ can be reconstructed from the sequence $(m_{\nu}/n_{1}\cdots n_{\nu})_{\nu=1}^{g}$. Indeed, we get the pair $(m_{1}, n_{1})$ from $m_{1}/n_{1}$, and $(m_{2},n_{2})$ from $n_{1} \cdot \frac{m_{2}}{n_{1}n_{2}}$, and so on. In this way, the root valuation datum of $\gamma_{i}$ determines the characteristic pairs hence also the Puiseux characteristic of the branch $\spf(\co[\gamma_{i}])$ in this case.

In case that $n_{i}<d_{i}$, we still have the parametrization (\ref{puiseux}) for the branch $\spf(\co[\gamma_{i}])$, but it is not the Puiseux parametrization, to get it we need to inverse the role of $x$ and $y$. 
Let $(m_{1}, n_{1}),\cdots, (m_{g},n_{g})$ be the characteristic pairs associated to the parametrization (\ref{puiseux}) as before, let $(m'_{1}, n'_{1}),\cdots, (m'_{g'},n'_{g'})$ be the characteristic pairs for the Puiseux parametrization of $\spf(\co[\gamma_{i}])$.
According to the inversion formula of Abhyankar \cite{Ab}, they are related as follows: $g'=g$, $m'_{1}=n_{1}$, $n'_{1}=m_{1}$ and 
$$
m_{i}'=m_{i}-(m_{1}-n_{1})n_{2}\cdots n_{i},\quad
n_{i}'=n_{i}, \quad \text{for }i=2,\cdots, g.
$$
As before, the root valuation datum of $\gamma_{i}$ determines the characteristic pairs $(m_{1}, n_{1}),\cdots, (m_{g},n_{g})$. With the inversion formula, it determines the characteristic pairs $(m'_{1}, n'_{1}),\cdots, (m'_{g'},n'_{g'})$ hence also the Puiseux characteristic of $\spf(\co[\gamma_{i}])$. 

In any case, the intersection number between the branches $\spf(\co[\gamma_{i}])$ and $\spf(\co[\gamma_{j}])$ is given by the formula of Halphen-Zeuthen (cf. \cite{greuel}, Chap. I, \S3, prop. 3.10): We write eigenvalues of $\gamma_{i}$ and $\gamma_{j}$ in the form
$
u_{k}(\varep^{1/d_{i}d_{j}})
$
and $v_{l}(\varep^{1/d_{i}d_{j}})$ respectively, with $u_{k}\in \bc[\![\varep]\!]$ for $k=1,\cdots, d_{i}$, $v_{l}\in \bc[\![\varep]\!]$ for $l=1,\cdots, d_{j}$. Then the intersection number equals
$$
\sum_{k=1}^{d_{i}}\sum_{l=1}^{d_{j}} \val\big(u_{k}(\varep^{1/d_{i}d_{j}})-v_{l}(\varep^{1/d_{i}d_{j}})\big),
$$
which is again determined by the root valuation datum of $\gamma$.

\end{proof}

For the proof of theorem \ref{main CJ}, by Ng\^o's support theorem and the above lemma,  it is enough to show that the groups $\ch^{*}(j_{!*}\cf^{i})$ are locally constant over $\Delta_{\mu}$ for all $i$.

For any point $z\in \Delta_{\mu}$, we can make microlocal analysis of the fiber $(j_{!*}\cf^{i})_{z}$ with the same process as explained in \S\ref{first reduction} and \S\ref{iterate}. 
The Morse groups for this inductive process are calculated in the same way with the same results. 
By proposition \ref{morse group}, the Morse groups are non-trivial only over the strict $\delta$-strata $\cb_{\delta}^{\circ}$. 
By theorem \ref{key regularity}, the union of the strict $\delta$-strata $\bigcup_{\delta=0}^{\delta_{\gamma}} \cb_{\delta}^{\circ}$ is Whitney regular over $\Delta_{\mu}$, 
hence by theorem \ref{thom-mather} they are locally topologically trivial along it.  
Hence the process of patching up the Morse groups to calculate $\ch^{*}(j_{!*}\cf^{i})_{z}$ as explained in remark \ref{patch up morse groups}
is everywhere the same on $\Delta_{\mu}$, and so the group $\ch^{*}(j_{!*}\cf^{i})_{z}$ must be locally constant on it. This finishes the proof of theorem \ref{main CJ}.

\section{Appendix: Microlocal analysis of the intermediate extension}\label{iterated milnor}

Let $X$ be a normal complex analytic variety of dimension $d$, let $j:U\to X$ be the inclusion of a dense open subvariety, let $\cf$ be a locally constant sheaf on $U$. 
Following ideas of Goresky and MacPherson \cite{gm morse}, Chap. 1, \S1.5, page 17, we explain an inductive process to calculate the fiber $(j_{!*}\cf)_{x}$ at any point $x\in X-U$, we call it \emph{microlocal analysis}\footnote{According to Goresky and MacPherson, this process should be called \emph{(micro)$^{n}$-local analysis}. } of $(j_{!*}\cf)_{x}$.  We fix an embedding $(X,x)\subset (\bc^{N},0)$.

\paragraph{(1) Reduction to the nearby fiber}

Let $\pi:\bc^{N}\to \bc$ be a generic projection, it induces a morphism $p: X_{\{x\}}\to \ba^{1}_{\{0\}}$. Let $\eta$ be the generic point of $\ba^{1}_{\{0\}}$ and let $\bar{\eta}$ be a geometric point over it. Let $R\Psi_{\bar{\eta}}$ (resp. $R\Phi_{\bar{\eta}}$) be the nearby cycle (resp. vanishing cycle) with respect to the projection $p$ at the point $0$, let $T$ be the monodromy action on $R\Psi_{\bar{\eta}}$. Let $i_{x}:\{x\}\hookrightarrow X$ be the natural inclusion.

\begin{thm}[Goresky-MacPherson \cite{gm morse}, Part II, \S6] \label{ic via nearby}

Under the above setting, we have
$$
\ch^{m}((j_{!*}\cf[d])_{{x}})=\begin{cases}
{\rm Ker}\Big\{i_{x}^{*}R^{-1}\Psi_{\bar{\eta}}(j_{!*}\cf[d])\xrightarrow{T-\Id} i_{x}^{!}R^{-1}\Psi_{\bar{\eta}}(j_{!*}\cf[d])\Big\},&\text{ for }m=-1,\\
R^{m}\Psi_{\bar{\eta}}(j_{!*}\cf[d])_{{x}},& \text{ for }-d\le m\le -2.
\end{cases}
$$
Take Poincar\'e dual, we get
$$
\ch^{n}(i_{x}^{!}(j_{!*}\cf[d]))=\begin{cases}
{\rm Coker}\Big\{i_{x}^{*}R^{-1}\Psi_{\bar{\eta}}(j_{!*}\cf[d])\xrightarrow{T-\Id} i_{x}^{!}R^{-1}\Psi_{\bar{\eta}}(j_{!*}\cf[d])\Big\},&\text{ for }n=-1,\\
i_{x}^{!}R^{n}\Psi_{\bar{\eta}}(j_{!*}\cf[d]),& \text{ for }0\le n\le d-2.
\end{cases}
$$
Moreover, we have
$$
R^{m}\Phi_{\bar{\eta}}(j_{!*}\cf[d])_{x}=\begin{cases}
{\rm Im}\Big\{i_{x}^{*}R^{-1}\Psi_{\bar{\eta}}(j_{!*}\cf[d])\xrightarrow{T-\Id} i_{x}^{!}R^{-1}\Psi_{\bar{\eta}}(j_{!*}\cf[d])\Big\},&\text{ for }m=-1,\\
0,& \text{ otherwise}.
\end{cases}
$$

\end{thm}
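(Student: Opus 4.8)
The plan is to recover the three formulas from the standard package of nearby and vanishing cycles attached to the generic linear function $p$, applied to $K:=j_{!*}(\cf[d])$; this is the nearby-cycle transcription of the Morse-theoretic argument of \cite{gm morse}. First I would collect the structural inputs. Since $X$ is normal and $\cf$ is a local system on the dense open $U$, the complex $K$ is a perverse sheaf on $X$ satisfying the strict support and cosupport conditions of an intersection complex. By Gabber's theorem the shifted functors $R\Psi_{\bar{\eta}}[-1]$ and $R\Phi_{\bar{\eta}}[-1]$ are $t$-exact for the perverse $t$-structures of $X$ and of $X_{0}:=p^{-1}(0)$. I would also recall the two fundamental triangles, which in this $[-1]$-shifted normalisation read
\begin{align*}
i_{X_{0}}^{*}K[-1]&\to R\Psi_{\bar{\eta}}K[-1]\xrightarrow{\ \mathrm{can}\ } R\Phi_{\bar{\eta}}K[-1]\xrightarrow{+1},\\
R\Phi_{\bar{\eta}}K[-1]&\xrightarrow{\ \mathrm{var}\ } R\Psi_{\bar{\eta}}K[-1]\to i_{X_{0}}^{!}K[1]\xrightarrow{+1},
\end{align*}
together with $\mathrm{var}\circ\mathrm{can}=T-\Id$ on $R\Psi_{\bar{\eta}}K$ and $\mathrm{can}\circ\mathrm{var}=T-\Id$ on $R\Phi_{\bar{\eta}}K$.

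Second, I would use the genericity of $p$. A generic linear projection is, on the germ of $X$ at $x$, non-characteristic for a Whitney stratification adapted to $(X,U)$ at every point other than $x$ --- the transversality statement underlying stratified Morse theory. Hence $R\Phi_{\bar{\eta}}K$ is supported on $\{x\}$; being $[1]$ applied to a perverse sheaf supported at a point, it is a skyscraper concentrated in degree $-1$, so $R^{m}\Phi_{\bar{\eta}}K_{x}=0$ for $m\neq -1$ --- this is the last assertion away from $m=-1$ --- and $A:=R^{-1}\Phi_{\bar{\eta}}K_{x}$ is a single vector space. I would also note that the sheaf $\ch^{-1}(R\Psi_{\bar{\eta}}K)$, being the perverse-degree-zero cohomology sheaf of the perverse complex $R\Psi_{\bar{\eta}}K[-1]$, is supported in dimension $0$, hence on the germ is a skyscraper at $x$; its stalk and costalk at $x$ therefore both equal $R^{-1}\Psi_{\bar{\eta}}K_{x}$, which is why $i_{x}^{*}$ and $i_{x}^{!}$ are used interchangeably in the statement.

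Third, I would run the two long exact sequences through $x$. Taking stalk cohomology of the first triangle and feeding in the support vanishing $\ch^{m}(i_{x}^{*}K)=0$ for $m\ge 0$ together with $R^{m}\Phi_{\bar{\eta}}K_{x}=0$ for $m\neq -1$, the sequence collapses to isomorphisms $\ch^{m}(i_{x}^{*}K)\cong R^{m}\Psi_{\bar{\eta}}K_{x}$ for $-d\le m\le -2$ and to a short exact sequence $0\to\ch^{-1}(i_{x}^{*}K)\to R^{-1}\Psi_{\bar{\eta}}K_{x}\xrightarrow{\mathrm{can}}A\to0$, whence $\ch^{-1}(i_{x}^{*}K)=\operatorname{Ker}(\mathrm{can})$ and $A=\operatorname{Im}(\mathrm{can})$. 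Dually, taking costalk cohomology of the second triangle and feeding in the cosupport vanishing for $K$, it collapses to isomorphisms $\ch^{n}(i_{x}^{!}K)\cong i_{x}^{!}R^{n}\Psi_{\bar{\eta}}K$ for $0\le n\le d-2$ and to a short exact sequence $0\to A\xrightarrow{\mathrm{var}}i_{x}^{!}R^{-1}\Psi_{\bar{\eta}}K\to\ch^{-1}(i_{x}^{!}K)\to0$, so $\mathrm{var}$ is injective on $A$ and $\ch^{-1}(i_{x}^{!}K)=\operatorname{Coker}(\mathrm{var}|_{A})$. Assembling through $T-\Id=\mathrm{var}\circ\mathrm{can}$: since $\mathrm{can}$ is onto $A$ with kernel $\ch^{-1}(i_{x}^{*}K)$ and $\mathrm{var}|_{A}$ is injective, $\operatorname{Ker}(T-\Id)=\operatorname{Ker}(\mathrm{can})=\ch^{-1}(i_{x}^{*}K)$, $\operatorname{Im}(T-\Id)=\mathrm{var}(A)\cong R^{-1}\Phi_{\bar{\eta}}K_{x}$, and $\operatorname{Coker}(T-\Id)=\operatorname{Coker}(\mathrm{var}|_{A})=\ch^{-1}(i_{x}^{!}K)$. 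This yields all three displays in the statement; the Poincar\'e--Verdier dual on $X$, which interchanges $i_{x}^{*}$ with $i_{x}^{!}$ and $\mathrm{can}$ with $\mathrm{var}$ (up to shift and Tate twist) and replaces $K$ by the intersection complex of $\cf^{\vee}$, carries the stalk formula to the costalk formula, so only one needs a direct proof.

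The main obstacle I expect is the assembly in the third paragraph: after both long exact sequences have collapsed one must check that the map there labelled $T-\Id$ genuinely is the composite $\mathrm{var}\circ\mathrm{can}$ of the two surviving maps, with no extra contribution from the remaining connecting homomorphisms. Concretely this is a diagram chase built on the octahedral axiom relating the two triangles above and identifying the square that glues the stalk and costalk sequences; it is precisely the content of \cite{gm morse}, Part II, \S6, and the only genuine work is transcribing it from the language of local Morse data into that of nearby cycles. A subsidiary point needing care is the genericity input itself --- that $p$ may be chosen so that $R\Phi_{\bar{\eta}}K$ is supported exactly at $x$ --- which rests on the transversality of a generic linear projection to a Whitney stratification refining $(X,U)$ and on the behaviour of the micro-support of $K$ under such slicing.
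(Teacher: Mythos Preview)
Your proposal is correct and follows essentially the same route as the paper: both arguments use the genericity of $p$ together with local acyclicity to force $R\Phi_{\bar\eta}K$ to be supported at $x$, invoke perversity of $R\Phi_{\bar\eta}K[-1]$ to concentrate it in degree $-1$, run the long exact sequences of the two fundamental triangles against the (co)support conditions of the IC sheaf, and assemble via the factorisation $T-\Id=\var\circ\can$. Your worry in the final paragraph is unnecessary: the identity $T-\Id=\var\circ\can$ holds functorially on $R\Psi_{\bar\eta}$ before passing to cohomology, so once $\can$ is surjective and $\var$ is injective in degree $-1$ (which you have already extracted from the two short exact sequences) the identification of $\Ker$, $\operatorname{Im}$, $\operatorname{Coker}$ is immediate --- the paper simply records this as the ``refined factorisation'' diagram, using that $R\Phi_{\bar\eta}K=i_{x,*}i_x^{!}R\Phi_{\bar\eta}K$, without any further diagram chase.
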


\begin{proof}

Let $\{X_{\alpha}\}_{\alpha\in \kaa}$ be a Whitney stratification of $X$ such that $j_{!*}\cf$ is constructible with respect to it. Working with a transversal slice to the stratum containing $x$, we can assume without loss of generality that $\{x\}$ is itself a stratum. 
By assumption, $p$ is a generic projection, we can assume that its fibers are transversal to the strata containing $0$ at the frontier. In particular, the restriction of $p$ to each such stratum is smooth. The local acyclicity of the smooth morphisms (cf. \cite{sga4.5} [Arcata]) then implies that $R\Phi_{\bar{\eta}}(j_{!*}\cf[d])$ is supported only at $x$. 
As the vanishing cycle $R\Phi_{\bar{\eta}}(j_{!*}\cf[d])[-1]$ is  perverse (cf. \cite{illusie monodromie} [Appendice]),
this implies that
$$
\ch^{m}\big(i_{x}^{*}R\Phi_{\bar{\eta}}(j_{!*}\cf[d])[-1]\big)=0\text{ for }m> 0 \quad\text{and}
\quad 
\ch^{m}\big(i_{x}^{!}R\Phi_{\bar{\eta}}(j_{!*}\cf[d])[-1]\big)=0\text{ for }m< 0.
$$
Moreover, as $R\Phi_{\bar{\eta}}(j_{!*}\cf[d])$ is supported only at $x$, we have the extra equality 
$$
i_{x}^{*}R\Phi_{\bar{\eta}}(j_{!*}\cf[d])[-1]=i_{x}^{!}R\Phi_{\bar{\eta}}(j_{!*}\cf[d])[-1].
$$ 
Hence
\begin{equation}\label{vanishing vanish}
\ch^{m}\big(i_{x}^{*}R\Phi_{\bar{\eta}}(j_{!*}\cf[d])[-1]\big)=0\text{ for }m\neq 0.
\end{equation}
Let $i_{X_{0}}:X_{0}\hookrightarrow X$ be the natural inclusion, recall that we have a distinguished triangle
\begin{equation}\label{nearby vanishing}
{i}_{X_{0}}^{*}\to R\Psi_{\bar{\eta}}\xrightarrow{\can} R\Phi_{\bar{\eta}}\xrightarrow{+1}.
\end{equation}
We deduce from it a long exact sequence
\begin{equation}\label{nearby long derived}
\cdots\to \ch^{m}(i_{x}^{*}j_{!*}\cf[d])\to R^{m}\Psi_{\bar{\eta}}(j_{!*}\cf[d])_{x}\xrightarrow{\rm can}
R^{m}\Phi_{\bar{\eta}}(j_{!*}\cf[d])_{x}\to \ch^{m+1}(i_{x}^{*}j_{!*}\cf[d])\to \cdots.
\end{equation}
The vanishing result (\ref{vanishing vanish}), together with the support condition $\ch^{m}(i_{x}^{*}j_{!*}\cf[d])=0$ for $m\ge 0$, implies that 
$$
\ch^{m}((j_{!*}\cf[d])_{x})=\begin{cases}
{\rm Ker}\big\{R^{-1}\Psi_{\bar{\eta}}(j_{!*}\cf[d])_{x}\xrightarrow{\rm can}
R^{-1}\Phi_{\bar{\eta}}(j_{!*}\cf[d])_{x}\big\},&\text{ for }m=-1,\\
R^{m}\Psi_{\bar{\eta}}(j_{!*}\cf[d])_{x},& \text{ for }-d\le m\le -2,\\
0,&\text{otherwise}.
\end{cases}
$$
Take the Poincar\'e-Verdier dual of the distinguished triangle (\ref{nearby vanishing}), we get another distinguished triangle
\begin{equation}\label{vanishing nearby}
{i}_{X_{0}}^{!}[1]\to R\Phi_{\bar{\eta}}\xrightarrow{\var} R\Psi_{\bar{\eta}}\xrightarrow{+1}.
\end{equation}
For $K=j_{!*}\cf[d]$, similar vanishing results as above imply that 
\begin{equation}\label{vanishing inj}
0\to R^{-1}\Phi_{\bar{\eta}}(j_{!*}\cf[d])_{x}\xrightarrow{\rm var}
i_{x}^{!}R^{-1}\Psi_{\bar{\eta}}(j_{!*}\cf[d])\to \ch^{1}(i_{x}^{!}j_{!*}\cf[d])\to 0.
\end{equation}
Note that we have factorization $T-\Id=\var\circ \can$. Moreover, as the vanishing cycle is supported only at $x$, the factorization can be refined to
\begin{equation}\label{monodromy factorize refined}
\begin{tikzcd}
R\Psi_{\bar{\eta}}(K)\arrow[d, "T-\Id"']\arrow[r, "\can"]& i_{x,*} i_{x}^{*}R\Phi_{\bar{\eta}}(K)\arrow[d,"\var"]\\
R\Psi_{\bar{\eta}}(K) & i_{x,{*}} i_{x}^{!}R\Psi_{\bar{\eta}}(K) \arrow[l].
\end{tikzcd}
\end{equation}
With the injectivity at the left of the equation (\ref{vanishing inj}), we get
$$
{\rm Ker}\big\{R^{-1}\Psi_{\bar{\eta}}(j_{!*}\cf[d])_{x}\xrightarrow{\rm can}
R^{-1}\Phi_{\bar{\eta}}(j_{!*}\cf[d])_{x}\big\}
=
{\rm Ker}\big\{i_{x}^{*}R^{-1}\Psi_{\bar{\eta}}(j_{!*}\cf[d])\xrightarrow{T-\Id} i_{x}^{!}R^{-1}\Psi_{\bar{\eta}}(j_{!*}\cf[d])\big\}.
$$
This finishes the proof of the first assertion. The second assertion follows by Poincar\'e duality.

For the third assertion, the vanishing result (\ref{vanishing vanish}) states that
$$
R^{m}\Phi_{\bar{\eta}}(j_{!*}\cf[d])_{x}=0,\quad \text{for }m\neq -1.
$$ 
The long exact sequence (\ref{nearby long derived}) and the property of $j_{!*}\cf[d]$ imply the surjection
$$
R^{-1}\Psi_{\bar{\eta}}(j_{!*}\cf[d])_{x}\xrightarrow{\rm can}
R^{-1}\Phi_{\bar{\eta}}(j_{!*}\cf[d])_{x}\to 0.
$$
The assertion about $R^{-1}\Phi_{\bar{\eta}}(j_{!*}\cf[d])_{x}$ then follows from it and the factorization (\ref{monodromy factorize refined}), together with the injectivity at the left of the equation (\ref{vanishing inj}).

\end{proof}

\paragraph{(2) Iterated fibrations of the nearby fiber}

We proceed to analyze the nearby cycle in theorem \ref{ic via nearby}. Topologically, there exist sufficiently small positive real numbers $\ep_{0}$ and $\delta_{0}$, such that for any $0<\ep<\ep_{0}$ and $0<\delta<\delta_{0}$ we have
\begin{align}
&i_{x}^{*}R^{m}\Psi_{\bar{\eta}}(j_{!*}\cf)=H^{m}(X\cap B_{0}(\ep)\cap p^{-1}(t), j_{!*}\cf), \quad \forall\, 0\neq t\in D_{0}(\delta), \label{nearby to nearby}
\\
&i_{x}^{!}R^{n}\Psi_{\bar{\eta}}(j_{!*}\cf)=H_{c}^{n}(X\cap B_{0}(\ep)\cap p^{-1}(t), j_{!*}\cf), \quad \forall\, 0\neq t\in D_{0}(\delta), \label{nearby to nearby c}
\end{align}  
where $B_{0}(\ep)$ is the open ball of radius $\ep$ with center $0$ in $\bc^{N}$ and $D_{0}(\delta)$ is the open disk of radius $\delta$ with center $0$ in $\bc$. 
The cohomologies at the right hand side of both (\ref{nearby to nearby}) and (\ref{nearby to nearby c}) can be calculated by iterated fiberations. 
Note that the restriction of $j_{!*}\cf$ to the nearby fiber $X\cap B_{0}(\ep)\cap p^{-1}(t)$ remains an intersection complex, as $p^{-1}(t)$ intersects transversally with the Whitney stratification $\{X_{\alpha}\}_{\alpha\in \kaa}$ inside $X\cap B_{0}(\ep)$.

As our analysis concerns the restriction of $j_{!*}\cf$ to $X\cap B_{0}(\ep)$, there is no loss of generality to assume that the cardinal of the strata $\{X_{\alpha}\}_{\alpha\in \kaa}$ is finite. 
Applying proposition \ref{teissier local polar} and \ref{hm transversal} to the Zariski closure of each stratum $X_{\alpha}$, as they are of finite cardinal, we obtain the existence of a dense open subscheme $\Omega_{\{X_{\alpha}\}_{\alpha\in \kaa}}$ of the partial flag variety parametrizing the sequences of linear subspaces
$$
\{0\}\subset  H_{d}\subsetneq \cdots \subsetneq H_{2}\subsetneq H_{1}\subsetneq \bc^{N}, \quad \dim(H_{i})=N-i \text{ for all } 1\le i\le d,
$$
for which the conclusions of both propositions hold for all $\overline{X_{\alpha}}, \alpha\in \kaa$. 
Let $\pi_{i}:\bc^{N}\to \bc^{i}$ be the projection with kernel $H_{i}$. 
Then the critical locus of the restriction of $\pi_{i}$ to the germ $(X,x)$ is the local polar variety $P_{0}(X, H_{i})$, which is either empty or reduced of pure dimension $i-1$, and the restriction of $\pi_{i}$ to it is finite. 
Let $t_{i}\in \bc^{i}$ be a point in general position and sufficiently close to $0$, then $\pi_{i+1}:\bc^{N}\to \bc^{i+1}$ induces a projection 
$$p_{i+1}: X\cap B_{0}(\ep)\cap \pi_{i}^{-1}(t_{i})\to \bc^{1},$$ 
and the critical locus of $p_{i+1}$ is the finite subscheme $\pi_{i}^{-1}(t_{i})\cap P_{0}(X, H_{i+1})$.
As $P_{0}(X, H_{i+1})$ is a proper closed subscheme of $X$, we obtain that the critical points of $p_{i+1}$ doesn't lie on the boundary $X\cap \partial B_{0}(\ep)\cap \pi_{i}^{-1}(t_{i})$.
Similar results hold for $\overline{X_{\alpha}}$ in place of $X$.

Let $p_{1}:X\to \bc^{1}$ be the restriction of $\pi_{1}$ to $X$, it is clear that theorem \ref{ic via nearby} holds for it and we get the equations (\ref{nearby to nearby}) and (\ref{nearby to nearby c}) for $p=p_{1}$ and $t=t_{1}$.
The projection $p_{2}:X\cap B_{0}(\ep)\cap \pi_{1}^{-1}(t_{1})\to \bc^{1}$ makes the first fiberation of the nearby fiber. 
It has critical points at the intersection $\pi_{1}^{-1}(t_{1})\cap P_{0}(X,H_{2})$, away from the boundary, hence both $Rp_{2,*}j_{!*}\cf$ and $Rp_{2,!}j_{!*}\cf$ are locally constant away from the images of the critical points. The fiber of $Rp_{2,*}j_{!*}\cf$ over a generic point of ${\rm Im}(p_{2})$ can be identified with $$H^{*}(X\cap B_{0}(\ep)\cap \pi_{2}^{-1}(t_{2}), j_{!*}\cf)$$ for some point $t_{2}$ in general position and sufficiently closed to $0$, 
and $H^{*}(X\cap B_{0}(\ep)\cap \pi_{1}^{-1}(t_{1}), j_{!*}\cf)$ can be built up from it and the vanishing cycles of the sheaf $j_{!*}\cf$ for the morphism $p_{2}$ at the critical points $\pi_{1}^{-1}(t_{1})\cap P_{0}(X,H_{2})$.
Similar results hold for $H_{c}^{*}(X\cap B_{0}(\ep)\cap \pi_{1}^{-1}(t_{1}), j_{!*}\cf)$.

To calculate $H^{*}(X\cap B_{0}(\ep)\cap \pi_{2}^{-1}(t_{2}), j_{!*}\cf)$ and $H_{c}^{*}(X\cap B_{0}(\ep)\cap \pi_{2}^{-1}(t_{2}), j_{!*}\cf)$, we make the second fiberation $p_{3}:X\cap B_{0}(\ep)\cap \pi_{2}^{-1}(t_{2})\to \bc^{1}$.  
Again, it has critical points at the intersection $\pi_{2}^{-1}(t_{2})\cap P_{0}(X,H_{3})$, away from the boundary, hence both $Rp_{3,*}j_{!*}\cf$ and $Rp_{3,!}j_{!*}\cf$ are locally constant away from the images of the critical points. The fiber of $Rp_{3,*}j_{!*}\cf$ over a generic point of ${\rm Im}(p_{3})$ can be identified with $H^{*}(X\cap B_{0}(\ep)\cap \pi_{3}^{-1}(t_{3}), j_{!*}\cf)$ for some point $t_{3}$ in general position and sufficiently closed to $0$, 
and $H^{*}(X\cap B_{0}(\ep)\cap \pi_{2}^{-1}(t_{2}), j_{!*}\cf)$ can be built up from it and the vanishing cycles of the sheaf $j_{!*}\cf$ for the morphism $p_{3}$ at the critical points $\pi_{2}^{-1}(t_{2})\cap P_{0}(X,H_{3})$. Similar results hold for  $H_{c}^{*}(X\cap B_{0}(\ep)\cap \pi_{2}^{-1}(t_{2}), j_{!*}\cf)$.

This process can be iterated. At the end, we need to calculate the vanishing cycle of the sheaf $j_{!*}\cf$ for the morphisms $p_{i+1}: X\cap B_{0}(\ep)\cap \pi_{i}^{-1}(t_{i})\to \bc^{1}$ at their critical points $\pi_{i}^{-1}(t_{i})\cap P_{0}(X, H_{i+1})$, and these datum can be assembled to get both $i_{x}^{*}R\Psi_{\bar{\eta}}j_{!*}\cf$ and $i_{x}^{!}R\Psi_{\bar{\eta}}j_{!*}\cf$.
This can be facilitated with the stratified Morse theory of Goresky and MacPherson \cite{gm morse}. Let $\varphi_{i+1}:X\cap B_{0}(\ep)\cap \pi_{i}^{-1}(t_{i})\to \br$ be a stratified Morse function which approximates sufficiently well the function ${\rm Re}(p_{i+1})$.  
Let $v_{i}\in \br$ be its smallest critical value, then 
$$
H^{*}\big(X\cap B_{0}(\ep)\cap \pi_{i}^{-1}(t_{i})\cap \varphi_{i+1}^{-1}(-\infty, v_{i}), j_{!*}\cf\big)=H^{*}\big(X\cap B_{0}(\ep)\cap \pi_{i+1}^{-1}(t_{i+1}), j_{!*}\cf\big),
$$ 
and $H^{*}\big(X\cap B_{0}(\ep)\cap \pi_{i}^{-1}(t_{i}), j_{!*}\cf\big)$ is built up from it and the Morse groups of $j_{!*}\cf$ for the function $\varphi_{i+1}$. 
Similar results hold for $H_{c}^{*}\big(X\cap B_{0}(\ep)\cap \pi_{i}^{-1}(t_{i}), j_{!*}\cf\big)$.
Note that as both $\pi_{i}$ and $t_{i}$ are chosen generically, $\pi_{i}^{-1}(t_{i})$ intersects transversally with the Whitney stratification $\{X_{\alpha}\}_{\alpha\in \kaa}$ inside $X\cap B_{0}(\ep)$, hence the restriction of $j_{!*}\cf$ to $X\cap B_{0}(\ep)\cap \pi_{i}^{-1}(t_{i})$ remains an intersection complex.
This makes the calculation of the Morse groups of $j_{!*}\cf$ for the function $\varphi_{i+1}$ much more easier than that of the vanishing cycles of $j_{!*}\cf$ for the projection $p_{i+1}$ (cf. \cite{gm morse}, Part II, Chap. 6).

Note that the local polar varieties $P_{0}(X,H_{i})$ plays a fundametal role in the above analysis, this may help understanding the condition $(1)$ in theorem \ref{whitney criteria}.

\bigskip
\small
\noindent
\begin{tabular}{ll}
&Zongbin {\sc Chen} \\ 
\\
&School of mathematics, Shandong University\\
&JiNan, 250100,\\
&Shandong, China \\
&email: {\tt zongbin.chen@email.sdu.edu.cn}

\end{tabular}

\end{document}